\newcommand{\Hil}{\mathcal{H}}
\newcommand{\Wcal}{\mathcal{W}}
\newcommand{\Z}{\mathbb{Z}}
\newcommand{\N}{\mathbb{N}}
\newcommand{\R}{\mathbb{R}}
\newcommand{\psihat}{\widehat{\psi}}
\newcommand{\psitilde}{\widetilde{\psi}}
\newcommand{\thetatilde}{\widetilde{\theta}}
\newcommand{\phihat}{\widehat{\phi}}
\newcommand{\Fcal}{\mathcal{F}}
\newcommand{\fa}{, \quad \text{ for all }}
\newcommand{\eps}{\varepsilon}
\newcommand{\Lp}[2]{L^{#1}{#2}}
\newcommand{\bfTPs}{\mathbf{T_{\Phi_s}}}
\newcommand{\bfTPw}{\mathbf{T_{\Phi_w}}}
\newcommand{\bfTP}{\mathbf{T_{\Phi}}}
\newcommand{\bfTPA}{\mathbf{T_{\Phi}^*}}
\newcommand{\bdWaveS}{\mathcal{W}} %% boundary wavelet system
\def\diag{{\text{\rm diag}}}
\newtheorem{theorem}{Theorem}[section]
\newtheorem{assumptions}{Assumptions}[section]
\newtheorem{definition}[theorem]{Definition}
\newtheorem{proposition}[theorem]{Proposition}
\newtheorem{lemma}[theorem]{Lemma}
\DeclareMathOperator{\suppp}{supp \,}
\newcommand{\ExtOp}{\tilde{E}}
\newcommand{\ExtOpMod}{E}
\definecolor{ggreen}{rgb}{0,0.5,0.5}
\newcommand*{\changed}[1]{{\color{black}#1}}
\title{Anisotropic Multiscale Systems on Bounded Domains}
\author{Philipp Grohs \and Gitta Kutyniok \and Jackie Ma \and Philipp Petersen \and Mones Raslan}
\date{\today}
\begin{document}

\allowdisplaybreaks
\maketitle

%\listoftodos
\begin{abstract}
We provide a construction of multiscale systems on a bounded domain $\Omega \subset \R^2$ coined boundary
shearlet systems, which satisfy several properties advantageous for applications to imaging science and the numerical analysis
of partial differential equations. More precisely, we construct boundary shearlet systems that form frames for the Sobolev spaces $H^s(\Omega),s\in\N \cup \{0\},$ with controllable
frame bounds and admit optimally sparse approximations for functions, which are smooth apart from a curve-like discontinuity. We show that the constructed systems
allow to incorporate boundary conditions. Furthermore, for $s \geq 0$ and $f\in H^s(\Omega)$ we prove that weighted $\ell^2$ norms of the $L^2-$analysis coefficients of $f$ are equivalent to its $H^s(\Omega)$ norm. This yields in particular, that the reweighted systems are frames also for $H^{-s}(\Omega)$. Moreover, we demonstrate numerically, that the associated $L^2-$synthesis operator is also stable as a map to $H^s(\Omega)$ which, in combination with the previous result, strongly indicates that these systems constitute so-called Gelfand frames for $(H^s(\Omega), L^2(\Omega), H^{-s}(\Omega))$.
\end{abstract}
%------------------------------------------------------------------------------------------------------------------------------

%-------------------------------------
%----------------------------------------------------------------------------------------
\section{Introduction}
%------------------------------------------------------------------------------------------------------------------------------

In the past two decades, there has been extensive work related to the design of novel representation systems for functions with the intent to build systems which are optimally suited for sparse approximation of various signal
classes. The first breakthrough was the introduction of the multiscale system of wavelets \cite{TenLectures}
providing optimally sparse approximations of functions governed by point singularities while allowing a
unified concept of the continuum and digital realm to enable faithful implementations. Indeed, wavelets
have nowadays become a standard tool in, e.g., imaging science and the numerical analysis of partial
differential equations.

However, many \emph{multivariate functions} such as images or the solutions of various types of partial differential equations are governed by singularities along hypersurfaces. Additionally, it turned out that wavelets do not yield optimal approximation rates for these classes of functions \cite{DCartoonLikeImages2001}. Recently, significant progress has been achieved by the introduction of anisotropic multiscale systems such as ridgelets \cite{candes1998ridgelets}, curvelets \cite{CurveletsIntro}, and shearlets \cite{LLKW2007}, which are capable of optimally approximating certain classes of multivariate functions with singularities along hypersurfaces. One main drawback so far is the fact that all these systems are designed for $L^2(\R^2)$, whereas applications, in particular, adaptive
schemes for partial differential equations, typically require systems defined on a bounded domain $\Omega \subset \R^2$, where these systems need to constitute frames for Sobolev spaces $H^s(\Omega)$, $s\in \N\cup \{0\}$. 

In this paper, we will introduce the first class of anisotropic multiscale systems defined on very general domains $\Omega \subset \R^2$ which still exhibit optimally sparse approximation properties for functions with hypersurface singularities. Moreover, further
properties specifically necessary for their application in numerical approximation of PDEs such as the frame property for $H^s(\Omega)$, $s\in \N$ will be demonstrated. These systems are
constructed as hybrid systems combining wavelets and shearlets.

%-------------------------------------
%----------------------------------------------------------------------------------------
\subsection{Anisotropic multiscale systems}
%------------------------------------------------------------------------------------------------------------------------------

Curvelets were the first generalization of wavelets that achieve optimal approximation rates for functions with curvilinear singularities. Specifically, they optimally approximate so-called cartoon-like functions \cite{DCartoonLikeImages2001}, which are compactly supported piecewise $C^2$-functions defined on $\mathbb{R}^2$ with a $C^2$ discontinuity curve. 

In fact, curvelets are capable of approximating such functions with a decay rate of the $L^2$-error of the best $N$-term
approximation by $N^{-1}$ up to logarithmic terms, whereas wavelets can only achieve a rate of $N^{-1/2}$.
This result from 2002 \cite{CurveletsIntro} can be considered a milestone in the area of applied harmonic analysis.

However, curvelets have the drawback of being based on rotations to provide directional sensitivity, which
prevents a unified treatment of the continuum and digital world. Hence, faithful implementations are not
available. Shearlets were introduced in 2006 to resolve this problem, leading to a class of systems which
also provide optimally sparse approximations of cartoon-like functions \cite{KLcmptShearSparse2011}---even for higher order derivatives \cite{Pet2015}---while allowing faithful implementations \cite{shearLab}.
Moreover, this concept allows a shearlet {\it frame} for $L^2(\R^2)$ with controllable frame bounds, consisting
of compactly supported elements \cite{KGLConstrCmptShear2012}. Let us stress that the notion of a frame allows for stable decomposition and reconstruction formulas,  see \cite{christensen2003introduction}.

%-------------------------------------
%----------------------------------------------------------------------------------------
\subsection{The problem of bounded domains}
%------------------------------------------------------------------------------------------------------------------------------

Anisotropic multiscale systems are today extensively used in imaging science for tasks such as feature extraction
or inpainting, see, e.g., \cite{ESQD05,DK13,GK14}. Despite these successes, the use of these novel representation
systems for the numerical approximation of PDEs is still in its infancy---even though the solutions of a
large class of PDEs do admit hypersurface singularities. As examples, we mention elliptic PDEs with discontinuous
(or distributional) source terms or coefficients, boundary value problems on polygonal domains, or transport equations.

The main bottleneck in developing PDE solvers based on anisotropic multiscale systems is the fact that initially
these systems are constructed as representation systems, or frames, for functions defined on $\mathbb{R}^d$, while most PDEs are defined on a bounded domain $\Omega\subset \mathbb{R}^d$. Thus the
development of efficient PDE solvers crucially depends on the construction of anisotropic systems on bounded domains,
satisfying various boundary conditions. The attentive reader will have realized that also images are
defined on bounded domains. The fact that this issue did not cause a significant problem so far indicates that
the handling of the boundary for imaging tasks is not that sensitive. However, also in this range of applications, the
fact that the data lives on a bounded domain must be taken into consideration, especially in the theoretical (continuum) analysis. We expect that the usage of boundary-adapted systems may help to reduce boundary artifacts.

Even for wavelet systems, the adaptation to general bounded domains is a challenging problem which is by now, after
decades of research, quite well understood (see, for instance, \cite{harbrecht2006wavelets},\cite{CDDBoundaryWavelets}). Nonetheless, several open questions remain. The construction of anisotropic multiscale systems on bounded domains
is even much more challenging. While the MRA structure of wavelet systems \cite{TenLectures} yields a powerful tool
for the construction of boundary wavelet frames, no such structure seems to be available for more general multiscale
systems. 
In fact, for anisotropically scaled and directed systems one can imagine that the anisotropically shaped support can intersect the boundary to various degrees and at various angles requiring a careful adjustment of each single element. 

%-------------------------------------
%----------------------------------------------------------------------------------------
\subsection{Adaptive schemes and frames}\label{sec:adaptframe}
%-----------------------------------------------------------------------------------------------------------------------------

Several important steps have already been made in the direction of utilizing representation systems from applied harmonic
analysis for adaptive solvers for different types of partial differential equations. A breakthrough was achieved by Cohen, Dahmen, and DeVore in the seminal paper \cite{cohen2001adaptive} by introducing a provably optimal adaptive wavelet-based solver for elliptic PDEs with solutions exhibiting only point singularities. Several further steps could be achieved in the realm of elliptic PDEs, for example, extending this concept to more flexible frames instead of orthonormal bases, see, e.g., \cite{stevenson2003adaptive,dahlke2007adaptive, DahRWFS2007AdaStepDec}.

The exploitation of anisotropic frame systems for the numerical solution of PDEs is a rather new topic of research, 
presumably due to the previously described difficulty of adapting those systems to bounded domains.
We recall the key properties an anisotropic frame system needs to satisfy to be suitable for applications in the discretization of PDEs.

\begin{enumerate}[(P1)]
\item {\it Boundary conditions.} The system should allow to impose boundary conditions.
\item {\it Quadrature.} The elements of the system should be spatially localized and ideally \changed{piecewise polynomial with low polynomial order} to allow for efficient quadrature.
\item {\it Frame properties.} The constructed system should form a frame for $H^s(\Omega)$, where $\Omega$ is the
bounded domain and $s\in \N \cup \{0\}$. 
\item {\it Characterization of Sobolev spaces.} The analysis and synthesis operator of the system should yield stable maps between 
weighted $\ell^2$ spaces and Sobolev spaces. In particular, such a property is achieved when the system forms a 
frame for $H^s(\Omega)$ or it forms a so-called Gelfand frame for Sobolev spaces. The discretization of an 
elliptic PDE using a Gelfand frame for a Sobolev space yields, after a simple diagonal preconditioning, a uniformly 
well-conditioned linear system which can then be solved numerically by iterative methods such as damped Richardson 
iteration or conjugate gradients \cite{dahlke2007adaptive}.  
\item {\it Optimal sparse approximation.} It is crucial that the system provides optimally sparse approximation 
properties of the solutions to ensure that those can be approximated at an optimal asymptotic ratio between 
computational work and accuracy. Ideally, this should include even functions with discontinuities ending at the 
boundary. 
\item {\it Compressibility.}  For this approximation rate to be realized by a numerical solver, it is 
furthermore needed that the resulting discretization matrix is compressible in the sense of \cite{cohen2001adaptive}.
\end{enumerate}
Notable first steps towards anisotropic frame systems for the numerical solution of PDEs have already been
taken in \cite{EGO14_1305,GO14_1334}, where optimal adaptive ridgelet-based solvers are constructed for linear advection
equations and \cite{dahmen2014efficient,dahmen2012adaptive,DKLSW15}, where a shearlet-based construction is used to 
solve general advection equations. Also related is the work \cite{demanet2009wave,candes2004curvelet}, where frames 
of wave atoms, respectively curvelets, are used for the efficient representation and computation of wave propagators. 
Despite these successes, none of the aforementioned works successfully addressed the essential issue of problem 
formulations on finite domains with non-periodic boundary conditions.

%-------------------------------------
%----------------------------------------------------------------------------------------
\subsection{Our contribution}
%------------------------------------------------------------------------------------------------------------------------------

In this paper, we present a construction of an anisotropic multiscale system on bounded domains $\Omega \subset \mathbb{R}^2$, 
which satisfies most of the aforementioned desired properties ((P1)--(P6)). We regard this as a significant step
towards utilizing anisotropic frame systems to derive provably optimal solvers for the numerical solution of appropriate types of PDEs. The novel system coined \emph{boundary shearlet system} is a hybrid system, consisting of shearlet elements to provide the
optimal approximation rate for anisotropic structures and wavelet elements for handling the boundary. More precisely,
we start with a compactly supported shearlet frame for $L^2(\mathbb{R}^2)$ as constructed, for instance, in
\cite{KGLConstrCmptShear2012}. From this frame, we only choose the elements with support fully contained
in $\Omega$. Since this is by far not a complete system for $L^2(\mathbb{R}^2)$ and certainly cannot handle
boundary conditions, we augment it by a carefully chosen selection of boundary-adapted wavelets as constructed, for instance, in \cite{CohDauVial}. We succeed in establishing the following properties from ((P1)--(P6)):

\begin{enumerate}[(P1)]
\item {\it Boundary conditions.} The usage of boundary wavelets will ensure that any boundary condition can be imposed.
\item {\it Quadrature.} The elements of such a boundary shearlet system are by construction spatially localized and can be chosen to be piece-wise polynomial.
\item {\it Frame properties.} The constructed boundary shearlet system forms a frame for $L^2(\Omega)$, where 
$\Omega$ is the bounded domain, due to the careful selection of elements from a shearlet system and a boundary
wavelet system. Additionally, we will demonstrate that for $s \in \N$ one obtains a frame for $H^s(\Omega)$ after proper reweighting of the frame's elements. This will be established in Theorem \ref{thm:FrameProperty}.
\item {\it Characterization of Sobolev spaces.}  Theorem \ref{thm:AnalysisCharOfHOmega} shows that our boundary shearlet system is
capable of characterizing the norm of Sobolev spaces, i.e., if $(\varphi_n)_{n\in \N}$ denotes the shearlet frame 
on $\Omega$ then for all $f\in H^s(\Omega)$ we have that
$
\|f\|_{H^s{(\Omega)}} \sim \| (w_n \langle f, \varphi_n\rangle)_{n\in \N} \|_2,
$
for some sequence of weights $(w_n)_{n\in \N}$ depending only on $\changed{s \geq 0}$. This implies one condition necessary for deriving a Gelfand frame for the Gelfand triple $(H^{s}(\Omega),L^2(\Omega),H^{-s}(\Omega))$. The 
other condition will be shown numerically since it requires the dual frame, which is presently not available in 
analytic form. This gives strong evidence that the boundary shearlet system indeed constitutes a Gelfand frame 
which is sufficient for applications in the numerical solution of elliptic PDEs \cite{dahlke2007adaptive}.
The characterization of 
Sobolev spaces also implies that our system constitutes a frame for the Hilbert space $H^{-s}(\Omega)$ which might
be relevant for the potential numerical solution of boundary integral equations \cite{sauter2010boundary}. 
\item {\it Optimal sparse approximation.} Theorem \ref{thm:OptSpaApproxOnBdDom} proves that a boundary shearlet system
provides optimally sparse approximation of an even extended class of cartoon-like functions, which also include 
singularity curves traversing the boundary. 
\end{enumerate}

Since the problem of compressibility is associated with a specific PDE problem and is known to be very involved, we believe it goes far beyond the scope of this paper. Property (P6) will be a subject of future work. In summary, except for compressibility of the discretization matrix, our contribution provides a fully satisfactory answer to the desired properties ((P1)--(P5)).

%------------------------------------------------------------------------------------------------------------------------------
\subsection{Expected impact}
%------------------------------------------------------------------------------------------------------------------------------
We anticipate our results to have the following impacts:
\begin{itemize}
\item {\em Numerical solution of partial differential equations.}
Our work represents a step in a larger research program, which is to construct and apply specifically designed frames for the numerical solution of partial differential equations. 
The construction of this paper is expected to lead to the first adaptive algorithm for elliptic PDEs with solutions possessing singularities along hypersurfaces which can be executed in optimal complexity. Nonetheless, we wish to emphasize that,
while the results concerning the construction of directional frames presented in this paper represent a significant advance there still exist open problems already in this realm that need to be addressed in the future. 

First, the compressibility in the sense of \cite{cohen2001adaptive} of the matrix representation of elliptic PDEs in our
constructed system needs to be studied. Second, our results show that Sobolev norms can be characterized by weighted
$\ell^2$ norms on the transform coefficients. The construction of optimal adaptive PDE solvers in the spirit of \cite{cohen2001adaptive,dahlke2007adaptive} requires slightly more, namely that the representation system constitutes a Gelfand frame
\cite{dahlke2007adaptive}. 
In Section \ref{sec:numerics} we are so far only able to verify these properties numerically. 
Third, also on the practical side several issues remain such as the development of efficient quadrature rules for the representation of elliptic operators, say, in our representation system.
\item {\em Imaging sciences.} Images are naturally supported on a rectangular domain. A common approach so far was to theoretically
analyze algorithmic procedures from applied harmonic analysis for, for instance, denoising, inpainting, or feature extraction, see \cite{ESQD05,DK13,GK14}, in $L^2(\R^2)$ while disregarding any boundary issues. Consequently, also their digitization
was not particularly designed to handle a bounded domain. With the construction of multiscale systems on bounded domains, we now open the door to a unified concept of the continuum and digital realm for data on bounded domains. 
\item {\em Hybrid systems and sparse approximation.}
While there already exists work on hybrid constructions using systems
from applied harmonic analysis \cite{DK13, EasLN2013Denoising},
approximation properties of hybrid systems have not been studied
before. Thus, one can see our construction as one first step in a
line of research introducing more flexible systems by introducing hybridization of well-studied systems in a way such that special properties---in our case sparse approximation properties combined with boundary
adaption---are maintained.
\end{itemize}

%------------------------------------------------------------------------------------------------------------------------------
\subsection{Outline}
%------------------------------------------------------------------------------------------------------------------------------

The paper is organized as follows. Section \ref{sec:review} is devoted to discussing the main definitions and results for
boundary-adapted wavelet systems and shearlet systems. In the same section, we also state typical assumptions imposed on boundary wavelet and shearlet systems, which will play a crucial role later. The precise definition
of multiscale anisotropic directional systems on bounded domains, coined boundary shearlet systems, and the analysis
of their frame properties for $L^2(\Omega)$ and $H^s(\Omega)$, $s\in \N$, can be found in Section \ref{subsec:NewConstruction}. In Section \ref{sec:caracH}, we show how
to construct shearlet systems that characterize $H^s(\R^2)$ by their analysis coefficients and how this gives rise to
shearlet systems on bounded domains that characterize $H^s(\Omega)$. We further show partial results of the Gelfand frame property in the same section. In Section
\ref{sec:ApproxPropOmega}, it is demonstrated that the newly constructed systems provide optimal $L^2-$approximation rates for a more
general class of cartoon-like functions than in previous literature considered. Finally, in Section \ref{sec:numerics} we complement the theoretical analysis of the Gelfand frame property of Section \ref{sec:caracH} by a numerical analysis of the Gelfand frame property and verify the stability and compressibility properties of our system numerically.

%%%%%%%%%%%%%%%%%%%%%%%%%%%%%%%%%%%%%%%%%%%%%%%%%%%%%%%%%%%%%%%%%%%%%%%%%%%%%%%%
%%%%%%%%%%%%%%%%%%%%%%%%%%%%%%%%%%%%%%%%%%%%%%%%%%%%%%%%%%%%%%%%%%%%%%%%%%%%%%%%
%%%%%%%%%%%%%%%%%%%%%%%%%%%%%%%%%%%%%%%%%%%%%%%%%%%%%%%%%%%%%%%%%%%%%%%%%%%%%%%%
%%%%%%%%%%%%%%%%%%%%%%%%%%%%%%%%%%%%%%%%%%%%%%%%%%%%%%%%%%%%%%%%%%%%%%%%%%%%%%%%
%%%%%%%%%%%%%%%%%%%%%%%%%%%%%%%%%%%%%%%%%%%%%%%%%%%%%%%%%%%%%%%%%%%%%%%%%%%%%%%%
%%%%%%%%%%%%%%%%%%%%%%%%%%%%%%%%%%%%%%%%%%%%%%%%%%%%%%%%%%%%%%%%%%%%%%%%%%%%%%%%

%------------------------------------------------------------------------------------------------------------------------------
\section{Review of wavelet and shearlet systems} \label{sec:review}
%------------------------------------------------------------------------------------------------------------------------------

Since wavelets on bounded domains and shearlets will be the key ingredient in the following construction of boundary shearlets systems, this section should serve as a review of their main definitions and properties. Before that, we briefly recall frames and Riesz bases for Hilbert spaces, which are generalizations of orthonormal bases.

%------------------------------------------------------------------------------------------------------------------------------
\subsection{Frames and Riesz bases}\label{sec:FramesRiesz}
%------------------------------------------------------------------------------------------------------------------------------

Many representation systems for $L^2(\R^2)$ which occur within the framework of harmonic analysis do not yield orthonormal bases, since they are redundant. Nonetheless, they still exhibit stability
properties in the sense of constituting a frame for $L^2(\R^2)$ with controllable frame bounds. Recall that a family of elements
$(\varphi_n)_{n\in \N}$ in a Hilbert space
$\mathcal{H}$ forms a {\em frame} for $\mathcal{H}$, if there exist $0<A\leq B<\infty$ such that
$$
A\|f\|_{\mathcal{H}}^2 \leq \sum_{n\in \N} | \langle \varphi_n, f\rangle_{\mathcal{H}} |^2 \leq B\|f\|^2_{\mathcal{H}} \fa f\in \mathcal{H}.
$$
If only the second inequality holds, then the system $(\varphi_n)_{n\in \N}$ is called a \emph{Bessel sequence}. Associated to every
Bessel sequence $(\varphi_n)_{n\in \N}$ is the \emph{analysis operator} $T$ given by
\[
T : \mathcal{H} \to \ell^2(\N), \quad f \mapsto (\langle f, \varphi_n\rangle_{\mathcal{H}})_{n\in \N}.
\]
The inner products $(\langle f, \varphi_n\rangle_\mathcal{H})_{n \in \N}$ are termed {\it analysis coefficients}, in contrast to the coefficients of an expansion of $f$ in the system $(\varphi_n)_{n \in \N}$ which are referred to as {\it synthesis coefficients}.
If $(\varphi_n)_{n\in \N}$ constitutes a frame for $\mathcal{H}$, it can be shown that the {\em frame operator} $S := T^*T$ is
a boundedly invertible operator \cite{christensen2003introduction}. Aiming to reconstruct any $f$ from $Tf$, one
defines another frame $(\varphi_n^d)_{n\in \N}$, the so-called \emph{canonical dual frame} of $(\varphi_n)_{n\in \N}$, by
\[
\varphi_n^d: = S^{-1}\varphi_n.
\]
This leads to the formulae
\begin{align*}
f = \sum_{n\in \N} \langle f, \varphi_n\rangle_\mathcal{H} \varphi_n^d = \sum_{n\in \N} \langle f, \varphi_n^d\rangle_\mathcal{H} \varphi_n \fa f\in \mathcal{H},
\end{align*}
of which the first part is the desired reconstruction formula, and the second can be regarded as an expansion of $f$ in terms of the frame
$(\varphi_n)_{n\in \N}$. If $(\varphi_n)_{n\in\N}$ fulfills the additional property that for all finitely supported, scalar-valued sequences $(c_n)_{n\in\N}$ there holds 
\[\|(c_n)_{n\in\N}\|_{\ell^2}^2\sim \|\sum_{n\in\N}c_n \varphi_n\|_{\Hil}^2,\] it is called \emph{Riesz basis} for $\Hil$ and its canonical dual frame $(\varphi_n^d)_{n\in\N}$ is also a Riesz basis. Furthermore, for all $n,n'\in \N$ there holds 
\[\langle \varphi_n,\varphi_{n'}^d \rangle_{\Hil}=\delta_{n,n'},\] i.e., $(\varphi_n)_{n\in\N},(\varphi^d_{n})_{n\in\N}$ is a \emph{pair of biorthogonal Riesz bases} for $\Hil.$

%------------------------------------------------------------------------------------------------------------------------------
\subsection{Boundary-adapted wavelet systems}\label{sec:bdWavelets}
%------------------------------------------------------------------------------------------------------------------------------

%\subsubsection{Definition and Properties}\label{subsec:2DbdWavelets}

In the sequel we will work with wavelet systems on a bounded domain $\Omega \subset \R^2$. Later on we will assume a number of properties that the wavelet systems should have. For now we only stipulate that they should be indexed in a specific way. 
\begin{definition} \label{def:bws}
A set $\bdWaveS \subset L^2(\Omega)$ is called a \emph{boundary wavelet system,} if for some $J_0 \in \Z$ it can be written as
\begin{align*}
\bdWaveS = \{ \omega_{J_0, m, 0}: m \in K_{J_0}\} \cup \{ \omega_{j, m, \upsilon}: j \geq J_0, m \in K_{j}, \upsilon = {1,2,3}\},
\end{align*}
where $K_j\subset \Omega$, $\# K_j \sim 2^{2 j}$. We denote the index set by
$$
\Theta: =\{(J_0 ,m, 0): m\in K_j\} \cup \{(j,m, \upsilon): j\geq 0, m\in K_j ,\upsilon = {1,2,3}\}.
$$
Furthermore, we call $\Wcal,\Wcal^d$ a \emph{pair of biorthogonal wavelet Riesz bases} for $L^2(\Omega),$ if $\Wcal, \Wcal^d$ are boundary wavelet systems and $\Wcal,\Wcal^d$ is a pair of biorthogonal Riesz bases for $L^2(\Omega).$
\end{definition}
Explicit constructions of boundary wavelet systems that yield pairs of biorthogonal wavelet Riesz bases for $L^2(\Omega)$ can be found e.g. in \cite{CohDauVial, DahS1998,DahKU1996stokes,CanTU1999WaveElMeth,bitt2006biortho,primbs2006stabile}. If one wants to work with orthonormal wavelet bases, one could use orthonormal spline wavelets as in \cite{splinewave96,splinewave99,splinewave10}. Furthermore, conditions under which reweighted boundary wavelet systems form frames for $H^s(\Omega)$ have been examined for instance in \cite{DahWaveMultOP}. Additionally, conditions were established in \cite{DahWaveMultOP} such that wavelet systems characterize Sobolev spaces in the sense that 
\begin{align*}
C_{\mathrm{w}}\|f\|_{H^{s}(\Omega)}^2  \leq \sum_{(j,m,\upsilon)\in  \Theta} 2^{2js}|\left \langle f, \omega_{j,m,\upsilon}\right \rangle_{L^2(\Omega)} |^2 \leq D_{\mathrm{w}}\|f\|_{H^{s}(\Omega)}^2,
\end{align*}
for some $C_{\mathrm{w}},D_{\mathrm{w}}>0$ and for all $ f\in H^{s}(\Omega)$. This property will turn out to be crucial for the proof of the characterization of Sobolev spaces by boundary shearlet systems.

%------------------------------------------------------------------------------------------------------------------------------
\subsubsection{Assumptions on boundary wavelets}
%------------------------------------------------------------------------------------------------------------------------------
Now we introduce some admissibility conditions imposed on boundary wavelet systems which will be central to our later construction of boundary shearlets. %Therefore, we will call a system which fulfills every one of these properties an \emph{admissible boundary wavelet system}.
More precisely, we are interested in the following list of desiderata, a boundary wavelet system $\mathcal{W}$ as defined in
Definition \ref{def:bws} can satisfy on $\Omega$:

\begin{assumptions}\label{ass:Wave}
Let $s\geq 0$ and $\mathcal{W}$ be a boundary wavelet system.  
\begin{itemize}

\item[(W1)] There exists a boundary wavelet system system $\Wcal^d$ such that $\Wcal,\Wcal^d$ is a pair of biorthogonal Riesz bases for $L^2(\Omega).$ 

\item[(W2)] The wavelet system $\mathcal{W}$ characterizes $H^s(\Omega)$, i.e., there exists $0<C_{\mathrm{w}}\leq D_{\mathrm{w}}<\infty$ such that, for all $f\in H^{s}(\Omega)$,
\[
C_{\mathrm{w}}\|f\|_{H^{s}(\Omega)}^2  \leq \sum_{(j,m,\upsilon) \in \Theta} 2^{2js}|\left \langle f, \omega_{j,m,\upsilon}\right \rangle_{L^2(\Omega)} |^2 \leq D_{\mathrm{w}}\|f\|_{H^{s}(\Omega)}^2.
\]

\item[(W3)] $(2^{-js}\omega_{j,m,\upsilon})_{(j,m,\upsilon) \in \Theta}$ is a frame for $H^s(\Omega)$ and there exists a dual frame \\
$\mathcal{W}^d_{H^s}=(2^{-js}\omega_{j,m,\upsilon})^d_{(j,m,\upsilon) \in \Theta}$ and for all $(j,m,\upsilon)\in \Theta$ with ${\partial \Omega \cap \suppp (2^{-js}\omega_{j,m, \upsilon})^d = \emptyset}$ it holds that 
\begin{align*}
|\widehat{(2^{-js}\omega_{j,m, \upsilon}})^d(\xi)| \lesssim 2^{-js}\cdot 2^{-j}\frac{\min\{1,|2^{-j}\xi_i|^{\alpha_{\mathrm{w}}}\}}{\max\{1,|2^{-j}\xi_1|^{\beta_{\mathrm{w}}}\}\max\{1,|2^{-j}\xi_2|^{\beta_{\mathrm{w}}}\}}, 
\end{align*}

for at least one $i\in \{1,2\}$ some $\alpha_{\mathrm{w}},\beta_{\mathrm{w}}>0$ and all $\xi \in \R^2$.
Here the Fourier transform is to be understood on $\Lp{2}{(\R^2)}$ after extension by $0$ on $\R^2$. Furthermore, assume that the elements of $\bdWaveS^d_{H^s}$ have compact support and let 
$$
q_{\mathrm{w}}^{(0)} := \inf \{ q>0 : \suppp (2^{-js}\omega_{j,m, \upsilon})^d\subset B_{2^{-j}q}(m) \fa (j,m,\upsilon) \in \Theta\}>0.$$
\item[(W4)] The elements of $\bdWaveS$ have compact support and 
$$
q_{\mathrm{w}}^{(1)} := \inf \{q>0: \suppp \omega_{j,m, \upsilon}\subset B_{2^{-j}q}(m) \fa (j,m,\upsilon) \in \Theta\} >0.
$$
Moreover, we have that 
$$
|m - m'| \geq 2^{-j}q_{\mathrm{w}}^{(2)}\fa j\geq J_0 \text{ and }m,m' \in K_j,m \neq m',
$$
for some $q_{\mathrm{w}}^{(2)}>0.$
\end{itemize}
\end{assumptions}

Note that the decay condition of (W3) is always satisfied as soon as the generating wavelet is sufficiently smooth and possesses sufficiently many vanishing moments.
%In applications we will be interested in boundary wavelets which fulfill all of the aforementioned properties. %This leads to the following definition:

%\begin{definition}
%A boundary wavelet system $\mathcal{W}$ that admits properties (W1), (W2), (W3), (W4) and  with
% \changed{$s \geq 0$} and $\alpha_{\mathrm{w}}, \beta_{\mathrm{w}} > 0$ is called an \emph{$(s, \alpha_{\mathrm{w}}, \beta_{\mathrm{w}})$-admissible boundary wavelet system}.
%\end{definition}

%------------------------------------------------------------------------------------------------------------------------------
\subsection{Shearlets}\label{sec:shearlets}
%------------------------------------------------------------------------------------------------------------------------------

Shearlet systems are designed to provide optimally sparse approximations of a model class of functions which are governed by
curvilinear singularities while allowing a faithful implementation with their construction being based on the framework of
affine systems. They were first introduced by Guo, Labate, Lim, Weiss, and one of the authors in \cite{GKL2006,LLKW2007}. Since
we aim to use shearlets as interior elements for the construction of our anisotropic multiscale systems on bounded domains we briefly recall the definition and properties of (cone-adapted) shearlet systems below. 

%------------------------------------------------------------------------------------------------------------------------------
\subsubsection{Construction of (cone-adapted) shearlet systems}
%------------------------------------------------------------------------------------------------------------------------------

The construction of shearlet systems is based on \emph{anisotropic scaling} and \emph{shearing}. To state the precise definition,
for $j \in \N, k \in \Z$, we denote the \emph{anisotropic scaling matrices} $A_j$ and the \emph{shear matrices} $S_k$ by
\begin{align*}
    A_j := \diag(2^j,2^{\frac{j}{2}}) =  \begin{pmatrix} 2^j & 0 \\ 0 & 2^{\frac{j}{2}} \end{pmatrix}, \quad S_k := \begin{pmatrix} 1 & k \\ 0 & 1 \end{pmatrix}.
\end{align*}
Then a \emph{(cone-adapted) shearlet system} is defined as follows:

\begin{definition}[\cite{KGLConstrCmptShear2012}]\label{def:ShearletSystem}
Let $\phi, \psi, \widetilde{\psi} \in L^2(\R^2)$, $c= (c_1, c_2) \in \R^2$ with $c_1,c_2>0$. Then the \emph{(cone-adapted) shearlet system} is defined by
\[
 \mathcal{SH}(\phi, \psi, \widetilde{\psi}, c) = \Phi(\phi, c_1) \cup \Psi(\psi, c) \cup \tilde{\Psi}(\widetilde{\psi}, c),
\]
where
\begin{align*}
\Phi(\phi, c_1) &:= \left \{ \phi(\cdot - c_1 m) :m\in \Z^2 \right\},\\
\Psi(\psi, c) &:= \left\{ \psi_{j,k,m} = 2^{\frac{3j}{4}}\psi(S_k A_{j}\cdot -  M_c m ): j\in \N, |k| \leq \left\lceil2^{\frac{j}{2}}\right\rceil, m\in \Z^2 \right\},\\
\tilde{\Psi}(\widetilde{\psi}, c) &:= \left\{ \widetilde{\psi}_{j,k,m} = 2^{\frac{3j}{4}}\widetilde{\psi}(S_k^T \tilde{A}_{j}\cdot -  M_{\tilde{c}} m ): j\in \N, |k| \leq
\left\lceil2^{\frac{j}{2}}\right\rceil, m\in \Z^2 \right\},
\end{align*}
with $M_c:= \diag( c_1, c_2 )$, $M_{\tilde{c}} := \diag( c_2 , c_1 )$, and $\tilde{A}_{2^j} := \diag(2^{\frac{j}{2}},2^{j})$.
\end{definition}
We shorten the notation by defining
\begin{align*}
\psi_{j,k,m,\iota} := \left \{\begin{array}{l l}
\psi_{j,k,m}&  \text{ if }\iota = 1,\\
\phi(\cdot - c_1 m)  & \text{ if }\iota = 0, \\
\psitilde_{j,k,m}&  \text{ if }\iota = -1,\\
\end{array} \right.
\end{align*}
and denote the index set for the full shearlet system by
$$
\Lambda:= \left\{(j,k,m,\iota) \, : \, \iota \in \{-1,0,1\}: \ |\iota|j = j \geq 0,\ |k| \leq |\iota| \left\lceil2^{\frac{j}{2}}\right\rceil,\ m \in \Z^2\right\}.
$$

%------------------------------------------------------------------------------------------------------------------------------
\subsubsection{Frames of shearlet systems}
%------------------------------------------------------------------------------------------------------------------------------

By imposing some weak conditions on the generators $\phi$, $\psi$, and $\widetilde{\psi}$, the system $\mathcal{SH}(\phi, \psi, \widetilde{\psi}, c)$
forms a frame for $L^2(\R^2)$. In fact, the following result is proved in \cite{KGLConstrCmptShear2012}.

\begin{theorem}[\cite{KGLConstrCmptShear2012}]\label{theorem:shearletsframe}
Let $\phi, \psi \in L^2(\R^2)$ such that
\begin{align*}
    |\phihat(\xi_1, \xi_2)| \leq C_1 \min \{1, | \xi_1|^{-\beta} \} \min \{ 1, |\xi_2|^{-\beta} \}
\end{align*}
and
\begin{align*}
    |\psihat(\xi_1, \xi_2)| \leq C_2 \min \{1, | \xi_1|^{\alpha} \} \min \{1, | \xi_1|^{-\beta} \} \min \{ 1, |\xi_2|^{-\beta} \},
\end{align*}
for some constants $C_1,C_2 >0$, and $\alpha > \beta >3$ and almost every $(\xi_1, \xi_2) \in \R^2$. Then, there exists $B>0$ such that for any $c = (c_1, c_2) \in \R^{+}\times \R^{+}$ the cone-adapted shearlet system
$\mathcal{SH}(\phi,\psi, \psitilde, c)$ forms a Bessel sequence for $L^2(\R^2)$ with Bessel bound $B/|\det(M_c)|$.
Further, let $\psitilde (x_1,x_2) = \psi(x_2, x_1)$
and assume there exists a positive constant $A>0$ such that
\begin{align*}
    |\phihat(\xi)|^2 + \sum_{j \in \N} \sum_{|k| \leq \lceil 2^{j/2}  \rceil} |\psihat(S_k^T (A_j)^{-1} \xi)|^2 + \sum_{j \in \N} \sum_{|k| \leq \lceil 2^{j/2}  \rceil} |\widehat{\psitilde}(S_k (\widetilde{A}_j)^{-1} \xi)|^2 > A
\end{align*}
holds almost everywhere. Then there exists $c = (c_1, c_2) \in \R^{+}\times \R^{+}$ such that the cone-adapted shearlet system
$\mathcal{SH}(\phi,\psi, \psitilde, c)$ forms a frame for $L^2(\R^2)$.
\end{theorem}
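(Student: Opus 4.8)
The plan is to pass to the frequency side and reduce both assertions to the analysis of a single \emph{frame function} together with an off-diagonal correlation term. Write $\psi_{j,k,m} = g_{j,k}(\cdot - B_{j,k}m)$ with $g_{j,k} := 2^{3j/4}\psi(S_k A_j\,\cdot)$ and $B_{j,k} := A_j^{-1}S_k^{-1}M_c$, so that each family of shears-and-translates lives on the lattice $B_{j,k}\Z^2$ with $|\det B_{j,k}| = |\det M_c|\,2^{-3j/2}$; a direct computation gives $\widehat{g_{j,k}}(\xi) = 2^{-3j/4}\psihat((S_kA_j)^{-T}\xi)$. For each fixed $(j,k)$ Plancherel's theorem and the Poisson/Walnut lattice representation yield
\[
\sum_{m\in\Z^2}|\langle f,\psi_{j,k,m}\rangle|^2 = \frac{1}{|\det B_{j,k}|}\sum_{\nu\in\Z^2}\int_{\R^2}\fhat(\xi)\,\overline{\fhat(\xi-B_{j,k}^{-T}\nu)}\;\overline{\widehat{g_{j,k}}(\xi)}\,\widehat{g_{j,k}}(\xi-B_{j,k}^{-T}\nu)\,d\xi,
\]
and analogously for the $\phi$- and $\psitilde$-parts on their respective lattices. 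Summing over all of $\Lambda$ and isolating $\nu=0$ (using that the sum over $k$ is symmetric, so $(S_kA_j)^{-T}$ and $S_k^TA_j^{-1}$ may be interchanged) produces
\[
\sum_{\mu\in\Lambda}|\langle f,\psi_\mu\rangle|^2 = \frac{1}{|\det M_c|}\int_{\R^2}|\fhat(\xi)|^2\,G(\xi)\,d\xi + \frac{1}{|\det M_c|}R(f,c),
\]
where $G$ is exactly the expression in the lower-bound hypothesis and $R$ gathers all $\nu\ne0$ cross terms.

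For the Bessel bound I would first show $\sup_\xi G(\xi)<\infty$. The decay hypothesis bounds $|\psihat(S_k^TA_j^{-1}\xi)|$ by the product of the three $\min$-factors evaluated at $S_k^TA_j^{-1}\xi$: the factor $\min\{1,|\cdot|^{\alpha}\}$ with $\alpha>0$ tames the low-frequency region near the origin, while the two factors $\min\{1,|\cdot|^{-\beta}\}$ with $\beta>3$ force absolute convergence of both the sum over the $\sim 2^{j/2}$ admissible shears $k$ at each scale and the sum over scales $j$; the $\phihat$-term controls a bounded neighbourhood of the origin. A crude Cauchy--Schwarz estimate of $R$ in the Walnut representation bounds the off-diagonal contribution by a finite constant that is uniform in $c$. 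Together these give $\sum_{\mu\in\Lambda}|\langle f,\psi_\mu\rangle|^2 \le (B/|\det M_c|)\|f\|^2$, which is the asserted Bessel bound valid for every $c$.

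For the frame property I would use the hypothesis $G(\xi)\ge A>0$ on the diagonal term and argue that the cross-term $R$ becomes negligible once the sampling is coarse enough. Cauchy--Schwarz gives $|R(f,c)|\le \|f\|^2\sum_{\nu\ne0}\Gamma(c,\nu)$, where $\Gamma(c,\nu)$ is, up to normalisation, a supremum over $\xi$ of $\sum_{j,k}|\widehat{g_{j,k}}(\xi)|\,|\widehat{g_{j,k}}(\xi-B_{j,k}^{-T}\nu)|$. As $c_1,c_2$ are increased, the dual-lattice shifts $B_{j,k}^{-T}\nu = M_c^{-T}S_k^TA_j\nu$ are pushed off the essential support of the translated generator, and the same decay used for $G$ makes $\sum_{\nu\ne0}\Gamma(c,\nu)\to0$. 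Choosing $c$ with $A-\sum_{\nu\ne0}\Gamma(c,\nu)>0$ then yields the lower frame bound $(1/|\det M_c|)\,(A-\sum_{\nu\ne0}\Gamma(c,\nu))\|f\|^2$. The hypothesis $\psitilde(x_1,x_2)=\psi(x_2,x_1)$ means the second cone is obtained from the first by the coordinate swap $(\xi_1,\xi_2)\mapsto(\xi_2,\xi_1)$, so it requires no separate estimates.

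The technical heart, and the step I expect to be the main obstacle, is the uniform control and the decay in $c$ of $\sum_{\nu\ne0}\Gamma(c,\nu)$. The essential supports of $\xi\mapsto\psihat(S_k^TA_j^{-1}\xi)$ are long thin sheared plates whose number grows like $2^{j/2}$ per scale, so estimating $\sum_{j,k}|\widehat{g_{j,k}}(\xi)|\,|\widehat{g_{j,k}}(\xi-B_{j,k}^{-T}\nu)|$ demands a careful geometric bookkeeping of how these plates and their lattice translates overlap across scales and shears. The conditions $\alpha>\beta>3$ are precisely what render the resulting double sum over $(j,k)$ together with the sum over $\nu$ convergent (and decaying as $c\to\infty$); tracking these quantitative rates is the part that genuinely resists routine estimation.
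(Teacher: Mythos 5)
First, a framing remark: the paper itself contains no proof of this statement --- it is quoted as a known result from \cite{KGLConstrCmptShear2012} --- so the relevant comparison is with the proof technique of that reference. Your overall framework is in fact the same one used there (and in the Daubechies-type frame criteria it builds on): pass to the Fourier side, apply the periodization/Walnut representation to each family of translates, identify the $\nu=0$ term with the frame function $G$ appearing in the hypothesis, and control the $\nu\neq 0$ correlations via Cauchy--Schwarz through quantities $\Gamma(c,\nu)$. Your lattice bookkeeping, the identification of the diagonal term, and the role of the exponents $\alpha>\beta>3$ in making $G$ essentially bounded are all sound.

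The genuine error is that your dependence on the sampling constants runs in the wrong direction, and this breaks the frame-existence step. Since $B_{j,k}=A_j^{-1}S_k^{-1}M_c$, the dual-lattice shifts carry the factor $M_c^{-1}=\diag(1/c_1,1/c_2)$ (incidentally $B_{j,k}^{-T}=A_jS_k^{T}M_c^{-1}$, not $M_c^{-T}S_k^{T}A_j$, though this ordering slip is harmless): \emph{increasing} $c_1,c_2$ shrinks these shifts toward the origin, i.e.\ pulls the frequency translates of the generator \emph{onto} its essential support rather than off it. The cross terms therefore grow, not decay, as $c$ increases; quantitatively, after the change of variables that places the shift inside the argument of $\widehat{\psi}$, the relevant quantity is a sum of the form $\sum_{\nu\neq 0}\Gamma(M_c^{-1}\nu)$, and as $c\to\infty$ the lattice $M_c^{-1}\Z^2$ becomes dense, so by a Riemann-sum argument this sum diverges at the rate $|\det M_c|\int\Gamma(\omega)\,d\omega$. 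Consequently the inequality $A-\sum_{\nu\neq0}\Gamma(c,\nu)>0$ can never be achieved by coarsening the lattice as you propose; it is achieved by \emph{refining} it, i.e.\ letting $c_1,c_2\to 0$, which is exactly how \cite{KGLConstrCmptShear2012} proceeds: one shows $\Gamma(\omega)$ decays polynomially as $|\omega|\to\infty$ (this is where $\alpha>\beta>3$ enters), hence the remainder $R(c)\to 0$ as $c\to 0$, giving the lower frame bound $(A-R(c))/|\det M_c|$ for all sufficiently small $c$. The same inversion undermines your claim that the off-diagonal contribution in the Bessel step is bounded ``uniformly in $c$'': that sum is uniformly bounded only on bounded ranges of the sampling constants, which is also the only regime in which the asserted Bessel bound $B/|\det M_c|$ can hold at all (testing against $f=\phi$ gives $\|\phi\|^4\le B\|\phi\|^2/(c_1c_2)$, impossible for $c_1c_2$ large); here too the uniformity comes from taking $c$ small, not large.
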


Given the results for boundary wavelets in the preceding Subsection \ref{sec:bdWavelets} one can pose the question, whether shearlets can also characterize Sobolev spaces as well as form frames for $H^s(\R^2)$ after reweighting. While there are some embedding results of Besov spaces
into shearlet spaces and vice versa, see \cite{LMN2013}, \cite{DahSteTes}, we are not aware of both a precise characterization of Sobolev
spaces $H^s(\R^2)$ or the construction of shearlet frames for $H^s(\R^2)$ by cone-adapted compactly supported shearlets. In Subsection \ref{subsec:FrameProp} and \ref{subsec:Hsomega} we outline how such constructions can be achieved.
%------------------------------------------------------------------------------------------------------------------------------
\subsubsection{Assumptions on shearlet systems}
%------------------------------------------------------------------------------------------------------------------------------

As for wavelets, we can introduce certain assumptions that shearlets need to fulfill throughout this paper.

\begin{assumptions} \label{ass:Shear}
Let $\mathcal{SH}(\phi,\psi,\psitilde,c)$ be a shearlet system as in Definition \ref{def:ShearletSystem} and $s\geq 0.$

\begin{itemize}
\item[(S1)] $\mathcal{SH}(\phi, \psi, \widetilde{\psi}, c)$  constitutes a frame for $L^2(\R^2)$.

\item[(S2)] For some $C_1,C_2 >0$ the decay conditions
\[
|\widehat{\psi}(\xi_1,\xi_2)| \leq C_1 \frac{\min\{1,|\xi_1|^{\alpha_{\mathrm{sh}}}\}}{\max\{1,|\xi_1|^{\beta_{\mathrm{sh}}}\}\max\{1,|\xi_2|^{\beta_{\mathrm{sh}}}\}}
\]
and
$$
|\widehat{\psitilde}(\xi_1,\xi_2)| \leq C_2 \frac{\min\{1,|\xi_2|^{\alpha_{\mathrm{sh}}}\}}{\max\{1,|\xi_1|^{\beta_{\mathrm{sh}}}\}\max\{1,|\xi_2|^{\beta_{\mathrm{sh}}}\}}
$$
are obeyed for all $(\xi_1, \xi_2) \in \R^2$ and some $\alpha_{\mathrm{sh}},\beta_{\mathrm{sh}}>0$.

\item[(S3)]
The shearlet system $\mathcal{SH}(\phi, \psi, \widetilde{\psi}, c)$ characterizes Sobolev spaces of order $s$, i.e., there
exist $0 < A_{\mathrm{sh}}\leq B_{\mathrm{sh}} < \infty$ such that for all $f\in H^{s}(\R^2)$
\[
\hspace*{-0.5cm}
A_{\mathrm{sh}}\|f\|_{H^{s}(\R^2)}^2  \leq \sum_{(j,k,m,\iota) \in \Lambda} 2^{2js}|\left \langle f, \psi_{j,k,m,\iota}\right \rangle_{L^2(\R^2)} |^2
\leq B_{\mathrm{sh}}\|f\|_{H^{s}(\R^2)}^2.
\]
%\pg{, $s'\in \N$}.
\item[(S4)] $(2^{-js}\psi_{j,k,m,\iota})_{(j,k,m,\iota) \in \Lambda}$ is a frame for $H^s(\R^2)$ with dual frame $(2^{-js}\psi_{j,k,m,\iota})^d_{(j,k,m,\iota) \in \Lambda}$.

\item[(S5)] For all $(j,k,0,\iota)\in \Lambda$ and some $q_{\mathrm{sh}}>0$ we have that
$$
\suppp \psi_{j,k,0,\iota} \subset B_{2^{-j/2}q_{\mathrm{sh}}/2}(0).
$$
\end{itemize}
\end{assumptions}

%\begin{definition}
%A shearlet system that admits properties (S1), (S2), (S3), (S4) and (S5) with  \changed{$s \geq 0$} and $\alpha_{\mathrm{sh}}, \beta_{\mathrm{sh}} > 0$ is called
%\emph{$(s, \alpha_{\mathrm{sh}}, \beta_{\mathrm{sh}})$-admissible shearlet system}. If the parameters $(s,\alpha_{\mathrm{sh}}, \beta_{\mathrm{sh}})$ do not play any role, we simply
%write \emph{admissible shearlet system}.
%\end{definition}

In applications, we are interested in shearlet systems which fulfill all of the requirements (S1)-(S5). However, not all of them will be needed in the proofs of the results to follow which is why we will always only assume those conditions that are necessary for the formulation of the underlying statements.

%Let us emphasize that by the results in this subsection, an abundance of $(0, \alpha, \beta)$-admissible shearlet systems do
%exist. And in fact, this will be a common requirement in later results. Let us also remind the reader again that the existence
%of shearlet systems admitting properties (S1), (S2) in combination with (S3) will be outlined in Subsection \ref{sec:CharSobGlob}.
%Nevertheless we stress that the focus of this paper is not the construction of admissible shearlet systems but the (much more challenging) problem of obtaining an adaption of such a system to a finite domain.

%------------------------------------------------------------------------------------------------------------------------------
\subsubsection{Approximation properties} \label{sec:ApproxPropPlane}
%------------------------------------------------------------------------------------------------------------------------------

The target property shearlet systems were designed to satisfy is optimal approximation of cartoon-like functions.
This class of functions was first introduced by Donoho in \cite{DCartoonLikeImages2001} as a suitable model for natural images,
and consists---roughly speaking---of compactly supported functions which are $C^2$ apart from a $C^2$-discontinuity curve.

One key ingredient in the definition of cartoon-like functions is the class of sets $STAR^2(\nu)$, which are star-shaped sets
with $C^2$ boundary and curvature bounded by $\nu>0$. Then the definition of cartoon-like functions reads as follows.

\begin{definition}\label{def:CartoonLikeRTwo}
For $\nu >0$, let $\mathcal{E}^2(\nu)$ denote the set of functions $f \in L^2(\R^2)$, for which there exist some $D\in STAR^2(\nu)$
and $f_i \in C^2(\R^2)$ with compact support in $[0,1]^2$ as well as $\|f_i\|_{C^2} \leq 1$ for $i = 1,2$, such that
$$
f = f_1 + \chi_D f_2.
$$
The elements $f\in \mathcal{E}^2(\nu)$ are called \emph{cartoon-like functions}.
\end{definition}

In the same paper \cite{DCartoonLikeImages2001}, Donoho presented the first optimality result concerning the approximation
rate for this class of functions by more or less arbitrary representation systems. In fact, in \cite{DCartoonLikeImages2001}
it was shown that for any representation system  {$(\varphi_n)_{n\in \N} \subset L^2(\R^2)$}, the minimally achievable uniform asymptotic
approximation error for $ \mathcal{E}^2(\nu)$ is
$$
\sup_{f\in \mathcal{E}^2(\nu)}\|f-f_N\|_{L^2(\R^2)}^2 = O(N^{-2}) \quad \mbox{as } N \to \infty,
$$
where $f_N$ denotes the non-linear best $N$-term approximation of $f$ obtained by choosing the $N$ largest coefficients
through \emph{polynomial depth search}. The term polynomial depth search means that the $i$-th term in the expansion can
only be chosen in accordance with a selection rule $\sigma(i,f)$, which obeys $\sigma(i,f) \leq \pi(i)$ for a fixed
polynomial $\pi(i)$, see \cite{DCartoonLikeImages2001}; thereby avoiding artificial representation systems which are dense in $L^2(\R^2)$.

In \cite{KLcmptShearSparse2011}, one of the authors together with Lim proved that there exist shearlet systems such that any dual frame can achieve
this optimal rate up to a $\log$ factor. More precisely, the following theorem was shown.

\begin{theorem}[\cite{KLcmptShearSparse2011}]\label{thm:compShearAreOptSparse}
Let $c>0$, and let $\phi, \psi, \psitilde \in L^2(\R^2)$ be compactly supported. Suppose that, in addition, for all
$\xi = (\xi_1,\xi_2) \in \R^2$ the shearlet $\psi$ satisfies
\begin{compactenum}[(i)]
\item $|\widehat{\psi}(\xi)|^2 \leq C_1\cdot \min(1, |\xi_1|^\alpha)\cdot \min(1, |\xi_1|^{-\beta}) \cdot \min(1, |\xi_2|^{-\beta})$ and
\item $ |\frac{\partial}{\partial x_2}\widehat{\psi}(\xi)| \leq |h(\xi_1)|\cdot \left(1+\frac{|\xi_2|}{|\xi_1|} \right)^{-\beta}$,
\end{compactenum}
where $\alpha>5$, $\beta \geq 4$, $h\in L^1(\R)$, and $C_1>0$, and suppose that $\psitilde$ satisfies (i) and (ii) with $\xi_1$ and
$\xi_2$ interchanged. Further suppose that $\mathcal{SH}(\phi, \psi, \psitilde,c)$ forms a frame for $L^2(\R^2)$.
Then for all $\nu>0$, any dual frame of the shearlet frame $\mathcal{SH}(\phi, \psi, \psitilde,c)$ provides \emph{(almost) optimally sparse approximations
of functions $f\in \mathcal{E}^2(\nu)$}, i.e.
\begin{align*}
\sum_{n\geq N} \theta_n(f) \lesssim N^{-2} \log(N)^3, \text{ as } N\to \infty,
\end{align*}
where $\theta_n(f)$ denotes the $n$-th entry of the non-increasing rearrangement of the coefficient sequence
$(|\langle f, \psi_{j,k,m,\iota}\rangle_{L^2(\R^2)} |^2)_{(j,k,m,\iota) \in \Lambda}$.
\end{theorem}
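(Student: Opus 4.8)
The statement to be established is a weak-type sparsity estimate on the shearlet analysis coefficients: writing $c_{j,k,m,\iota}=\langle f,\psi_{j,k,m,\iota}\rangle_{L^2(\R^2)}$, I must show that the non-increasing rearrangement $\theta_n(f)$ of the sequence $(|c_{j,k,m,\iota}|^2)_{(j,k,m,\iota)\in\Lambda}$ obeys $\sum_{n\geq N}\theta_n(f)\lesssim N^{-2}\log(N)^3$. This is precisely what guarantees that reconstruction with any dual frame from the $N$ largest coefficients attains the rate
\[
\|f - f_N\|_{L^2(\R^2)}^2 \lesssim N^{-2}\log(N)^3,
\]
since the approximation error is controlled by the upper frame bound of the dual frame times $\sum_{n>N}\theta_n(f)$. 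The tail bound is in turn equivalent to a weak-$\ell^{2/3}$ estimate on the magnitudes $(|c_{j,k,m,\iota}|)$, so the plan is to control, for every threshold $\delta>0$, both the number of coefficients with $|c_{j,k,m,\iota}|>\delta$ and the energy $\sum_{|c_{j,k,m,\iota}|\leq\delta}|c_{j,k,m,\iota}|^2$.

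Next I would exploit the splitting $f = f_1 + \chi_D f_2$ with $f_1,f_2\in C^2$ compactly supported and $D\in STAR^2(\nu)$. For the smooth contributions — namely $f_1$ throughout, and $\chi_D f_2$ restricted to those shearlets whose support does not meet the edge $\partial D$ — the coefficients decay super-polynomially in the scale $j$ by the smoothness of the $f_i$ together with the decay and vanishing-moment information encoded in condition (i) on $\widehat{\psi}$. These terms are harmlessly summable and only feed the lower-order (small-coefficient energy) part of the estimate. The essential difficulty is therefore isolated to the coefficients $\langle\chi_D f_2,\psi_{j,k,m,\iota}\rangle$ for shearlets whose anisotropic support, of size $\sim 2^{-j}\times 2^{-j/2}$, intersects the $C^2$-discontinuity curve $\partial D$.

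For these critical coefficients I would localize $\partial D$ at each scale $j$ into pieces of length $\sim 2^{-j/2}$ and, after an affine change of variables adapted to $A_j$ and $S_k$, compare each curve piece with its local tangent line — the $C^2$-curvature error being absorbed exactly by the parabolic scaling relation $2^{-j}\sim(2^{-j/2})^2$. One then separates the shearlets whose shear $k$ aligns with the local tangent direction from the misaligned ones: repeated integration by parts against $\widehat{\psi}$, using the decay estimate (i) and the derivative estimate (ii), yields the size bound $|\langle\chi_D f_2,\psi_{j,k,m,\iota}\rangle|\lesssim 2^{-3j/4}$ for aligned shearlets and rapid decay in the shear offset $|k-k_0|$ otherwise. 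Combined with the geometric count of $O(2^{j/2})$ relevant positions at scale $j$, this produces $\#\{\,|c_{j,k,m,\iota}|>\delta\,\}\lesssim\delta^{-2/3}$ and the matching small-coefficient energy bound, up to logarithmic factors — exactly the weak-$\ell^{2/3}$ behaviour sought.

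The hard part will be the alignment-decay estimates near the curve: making the stationary-phase / integration-by-parts bookkeeping uniform over all scales $j$, shears $k$, and translates $m$, while simultaneously tracking the $C^2$-curvature correction and the edge-length factor so that the summation over the shear parameters yields precisely the three logarithmic powers. Once both weak-type sums — the count of large coefficients and the energy of the small ones — are in place, the passage back to $\sum_{n>N}\theta_n(f)\lesssim N^{-2}\log(N)^3$ is a routine rearrangement argument.
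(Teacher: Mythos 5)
This theorem is not proved in the paper at all---it is quoted as a known result from Kutyniok--Lim \cite{KLcmptShearSparse2011}---and your outline reproduces essentially the strategy of that original proof: reduction of the tail bound to a weak-$\ell^{2/3}$ estimate via the Bessel bound of the dual frame, separation of smooth and edge-interacting coefficients, localization of $\partial D$ into pieces of length $\sim 2^{-j/2}$ with tangent-line linearization absorbed by parabolic scaling, an aligned-versus-misaligned shear analysis driven by the decay condition (i) and the derivative condition (ii), and the $O(2^{j/2})$ count of significant coefficients per scale. Note only that your sketch defers exactly the technical core that makes the cited proof long (the uniform edge-fragment interaction estimates over all $j,k,m$ and the bookkeeping producing the three logarithmic powers), and that the coefficients of the smooth part decay only at a polynomial rate, since the $f_i$ are merely $C^2$ rather than $C^\infty$---polynomial decay of sufficient order is all that is needed there, but ``super-polynomially'' is not accurate.
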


We wish to emphasize that all presented results of shearlet systems in this section only hold for $\R^2$. The main objective of
this paper is to introduce a suitable multiscale anisotropic directional system which allows transferring these results to
preferably any bounded domain  {$\Omega \subseteq \R^2$}.

%------------------------------------------------------------------------------------------------------------------------------
\section{Domain adapted multiscale anisotropic directional systems}\label{subsec:NewConstruction}
%------------------------------------------------------------------------------------------------------------------------------

The key idea for the construction of domain-adapted multiscale anisotropic directional systems on a domain $\Omega$
which we will coin {\it boundary shearlet systems} consists in combining two different frames, i.e., employing hybrid systems.
To be more precise, we will use boundary wavelet elements to handle the boundary $\partial \Omega$, since these
systems are already adapted to such a boundary. In addition, compactly supported shearlet elements will be used for the
interior of $\Omega$ to achieve the desired optimal approximation rates of cartoon-like functions.
To this end we fix a boundary wavelet system $\mathcal{W}$ for $\Omega$ and a shearlet system $\mathcal{SH}(\phi,\psi,\psitilde,c)$ which fulfill the support properties of (W3), (W4), and (S5) of Assumption \ref{ass:Wave} and \ref{ass:Shear}, respectively.

Certainly, those elements of each system used for the hybrid construction have to be carefully selected. This will be done
by defining a tubular region $\Gamma_{\gamma(j)}$ around the boundary with $\gamma(j)$ depending on the scales $j$ and selecting
those wavelet elements non-trivially intersecting these regions for each scale. As for the selection of the elements of a
compactly supported shearlet system, we choose all those the support of which is completely contained inside $\Omega$. It is conceivable
that the tubular region $\Gamma_{\gamma(j)}$ needs to be defined depending on properties of the shearlets since the frame property
of the hybrid systems requires a small cross-localization of elements from both systems. {Due to these considerations, for $r \in \R$, we now define the tubular region
$\Gamma_{r}$ by
\begin{equation*} %\label{eq:Gamma_t}
\Gamma_{r} := \{x \in \Omega : d(x,\partial \Omega) < q_{\mathrm{sh}} 2^{-r}\},
\end{equation*}
i.e., as the part of $\Omega$ that has distance less than $q_{\mathrm{sh}} 2^{- r}$ from $\partial \Omega$. This gives rise to the
following definition.

\begin{definition}\label{def:variableWidthShearletSystem}
Let $s\in \N_0$, $\mathcal{SH}(\phi,\psi,\widetilde{\psi},c) = (\psi_{j,k,m,\iota})_{(j,k,m,\iota) \in \Lambda}$ be a shearlet system fulfilling (S5) of Assumption \ref{ass:Shear}, let $\tau>0$ and $t > 0$. Further, let $\mathcal{W}$ be a boundary
wavelet system fulfilling (W3) and (W4) of Assumption $\ref{ass:Wave}$ and set
\begin{align*}
\mathcal{W}_{t,\tau}:&= \{\omega_{j,m,\upsilon} \in \mathcal{W}: (j,m,\upsilon) \in \Theta_{t,\tau} \},
\end{align*}
where
\begin{align*}
\Theta_{t,\tau} :&= \{(j,m,\upsilon) \in \Theta: B_{2^{-j}(q_{\mathrm{w}}^{(0)} + q_{\mathrm{w}}^{(1)})}(m) \cap \Gamma_{\tau(j-t)} \neq \emptyset\}.
\end{align*}
In addition, let
\[
\Lambda_0: = \{ (j,k,m,\iota)\in \Lambda :  \suppp \psi_{j,k,m,\iota} \subset \Omega\}.
\]
Then, the \emph{boundary shearlet system with offsets $t$ and $\tau$}
is defined as
\begin{align*}\mathcal{BSH}_{t,\tau}^s(\Wcal; \phi, \psi, \psitilde, c):= \{ \psi_{j,k,m,\iota}:  (j,k,m,\iota)\in \Lambda_0\} \cup \mathcal{W}_{t,\tau}.
\end{align*}
\end{definition}
This definition of a boundary shearlet system mimics precisely the program we just intuitively described before. The reader should
notice that as $j \to \infty$, the size of the tubular region shrinks accordingly. Concerning the two offsets, the parameter $t$
acts as a shift for the dependence on the scale, whereas the parameter $\tau$ is merely an overall factor controlling the speed of
decay.

Indeed, this choice of wavelets versus shearlets
provides us with a cross localization property, which will be crucial for the proofs of Theorems \ref{thm:FrameProperty}
and \ref{thm:AnalysisCharOfHOmega}. Since the proof is quite lengthy and technical, we postpone it to Appendix \ref{subsec:loc_sw}. We denote $\Lambda_0^c: = \Lambda \setminus \Lambda_0$ and $\Theta_{\tau, t }^c : = \Theta \setminus \Theta_{t,\tau}$.

\begin{proposition}\label{prop:CrossDecaySum}
Let $s\in \N,$ and let $\mathcal{SH}(\phi, \psi, \psitilde, c)$ and $\mathcal{W}$ satisfy (S2),(S5) and (W3),(W4), respectively. Further let 
$\alpha_{\mathrm{w}} > 1$, $\alpha_{\mathrm{sh}} >0,$ $\beta_{\mathrm{w}} \geq s,$ $\beta_{\mathrm{sh}}>2+2s+2\alpha_{\mathrm{w}}$, $\tau > 0$ and $\epsilon >0$ such that $((1-\epsilon)/\tau - 2)\alpha_{\mathrm{w}} >\frac{5}{2}.$ Then we have for all $t> 0$
\begin{align*}
    \sum_{(j_{\mathrm{sh}},k,m_{\mathrm{sh}},\iota) \in \Lambda_0^c}  \sum_{(j_{\mathrm{w}},m_{\mathrm{w}},\upsilon)\in \Theta_{\tau, t }^c}|\langle (2^{-j_{\mathrm{w}} s}\omega_{j_{\mathrm{w}},m_{\mathrm{w}},\upsilon})^d,2^{-j_{\mathrm{sh}} s} \psi_{j_{\mathrm{sh}},k,m_{\mathrm{sh}},\iota} \rangle_{H^s(\Omega)}|^2 \lesssim   2^{-2(1-\epsilon)\alpha_{\mathrm{w}} t}.
\end{align*}

%%%% Check this estimate!!!

\end{proposition}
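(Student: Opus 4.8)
The plan is to exploit that, for integer $s$, the $H^s(\Omega)$ inner product is \emph{local}, $\langle f,g\rangle_{H^s(\Omega)}=\sum_{|\alpha|\le s}\int_\Omega \partial^\alpha f\,\overline{\partial^\alpha g}$, so a pair contributes nothing unless the supports meet inside $\Omega$. First I would record the geometric separation built into the index sets. For $(j_{\mathrm{w}},m_{\mathrm{w}},\upsilon)\in\Theta_{\tau,t}^c$, conditions (W3)--(W4) place both $\suppp\omega_{j_{\mathrm{w}},m_{\mathrm{w}},\upsilon}$ and $\suppp(2^{-j_{\mathrm{w}}s}\omega_{j_{\mathrm{w}},m_{\mathrm{w}},\upsilon})^d$ inside the enlarged ball $B_{2^{-j_{\mathrm{w}}}(q_{\mathrm{w}}^{(0)}+q_{\mathrm{w}}^{(1)})}(m_{\mathrm{w}})$, which by definition of $\Theta_{\tau,t}^c$ misses $\Gamma_{\tau(j_{\mathrm{w}}-t)}$; hence the dual wavelet is supported compactly \emph{inside} $\Omega$ at distance $\gtrsim q_{\mathrm{sh}}2^{-\tau(j_{\mathrm{w}}-t)}$ from $\partial\Omega$. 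Extending it by zero and using that $\psi_{j_{\mathrm{sh}},k,m_{\mathrm{sh}},\iota}$ is globally smooth, each inner product therefore equals a full $\R^2$ inner product, $\langle(2^{-j_{\mathrm{w}}s}\omega)^d,2^{-j_{\mathrm{sh}}s}\psi\rangle_{H^s(\Omega)}=\langle(2^{-j_{\mathrm{w}}s}\omega)^d,2^{-j_{\mathrm{sh}}s}\psi\rangle_{H^s(\R^2)}$. On the other side, for $(j_{\mathrm{sh}},k,m_{\mathrm{sh}},\iota)\in\Lambda_0^c$ the shearlet support meets $\partial\Omega$, so a non-vanishing inner product forces the anisotropic support (of length $\sim 2^{-j_{\mathrm{sh}}/2}$ by (S5)) to bridge the gap to the wavelet, giving the scale coupling $2^{-j_{\mathrm{sh}}/2}\gtrsim 2^{-\tau(j_{\mathrm{w}}-t)}$, i.e. $j_{\mathrm{sh}}\lesssim 2\tau(j_{\mathrm{w}}-t)$. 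Since $((1-\epsilon)/\tau-2)\alpha_{\mathrm{w}}>\tfrac52$ forces $\tau<\tfrac12$, this in particular yields $j_{\mathrm{sh}}<j_{\mathrm{w}}$, so the wavelet is always the finer object.

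Next I would estimate a single non-vanishing inner product via Plancherel, $\langle f,g\rangle_{H^s(\R^2)}=\sum_{|\alpha|\le s}\int_{\R^2}\xi^{2\alpha}\hat f(\xi)\overline{\hat g(\xi)}\,d\xi$, bounded by $\int_{\R^2}(1+|\xi|^2)^s|\hat f(\xi)||\hat g(\xi)|\,d\xi$, with $f=(2^{-j_{\mathrm{w}}s}\omega)^d$ and $g=2^{-j_{\mathrm{sh}}s}\psi_{j_{\mathrm{sh}},k,m_{\mathrm{sh}},\iota}$. Inserting the decay bound of (W3) for $\hat f$ and of (S2) for $\hat g$ (after the shear-adapted substitution $\xi\mapsto(S_kA_{j_{\mathrm{sh}}})^{-T}\xi$ dictated by Definition \ref{def:ShearletSystem}), the factor $\min\{1,|2^{-j_{\mathrm{w}}}\xi_i|^{\alpha_{\mathrm{w}}}\}$, which encodes the $\alpha_{\mathrm{w}}$ vanishing moments of the dual wavelet, is small precisely on the frequency region carrying the coarser shearlet and produces a decay that is a power of order $\alpha_{\mathrm{w}}$ of the ratio of the two scales. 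The requirement $\beta_{\mathrm{sh}}>2+2s+2\alpha_{\mathrm{w}}$ is what makes the $\xi$-integral converge against the weight $(1+|\xi|^2)^s$ even after this vanishing-moment factor has been spent, while $\beta_{\mathrm{w}}\ge s$ controls the high-frequency tail of the wavelet; together they also furnish summable off-diagonal decay in the shear index $k$ and the translate $m_{\mathrm{sh}}$. Since (W3) only guarantees the vanishing-moment factor in \emph{one} coordinate $i$, I would take the conservative version of this bound, in which the favourable direction is the short, high-frequency shearlet axis, and carry that through the summation.

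Finally I would sum the per-pair bounds over $\Lambda_0^c\times\Theta_{\tau,t}^c$, organised by the scales $(j_{\mathrm{w}},j_{\mathrm{sh}})$. The counting uses $\#K_{j_{\mathrm{w}}}\sim 2^{2j_{\mathrm{w}}}$ for the wavelet translates and, at shearlet scale $j_{\mathrm{sh}}$, the $\sim 2^{j_{\mathrm{sh}}/2}$ shears together with the number of boundary-crossing planks that can reach a fixed interior point; the overlap constraint $j_{\mathrm{sh}}\lesssim 2\tau(j_{\mathrm{w}}-t)$ restricts the ranges and collapses the double sum to a geometric series whose smallest admissible scale grows linearly in $t$. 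Here $\alpha_{\mathrm{w}}>1$ makes this series summable, and the hypothesis $((1-\epsilon)/\tau-2)\alpha_{\mathrm{w}}>\tfrac52$ is exactly the statement that the combined scale-decay-versus-counting exponent stays negative---the $2$ accounting for the planar wavelet density $2^{2j_{\mathrm{w}}}$ and the $\tfrac52$ for the shearlet counting---so that the series is dominated by its first term and evaluates to $\lesssim 2^{-2(1-\epsilon)\alpha_{\mathrm{w}}t}$. I expect the genuine difficulty to lie in this last, bookkeeping-heavy step together with the single-pair estimate behind it: one has to track the shearlet anisotropy (the shear $k$ and the unequal axis scales $2^{-j_{\mathrm{sh}}}$ and $2^{-j_{\mathrm{sh}}/2}$) against the essentially isotropic, high-vanishing-moment wavelet, handle all derivative orders $|\alpha|\le s$ uniformly, and above all verify that the boundary overlap geometry---controlled by $q_{\mathrm{sh}}$, $q_{\mathrm{w}}^{(0)}$, $q_{\mathrm{w}}^{(1)}$ and the exact width of $\Gamma_{\tau(j-t)}$---really yields the exponent $2(1-\epsilon)\alpha_{\mathrm{w}}$ rather than a weaker rate. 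This is presumably why the argument is deferred to Appendix \ref{subsec:loc_sw}.
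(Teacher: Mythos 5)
Your proposal follows essentially the same route as the paper's proof in Appendix \ref{subsec:loc_sw}: the same support-separation argument yielding the scale coupling $j_{\mathrm{sh}}\le 2\tau(j_{\mathrm{w}}-t)$ for non-vanishing pairs, the same Plancherel-plus-Fourier-decay estimate in which the dual wavelet's vanishing-moment factor from (W3) produces the $2^{\alpha_{\mathrm{w}}(j_{\mathrm{sh}}-j_{\mathrm{w}})}$ gain, and the same translate/shear counting collapsing to a geometric series whose convergence is exactly the hypothesis $((1-\epsilon)/\tau-2)\alpha_{\mathrm{w}}>\frac{5}{2}$ and whose leading term gives $2^{-2(1-\epsilon)\alpha_{\mathrm{w}}t}$. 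The only differences are organizational: the paper disposes of the shear sum via a prepackaged envelope lemma (Lemma \ref{lem:ShearFreqEnv}), and in the final bookkeeping the wavelet density $2^{2j_{\mathrm{w}}}$ is cancelled by the squared normalization factor $2^{-j_{\mathrm{w}}}$ built into the (W3) decay bound rather than entering the exponent condition as your heuristic attribution of the constants suggests.
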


Finally, we note, that it is certainly conceivable to combine other dictionaries in a similar way as described in Definition \ref{def:variableWidthShearletSystem}. For example, anisotropic wavelets are superior to isotropic wavelets in resolving anisotropies along the boundary \cite{AnisotropicWavelets1, AnisotropicWavelets2}. Hence, a hybrid system comprised of anisotropic wavelets and shearlets could yield an even better adapted system if such functions need to be represented. However, we cannot discuss such constructions in this work, since our results crucially depend on the cross-localization of Proposition \ref{prop:CrossDecaySum} and the proof depends crucially on the isotropy of the wavelet elements.

%------------------------------------------------------------------------------------------------------------------------------

%------------------------------------------------------------------------------------------------------------------------------
\subsection{Frame property of boundary shearlet systems}\label{subsec:FrameProp}
%------------------------------------------------------------------------------------------------------------------------------

We now turn to analyzing the frame properties of boundary shearlet systems for $H^s(\Omega)$ on certain domains $\Omega\subset \R^2$.  Towards this goal, the existence of a shearlet system which, after reweighting, forms a frame for $H^s(\R^2)$ (i.e. (S4)), is required. The following result gives an easily satisfiable sufficient condition imposed on the generator functions of the shearlet system, which yields the desired property. We postponed the technical proof to Appendix \ref{sec:ProofHsR2}.

\begin{theorem}\label{Thm:HsR2Frame}
For $s\in\N$ and $\alpha_{\mathrm{sh}} - s> \beta_{\mathrm{sh}} > 4$ let $\phi^1, \psi^1 \in L^2(\R)$ be such that for all $r_1,r_2 \leq 2s$, 
\begin{align*}
    \widehat{D^{r_1}\phi^1}(\xi) &\lesssim \min \{1,|\xi|^{-\beta_{\mathrm{sh}}} \}, \text{ for all } \xi \in \R, \nonumber\\
    \widehat{D^{r_2}\psi^1}(\xi) &\lesssim \min \{1,|\xi|^{\alpha_{\mathrm{sh}}} \} \min \{1,|\xi|^{-\beta_{\mathrm{sh}}} \}, \text{ for all } \xi \in \R.
\end{align*}
We further denote $\phi = \phi^1 \otimes \phi^1,$ $\psi = \psi^1 \otimes \phi^1$ and $\tilde{\psi}(x_1, x_2) := \psi(x_2, x_1)$ for all $x\in \R^2$. Furthermore, we denote for all $\xi \in \R^2$ 
\begin{align}\label{eq:thenotation}
    \widehat{\theta}(\xi) := \xi_1^s \widehat{\psi}(\xi),\qquad    \widehat{\widetilde{\theta}}(\xi) := \xi_2^s \widehat{\widetilde{\psi}}(\xi), \qquad
    \widehat{\mu}(\xi):=  \widehat{\phi}(\xi).
    \end{align}
Assume that there exists $\bar c>0$ such that for all $c_1,c_2\leq \bar{c}$ we have that $\mathcal{SH}(\mu,\theta,\tilde{\theta},(c_1,c_2))$ forms a frame for $L^2( \mathbb R^2)$ with a lower frame bound independent of $c_1,c_2$. 
Then there exists $\tilde{c}>0$ such that for all $c_1=c_2 \leq \tilde{c}$ the system $ = \mathcal{SH}(\phi,\psi,\psitilde,c),$ where $c=(c_1,c_2)$ obeys
$$
\|f\|_{H^s(\mathbb{R}^2)}^2\sim  \sum_{(j,k,m,\iota)\in \Lambda} |\langle f, 2^{-js} \psi_{j,k,m,\iota} \rangle_{H^s(\R^2)}  |^2,
$$
for all $f\in H^s(\R^2)$.
\end{theorem}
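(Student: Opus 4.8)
The plan is to transfer the $H^s$-weight onto the analysing elements by an isometry, reduce the asserted norm equivalence to a pure $L^2$-frame statement, and then compare the resulting system with the assumed frame $\mathcal{SH}(\mu,\theta,\thetatilde,c)$.

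First I would introduce the Bessel-potential operator $J^s := (I-\Delta)^{s/2}$, i.e.\ the Fourier multiplier with symbol $(1+|\xi|^2)^{s/2}$, which is an isometric isomorphism $H^s(\R^2)\to L^2(\R^2)$ with $\langle f,g\rangle_{H^s(\R^2)} = \langle J^sf,J^sg\rangle_{L^2(\R^2)}$. Writing $\Psi_{j,k,m,\iota} := 2^{-js}J^s\psi_{j,k,m,\iota}$, this yields $\langle f,2^{-js}\psi_{j,k,m,\iota}\rangle_{H^s(\R^2)} = \langle J^sf,\Psi_{j,k,m,\iota}\rangle_{L^2(\R^2)}$ and $\|J^sf\|_{L^2(\R^2)}=\|f\|_{H^s(\R^2)}$. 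Since $J^s$ is onto $L^2(\R^2)$, the claimed equivalence is \emph{equivalent} to the statement that $(\Psi_{j,k,m,\iota})_{(j,k,m,\iota)\in\Lambda}$ is a frame for $L^2(\R^2)$, which is the object I would analyse.

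Next I would pass to the Fourier side. Using $[(S_kA_j)^{-T}\xi]_1 = 2^{-j}\xi_1$ and $\widehat{\theta}(\xi)=\xi_1^s\widehat{\psi}(\xi)$ one gets the exact identity $\widehat{\theta_{j,k,m}}(\xi)=2^{-js}\xi_1^s\widehat{\psi_{j,k,m}}(\xi)$, so that on the frequency support of $\psi_{j,k,m}$ (the horizontal cone, where $|\xi_1|\sim 2^j$) the symbol of $\Psi_{j,k,m}$ factors as $\widehat{\Psi_{j,k,m}}(\xi)=\tfrac{(1+|\xi|^2)^{s/2}}{|\xi_1|^s}\,\widehat{\vartheta_{j,k,m}}(\xi)$, where $\vartheta$ is the auxiliary generator with $\widehat{\vartheta}(\xi):=|\xi_1|^s\widehat{\psi}(\xi)$ (and a vertical counterpart $\widetilde{\vartheta}$ with $|\xi_2|^s\widehat{\psitilde}$). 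Crucially $\vartheta=V\theta$ for the unitary frequency multiplier $V$ of modulus one acting as $\operatorname{sign}(\xi_1)^s$ on the horizontal cone, $\operatorname{sign}(\xi_2)^s$ on the vertical cone and as the identity near the origin; hence $\mathcal{SH}(\mu,\vartheta,\widetilde{\vartheta},c)=V\,\mathcal{SH}(\mu,\theta,\thetatilde,c)$ is a frame for $L^2(\R^2)$ with the \emph{same} bounds, in particular the same $c$-independent lower bound $A$, as the assumed $\theta$-frame. This disposes of the sign ambiguity appearing for odd $s$ and makes $\vartheta$ the correct comparison object. I would then split $\Psi_{j,k,m,\iota}=\vartheta_{j,k,m,\iota}+R_{j,k,m,\iota}$, the remainder carrying the microlocally lower-order symbol $2^{-js}[(1+|\xi|^2)^{s/2}-|\xi_1|^s]\widehat{\psi_{j,k,m}}$, since $(1+|\xi|^2)^{s/2}/|\xi_1|^s=1+O(|\xi_1|^{-2})$ on the cone. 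The decay hypotheses on $\widehat{D^{r_1}\phi^1}$ and $\widehat{D^{r_2}\psi^1}$ for $r_1,r_2\le 2s$, together with $\alpha_{\mathrm{sh}}-s>\beta_{\mathrm{sh}}>4$, guarantee that every $\Psi_{j,k,m,\iota}$ and every $R_{j,k,m,\iota}$ is a shearlet molecule whose profile still meets the decay exponents $\alpha>\beta>3$ required in Theorem~\ref{theorem:shearletsframe}; this makes both $(\Psi_{j,k,m,\iota})$ and $(R_{j,k,m,\iota})$ Bessel sequences and already delivers the upper bound. For the lower bound I would invoke frame perturbation: as the $\vartheta$-frame has lower bound $A$ independent of $c$, whereas the Bessel bound of $(R_{j,k,m,\iota})$ can be pushed below $A$ by taking $c=c_1=c_2$ small (exploiting the microlocal lower-order gain), the reverse triangle inequality in $\ell^2$ forces a strictly positive lower bound for $(\Psi_{j,k,m,\iota})$, so $(\Psi_{j,k,m,\iota})$ is a frame for $L^2(\R^2)$, which is the assertion.

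The hard part will be the lower bound. The difficulty is that the $H^s$-symbol $(1+|\xi|^2)^{s/2}$ is not scale-covariant and deviates from the homogeneous shearlet symbol $|\xi_1|^s$ by an $O(1)$ factor at the cone boundaries and at low frequencies, so the comparison with $\vartheta$ is \emph{not} a globally small perturbation but only a microlocal, boundedly invertible multiplier relation. Making the lower frame bound quantitative---i.e.\ controlling the Bessel constant of $(R_{j,k,m,\iota})$ strictly below the $c$-uniform lower bound $A$---is the delicate step, and it is exactly what forces the hypothesis that the $\mathcal{SH}(\mu,\theta,\thetatilde,c)$ lower bound be independent of $c$ and what requires passing to small $c$. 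The supporting technicalities are the unitary correction $V$ handling the sign for odd $s$, and verifying that the molecule decay estimates survive up to $2s$ differentiations, which is precisely why the generator hypotheses are stated for derivatives of order $\le 2s$.
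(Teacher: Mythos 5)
Your reduction via the Bessel potential $J^s$ and the exact identity $\widehat{\theta_{j,k,m}}(\xi)=2^{-js}\xi_1^s\widehat{\psi_{j,k,m}}(\xi)$ are both sound, and they match the starting point of the paper's own argument (which works with $(1+|\cdot|^2)^{s/2}\hat f$ throughout); the upper bound is also reachable along your molecule route, though the paper instead uses the product rule to write $D^{2\mathbf{a}}\psi_{j,k,m,\iota}$ as a combination of shearlet systems with differentiated tensor generators and applies the weighted Bessel estimate of Lemma \ref{lemm:0}. The genuine gap is in your lower bound. Its pivot is the claim that $(1+|\xi|^2)^{s/2}/|\xi_1|^s=1+O(|\xi_1|^{-2})$ on the horizontal cone, so that the remainder system $(R_{j,k,m,\iota})$ is microlocally lower order. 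This is false: on the cone $|\xi_2|\le|\xi_1|$ one only has $1\le (1+|\xi|^2)^{s/2}/|\xi_1|^s\lesssim 2^{s/2}$ (with blow-up at low frequencies), and the shearlets with $|k|\approx 2^{j/2}$ concentrate their frequency content precisely near the cone boundary $|\xi_2|\approx|\xi_1|$, where $(1+|\xi|^2)^{s/2}-|\xi_1|^s$ is of the \emph{same order} as $|\xi_1|^s$. Hence the Bessel constant of $(R_{j,k,m,\iota})$ is comparable to that of the main system. Worse, your proposed mechanism for gaining smallness---shrinking $c$---acts in the wrong direction: Bessel bounds of such systems scale like $1/|\det(M_c)|$ (see Theorem \ref{theorem:shearletsframe} and Lemma \ref{lemm:0}), so they \emph{grow} as $c\to 0$, while the assumed lower bound $A$ is merely bounded below; there is no regime in which the Bessel bound of $(R_{j,k,m,\iota})$ drops below $A$. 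A secondary but real problem is the sign fix for odd $s$: your $V$ applies different multipliers ($\operatorname{sign}(\xi_1)^s$, $\operatorname{sign}(\xi_2)^s$, identity) to the three subfamilies, and replacing $\varphi_n$ by $U_n\varphi_n$ with element-dependent unitaries does not preserve a lower frame bound, since $\sum_n|\langle U_n^*f,\varphi_n\rangle|^2$ evaluates the different subfamilies at different functions; moreover $V\theta_{j,k,m}\neq\vartheta_{j,k,m}$ when $V$ is defined piecewise on cones, because compactly supported shearlets are not frequency supported inside the cones.

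For comparison, the paper (Lemma \ref{lemm:lowerBdSobolev}) closes the lower bound non-perturbatively: it never needs $(1+|\xi|^2)^{s/2}$ and $|\xi_1|^s$ to be comparable, only the one-sided domination $|\xi_1|^s=2^{-js}|(A_jS_k^T\xi)_1|^s\le 2^{-js}(1+|A_jS_k^T\xi|^2)^{s/2}$. After truncating all generators to the frequency box $[-\tfrac{1}{2c_1},\tfrac{1}{2c_1}]^2$ (so that Parseval over the translation lattice applies exactly), it expands $(1+|\eta|^2)^{s}\lesssim\sum_{|\mathbf{a}|\le s}(2\pi)^{2|\mathbf{a}|}\eta^{2\mathbf{a}}$ and undoes the Fourier transform by partial integration, landing exactly on $\sum_{(j,k,m,\iota)\in\Lambda}|\langle f,2^{-js}\psi^c_{j,k,m,\iota}\rangle_{H^s(\R^2)}|^2$; this is where derivatives of order up to $2s$---and hence the hypotheses on $\widehat{D^{r}\phi^1},\widehat{D^{r}\psi^1}$ for $r\le 2s$---enter. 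Smallness in $c$ is used only for the truncation tails $\theta-\theta^c$ and $\psi-\psi^c$, whose generator constants are $O(c_1^{1+\epsilon})$ by the decay $\beta_{\mathrm{sh}}>4$ and therefore beat the $1/\sqrt{|\det(M_c)|}$ factor in the Bessel estimate. This domination-plus-truncation device is the missing idea that would have to replace your small-perturbation step.
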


Having established the existence of shearlet frames for $H^s(\R^2),$ we can now turn our attention to the analysis of the frame property of boundary shearlet systems for $H^s(\Omega)$.  We require $\Omega$ to be chosen in such a way that there exists some linear extension operator $\ExtOpMod: H^s(\Omega)\to H^s(\R^2)$ such that
$\ExtOpMod(f)_{|\Omega} = f$ for all $f\in H^s(\Omega)$ and $\|\ExtOpMod\|_{H^s(\Omega) \to H^s(\R^2)} \leq {M_{\mathrm{ext}}}$ for some ${M_{\mathrm{ext}}>0}$. For example, domains which are \emph{minimally smooth} (see \cite{Stein1970SingIng}) fulfill this property. The following result now gives us weak sufficient conditions, under which a boundary shearlet system indeed forms a frame.

\begin{theorem} \label{thm:FrameProperty}
Let $\mathcal{SH}(\phi, \psi, \psitilde, c)$ and $\mathcal{W}$ satisfy (S2), (S4), (S5) and (W2), (W3), (W4), respectively with $s\in\N\cup\{0\},$ $\alpha_{\mathrm{w}}, \alpha_{\mathrm{sh}},\beta_{\mathrm{w}},\beta_{\mathrm{sh}},\tau,\epsilon$ as in Proposition \ref{prop:CrossDecaySum}. 
Then there exists some $T>0$ such
that, for any $t\geq T$, the boundary shearlet system ${(\varphi_n)_{n\in \N} = \mathcal{BSH}_{t,\tau}^s(\bdWaveS; \phi, \psi, \psitilde, c)}$ satisfies
\begin{align}\label{eq:FrameForHs}
\|f\|_{H^s(\Omega)}^2  \sim \sum_{n\in \N} |\left \langle f, 2^{-j_n s}\varphi_{n} \right \rangle_{H^s(\Omega)} |^2 \fa f\in H^s(\Omega). 
\end{align}
\end{theorem}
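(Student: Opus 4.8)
I organize the argument around the two inequalities in \eqref{eq:FrameForHs}, writing $\lambda=(j,k,m,\iota)$ for shearlet indices and $\theta=(j,m,\upsilon)$ for wavelet indices, and abbreviating $\psi_\lambda:=\psi_{j,k,m,\iota}$, $\omega_\theta:=\omega_{j,m,\upsilon}$. The upper (Bessel) bound is the easy half. The wavelet part $\{2^{-j_\theta s}\omega_\theta\}_{\theta\in\Theta_{t,\tau}}$ is a subset of $\{2^{-j_\theta s}\omega_\theta\}_{\theta\in\Theta}$, which is a frame for $H^s(\Omega)$ by (W3), hence Bessel with the same bound. For the shearlet part I use that every $\lambda\in\Lambda_0$ has $\suppp\psi_\lambda\subset\Omega$, so that $\langle f,2^{-j_\lambda s}\psi_\lambda\rangle_{H^s(\Omega)}=\langle \ExtOpMod f,2^{-j_\lambda s}\psi_\lambda\rangle_{H^s(\R^2)}$; summing over $\Lambda_0\subset\Lambda$ and invoking the Bessel bound of the $H^s(\R^2)$-shearlet frame (S4) together with $\|\ExtOpMod f\|_{H^s(\R^2)}\le M_{\mathrm{ext}}\|f\|_{H^s(\Omega)}$ gives the upper estimate. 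This is the only place the extension operator is needed.

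The lower bound is the heart, and is where Proposition \ref{prop:CrossDecaySum} enters. Put $d_\theta:=\langle f,2^{-j_\theta s}\omega_\theta\rangle_{H^s(\Omega)}$; by (W3) we have $\|f\|_{H^s(\Omega)}^2\sim\|(d_\theta)_{\theta\in\Theta}\|_{\ell^2}^2$, and since $\Theta=\Theta_{t,\tau}\cup\Theta_{t,\tau}^c$ it suffices to control the \emph{discarded} interior wavelet coefficients $(d_\theta)_{\theta\in\Theta_{t,\tau}^c}$ by the kept shearlet and wavelet coefficients. Writing $\eta_\theta:=(2^{-j_\theta s}\omega_\theta)^d$ for the dual frame from (W3), so that $f=\sum_{\theta\in\Theta}d_\theta\eta_\theta$, I define the cross-Gram operator $(G^T c)_\lambda:=\sum_{\theta\in\Theta}c_\theta\langle\eta_\theta,2^{-j_\lambda s}\psi_\lambda\rangle_{H^s(\Omega)}$ for $\lambda\in\Lambda_0$. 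Then $G^T(d_\theta)_{\theta\in\Theta}$ is exactly the vector of kept shearlet coefficients, and $G^T$ is bounded, being the composition of the bounded synthesis map of the dual frame with the interior-shearlet analysis map (bounded by the upper estimate). Hence if $G^T$ is bounded below on $\ell^2(\Theta_{t,\tau}^c)$, then splitting $G^T(d_\theta)_{\theta\in\Theta_{t,\tau}^c}=G^T(d_\theta)_{\theta\in\Theta}-G^T(d_\theta)_{\theta\in\Theta_{t,\tau}}$ and using boundedness of $G^T$ yields $\|(d_\theta)_{\Theta_{t,\tau}^c}\|\lesssim\|(\text{kept shearlet coeffs})\|+\|(d_\theta)_{\Theta_{t,\tau}}\|$, which is precisely what is required.

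It remains to show that $G^T$ is bounded below on $\ell^2(\Theta_{t,\tau}^c)$ for $t$ large. Fix $c=(c_\theta)_{\theta\in\Theta_{t,\tau}^c}$ and set $g:=\sum_{\theta\in\Theta_{t,\tau}^c}c_\theta\eta_\theta$. By (W3) the duals $\eta_\theta$ with $\theta\in\Theta_{t,\tau}^c$ are supported in balls disjoint from the tubular region, hence in the interior of $\Omega$, so $g$ extends by zero to $H^s(\R^2)$ with $\|g\|_{H^s(\R^2)}=\|g\|_{H^s(\Omega)}$. Since the reweighted boundary wavelets form a Riesz basis for $H^s(\Omega)$ (a consequence of the Riesz basis structure (W1) and the characterization (W2)), their duals obey a lower Riesz bound, so $\|g\|_{H^s(\Omega)}^2\gtrsim\|c\|^2$. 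Applying the $H^s(\R^2)$-shearlet frame lower bound (S4) to $g$ and splitting off the boundary shearlets gives
\begin{align*}
\|G^Tc\|^2=\sum_{\lambda\in\Lambda_0}|\langle g,2^{-j_\lambda s}\psi_\lambda\rangle_{H^s(\R^2)}|^2=\sum_{\lambda\in\Lambda}|\langle g,2^{-j_\lambda s}\psi_\lambda\rangle|^2-\sum_{\lambda\in\Lambda_0^c}|\langle g,2^{-j_\lambda s}\psi_\lambda\rangle|^2\geq A_{\mathrm{sh}}\|g\|^2-\sum_{\lambda\in\Lambda_0^c}\Big|\sum_{\theta\in\Theta_{t,\tau}^c}c_\theta\langle\eta_\theta,2^{-j_\lambda s}\psi_\lambda\rangle_{H^s(\Omega)}\Big|^2.
\end{align*}
By Cauchy--Schwarz the last sum is at most $\|c\|^2\sum_{\lambda\in\Lambda_0^c}\sum_{\theta\in\Theta_{t,\tau}^c}|\langle(2^{-j_\theta s}\omega_\theta)^d,2^{-j_\lambda s}\psi_\lambda\rangle_{H^s(\Omega)}|^2$, which Proposition \ref{prop:CrossDecaySum} bounds by $C\,2^{-2(1-\epsilon)\alpha_{\mathrm{w}}t}\|c\|^2$. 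Thus $\|G^Tc\|^2\geq(A_{\mathrm{sh}}c_0-C\,2^{-2(1-\epsilon)\alpha_{\mathrm{w}}t})\|c\|^2$ for the Riesz constant $c_0>0$, and choosing $T$ so that the bracket exceeds $A_{\mathrm{sh}}c_0/2$ for all $t\geq T$ gives the lower bound. Combining the reduction with $\|f\|^2\sim\|(d_\theta)_\Theta\|^2$ completes the proof.

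The genuinely hard input is Proposition \ref{prop:CrossDecaySum}: the whole scheme rests on the fact that the off-diagonal block of the wavelet--shearlet cross-Gram matrix coupling the discarded interior wavelets to the discarded boundary shearlets has Hilbert--Schmidt norm tending to $0$ as $t\to\infty$, which is the cross-localization property whose lengthy proof is deferred to the appendix. A secondary point needing care is the bookkeeping among the three inner products ($H^s(\Omega)$, $H^s(\R^2)$, and the restriction identities for functions supported in the interior of $\Omega$) and the Riesz-basis structure required to pass a lower bound to the subset $\Theta_{t,\tau}^c$. Finally, the case $s=0$ reduces to the frame property for $L^2(\Omega)$ and follows from the identical argument with all weights equal to one and the corresponding $s=0$ cross-localization estimate.
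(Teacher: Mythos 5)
Your upper bound and your handling of the cross terms (zero--extension of the interior dual wavelets, Cauchy--Schwarz, and Proposition \ref{prop:CrossDecaySum} to make the off-diagonal Hilbert--Schmidt norm small for large $t$) match the paper's proof. Your lower bound, however, is organized around a genuinely different reduction, and that reduction has a gap under the stated hypotheses. You reduce, via the frame property in (W3), to controlling the discarded coefficients $(d_\theta)_{\theta\in\Theta_{t,\tau}^c}$, and for this you need the cross-Gram operator $G^T$ to be bounded below on $\ell^2(\Theta_{t,\tau}^c)$, which in turn requires the lower Riesz-type bound $\bigl\|\sum_{\theta\in\Theta_{t,\tau}^c}c_\theta(2^{-j_\theta s}\omega_\theta)^d\bigr\|_{H^s(\Omega)}^2\gtrsim\|c\|_{\ell^2}^2$. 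You justify this by invoking the Riesz basis structure (W1) --- but (W1) is \emph{not} among the hypotheses of Theorem \ref{thm:FrameProperty}, which assumes only (W2), (W3), (W4). Under these assumptions the reweighted wavelet system is merely a frame for $H^s(\Omega)$, possibly redundant; its dual-frame synthesis operator may then have a nontrivial kernel (or be arbitrarily badly conditioned) on sequences supported in $\Theta_{t,\tau}^c$, so $\|g\|_{H^s(\Omega)}$ does not control $\|c\|_{\ell^2}$ and the chain $\|(d_\theta)_{\Theta_{t,\tau}^c}\|\lesssim\|G^T(d_\theta)_{\Theta_{t,\tau}^c}\|$ collapses. (If one does grant (W1), your claim is correct, though the clean derivation combines (W1) with the $H^s$-frame property of (W3) rather than with (W2): the $H^s$-synthesis operator is injective because $H^s(\Omega)\hookrightarrow L^2(\Omega)$ and the $L^2$-synthesis of a Riesz basis is injective, and an injective surjective synthesis operator makes the reweighted system a Riesz basis for $H^s(\Omega)$, whence its unique dual obeys the lower bound.)

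The paper's proof is engineered precisely to avoid any lower bound on a wavelet synthesis operator. It sets $f^{(\mathrm{int})}=\sum_{\theta\in\Theta_{t,\tau}^c}\langle f,2^{-j_\theta s}\omega_\theta\rangle_{H^s(\Omega)}(2^{-j_\theta s}\omega_\theta)^d$ and $f^{(\mathrm{bd})}=f-f^{(\mathrm{int})}$, builds the modified extension $\ExtOp(f)=\ExtOpMod(f^{(\mathrm{bd})})+\tilde f^{(\mathrm{int})}$, and applies the lower frame bound of (S4) to $\ExtOp(f)$; this bounds $A_{\mathrm{sh}}\|f\|^2_{H^s(\Omega)}$ by the kept shearlet coefficients plus two remainders: the cross term $\mathrm{I}$, killed by Proposition \ref{prop:CrossDecaySum} exactly as in your argument, and the term $\mathrm{II}$, which is estimated by the kept wavelet coefficients using only the Bessel bound $B_{\mathrm{sh}}$, the boundedness of $\ExtOpMod$, and the fact that $f^{(\mathrm{bd})}$ is the dual synthesis of the \emph{kept} coefficients, hence of norm at most $C_{\mathrm{w}}^{-1/2}$ times their $\ell^2$ norm. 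Only upper bounds on (dual) synthesis operators enter anywhere. To repair your proof you must either add (W1) to the hypotheses, or replace the lower bound on $G^T|_{\ell^2(\Theta_{t,\tau}^c)}$ by this function-level argument, i.e.\ estimate $\|f\|_{H^s(\Omega)}$ directly through the extension rather than estimating the discarded coefficient sequence.
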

%\begin{remark}
%The operation $f \mapsto f^{(\mathrm{int})}$ in \eqref{eq:extOfInternals2} can be interpreted as a projection onto the span of the dual elements of the interior wavelets. Property \eqref{eq:extOfInternals} then assumes that the trivial extension by zero of these functions will not significantly increase their Sobolev norm. This is, for instance, always satisfied if $\suppp (2^{-js}\omega_{j,m,\nu})^d\cap \partial \Omega = \emptyset$ for all $(j,m,\upsilon) \in \Theta_{t, \tau}^c$.
%\end{remark}

\begin{proof}
By the definition of $\mathcal{BSH}_{t,\tau}^s(\bdWaveS; \phi, \psi, \psitilde, c)$ we have that $\suppp (2^{-js}\omega_{j,m,\nu})^d\cap \partial \Omega = \emptyset$ for all $(j,m,\upsilon) \in \Theta_{t, \tau}^c$. As a consequence, we have that 
\begin{align}\label{eq:extOfInternals}
\|f^{(\mathrm{int})}\|_{H^s(\Omega)} &= \|\tilde{f}^{(\mathrm{int})}\|_{H^s(\R^2)} \fa f\in H^s(\Omega), \text{ where }\\
f^{(\mathrm{int})} &= \sum_{(j,m,\upsilon) \in \Theta_{t, \tau}^c} \langle f, 2^{-js} \omega_{j,m,\upsilon} \rangle_{H^s(\Omega)}(2^{-js}\omega_{j,m,\upsilon})^d \nonumber%\label{eq:extOfInternals2}
\end{align}
and $\tilde{f}^{(\mathrm{int})}$ denotes the trivial extension by $0$ of $f^{(\mathrm{int})}$ to $\R^2$. We will invoke this projection operator further below in the proof. We proceed by observing that there exist
constants $0<A_{\mathrm{sh}} \leq B_{\mathrm{sh}} <\infty$ such that
\begin{align*}
A_{\mathrm{sh}} \|f\|_{H^s(\R^2)}^2  \leq \sum_{(j,k,m,\iota) \in \Lambda} |\left \langle f, 2^{-j s}\psi_{j,k,m,\iota}\right \rangle_{H^s(\R^2)} |^2 \leq B_{\mathrm{sh}}\|f\|_{H^s(\R^2)}^2 \fa f\in H^s(\R^2).
\end{align*}
Moreover, the wavelet system $\mathcal{W}$ obeys
\begin{align*}
C_{\mathrm{w}} \|f\|_{H^s(\Omega)}^2  \leq \sum_{(j,m,\upsilon) \in \Theta} |\left \langle f, 2^{-js}{\omega_{j,m,\upsilon}}\right \rangle_{H^s(\Omega)} |^2 \leq D_{\mathrm{w}}\|f\|_{H^s(\Omega)}^2 \fa f\in H^s(\Omega),
\end{align*}
for some $0<C_{\mathrm{w}} \leq D_{\mathrm{w}} <\infty$.

We start by proving the upper bound in \eqref{eq:FrameForHs}. After setting $(\varphi_n)_{n\in \N}:= {\mathcal{BSH}_{t,\tau}^s(\mathcal{W}; \phi, \psi, \psitilde, c)}$ we obtain that
\begin{align*}
\sum_{n\in \N}& |\left \langle f, 2^{-j_n s}\varphi_n\right \rangle_{H^s(\Omega)} |^2 \\
&= \sum_{(j,k,m,\iota) \in \Lambda_0} |\left \langle f, 2^{-j s}\psi_{j,k,m,\iota}\right \rangle_{H^s(\Omega)} |^2 + \sum_{{(j,m,\upsilon) \in \Theta_{t,\tau}}} |\left \langle f, 2^{-j s}{\omega_{j,m,\upsilon}}\right \rangle_{H^s(\Omega)} |^2 \\
&= \sum_{(j,k,m,\iota) \in \Lambda_0} |\left \langle f, 2^{-j s}\psi_{j,k,m,\iota}\right \rangle_{H^s(\R^2)} |^2 + \sum_{{(j,m,\upsilon) \in \Theta_{t,\tau}}} |\left \langle f, 2^{-j s}{\omega_{j,m,\upsilon}}\right \rangle_{H^s(\Omega)} |^2 \\
&\leq (B_{\mathrm{sh}} + D_{\mathrm{w}}) \| f\|_{H^s(\Omega)}^2,
\end{align*}
where we used that 
$$
\left \langle f, 2^{-j s}\psi_{j,k,m,\iota}\right \rangle_{H^s(\Omega)} = \left \langle f, 2^{-j s}\psi_{j,k,m,\iota}\right \rangle_{H^s(\R^2)}
$$
holds, since $\suppp \psi_{j,k,m,\iota} \subset \Omega$ for all $(j,k,m,\iota) \in \Lambda_0$.
We have just obtained the existence of an upper bound.

For the existence of the lower bound, let $f^{(\mathrm{bd})}:=f-f^{(\mathrm{int})}.$ 
Due to the boundedness of the synthesis operator of the frame $(2^{-js}\omega_{j,m,\upsilon})^d_{(j,m,\upsilon) \in \Theta}$ we have that $\| f \|_{H^s(\Omega)} \sim \| f^{(\mathrm{int})} \|_{H^s(\Omega)} + \| f^{(\mathrm{bd})} \|_{H^s(\Omega)}$ for all $f \in H^s(\Omega)$. 
By assumption on $\Omega$ there exists a bounded linear extension operator $\ExtOpMod: H^s(\Omega)\to H^s(\R^2)$ such that
$\ExtOpMod(f)_{|\Omega} = f$ for all $f\in H^s(\Omega)$ and ${\|\ExtOpMod\|_{H^s(\Omega) \to H^s(\R^2)} \leq {M_{\mathrm{ext}}}}$ for some ${M_{\mathrm{ext}}>0}$.

Based on this, we define the
following modified extension operator
\begin{align*}
\ExtOp: H^s(\Omega) &\to H^s(\R^2), \qquad f \mapsto \left\{\begin{array}{l l}
\ExtOpMod(f^{(\mathrm{bd})}) + f^{(\mathrm{int})} & \text{ on } \Omega, \\
\ExtOpMod(f^{(\mathrm{bd})}) & \text{ on } \R^2 \setminus \Omega. \\
\end{array}\right.
\end{align*}
{We first notice that $\ExtOp (f)_{| \Omega} = f$. {To obtain the well-definedness of $\ExtOp$ we continue by proving the boundedness of $\ExtOp$}. 

By invoking \eqref{eq:extOfInternals} we obtain the boundedness of the operator $\ExtOp$ from the following estimate:
\begin{align*}
\| \ExtOp (f) \|_{H^s(\R^2)}^2 &\lesssim \| E(f^{(\mathrm{bd})})\|_{H^s(\R^2)}^2 + \| f^{(\mathrm{int})} \|_{H^s(\Omega)}^2 \\
&\lesssim \| f^{(\mathrm{bd})}\|_{H^s(\Omega)}^2 + \| f^{(\mathrm{int})}\|_{H^s(\Omega)}^2 \lesssim \| f \|_{H^s(\Omega)}^2,
\end{align*}
where we have used the boundedness of $\ExtOpMod$ in the second inequality.}

Using the operator $\ExtOp(f)$, we obtain that
\begin{align}
A_{\mathrm{sh}}  \| f \|_{H^s(\Omega)}^2 \leq \ & A_{\mathrm{sh}} \| \ExtOp(f) \|_{H^s(\R^2)}^2 \nonumber\\
\leq \ & \sum_{(j,k,m,\iota) \in \Lambda} |\langle \ExtOp(f), 2^{-js}\psi_{j,k,m,\iota}\rangle_{H^s(\R^2)} |^2 \nonumber\\
 = \ & \sum_{(j,k,m,\iota) \in \Lambda_0} |\left \langle f, 2^{-js}\psi_{j,k,m,\iota}\right \rangle_{H^s(\Omega)} |^2 \nonumber\\
 &\quad +\sum_{(j,k,m,\iota) \in \Lambda_0^c} |\langle \ExtOp(f), 2^{-js}\psi_{j,k,m,\iota}\rangle_{H^s(\R^2)} |^2 \nonumber\\
 \leq \ & \sum_{(j,k,m,\iota) \in \Lambda_0} |\left \langle f, 2^{-js}\psi_{j,k,m,\iota}\right \rangle_{H^s(\Omega)} |^2 +\nonumber \\
& \quad+ 2\Bigg(\sum_{(j,k,m,\iota) \in \Lambda_0^c} |\langle \ExtOp(f)-\ExtOp(f^{(\mathrm{bd})}), 2^{-js}\psi_{j,k,m,\iota}\rangle_{H^s(\R^2)} |^2 \nonumber\\
&\qquad +\sum_{(j,k,m,\iota) \in \Lambda_0^c} |\langle \ExtOp(f^{(\mathrm{bd})}), 2^{-js}\psi_{j,k,m,\iota}\rangle_{H^s(\R^2)} |^2\Bigg)\nonumber\\
 = \ & \sum_{(j,k,m,\iota) \in \Lambda_0}|\left \langle f, 2^{-js}\psi_{j,k,m,\iota}\right \rangle_{H^s(\Omega)} |^2 + 2\left(\mathrm{I} + \mathrm{II}\right). \label{eq:theEquationAbove}
\end{align}
We next estimate $\mathrm{I}$ and $\mathrm{II}$, starting with $\mathrm{II}$. By using the frame property for $H^s(\Omega)$ of $(2^{-js}\omega_{j,m,\upsilon})_{(j,m,\upsilon) \in \Theta}$, we immediately
obtain the required estimate
\begin{align*}
\mathrm{II}\leq B_{\mathrm{sh}}\|\ExtOp(f^{(\mathrm{bd})})\|^2_{H^s(\R^2)} \leq  M_{\mathrm{ext}}^2 B_{\mathrm{sh}}\|f^{(\mathrm{bd})}\|^2_{H^s(\Omega)} \leq \frac{M_{\mathrm{ext}}^2 B_{\mathrm{sh}}}{C_{\mathrm{w}}} \sum_{{(j,m,\upsilon) \in \Theta_{\tau, t }}} |\left \langle f, 2^{-js}\omega_{j,m,\upsilon}\right \rangle_{H^s(\Omega)} |^2,
\end{align*}
where we used that the synthesis operator of the dual frame of $(2^{-js}\omega_{j,m,\upsilon})^d_{(j,m,
\upsilon) \in \Theta}$ is bounded in operator norm by $\frac{1}{\sqrt{C_{\mathrm{w}}}}$.
The existence of a positive lower bound follows by subtracting {$2\mathrm{I}$ on both sides of the inequality \eqref{eq:theEquationAbove}}, provided that
we can show
\begin{align} \label{eq:delta}
\mathrm{I} < A_{\mathrm{sh}}/2 \|f\|_{H^s(\Omega)}^2.
\end{align}
A lower frame bound is then given by
\begin{align*}%\label{eq:lowerFrameBoundNew}
(A_{\mathrm{sh}} - 2 \mathrm{I})C_{\mathrm{w}} / (2 M_{\mathrm{ext}}^2 B_{\mathrm{sh}}).
\end{align*}
Since by construction, $\ExtOp(f) - \ExtOp(f^{(\mathrm{bd})})  = \ExtOp(f-f^{(\mathrm{bd})}) = \ExtOp(f^{(\mathrm{int})})$, we can compute
{\allowdisplaybreaks
\begin{align*}
\mathrm{I} =& \sum_{(j,k,m,\iota) \in \Lambda_0^c} \left|\left \langle \ExtOp(f)-\ExtOp(f^{(\mathrm{bd})}), 2^{-js}\psi_{j,k,m,\iota}\right \rangle_{H^s(\R^2)} \right|^2 \\
=& \sum_{(j,k,m,\iota) \in \Lambda_0^c} 2^{-2js}\left|\left \langle \ExtOp(f^{(\mathrm{int})}), \psi_{j,k,m,\iota}\right \rangle_{H^s(\R^2)} \right|^2\\
=& \sum_{(j,k,m,\iota) \in \Lambda_0^c} 2^{-2js}\left|\left \langle f^{(\mathrm{int})}, \psi_{j,k,m,\iota}\right \rangle_{H^s(\R^2)} \right|^2\\
 = & \ \sum_{(j,k,m,\iota) \in \Lambda_0^c} \left| 2^{-js} \sum_{ {(j',m',\upsilon) \in \Theta_{t,\tau}^c}}   \left \langle f, {2^{-j' s}\omega_{j',m',\upsilon}} \right \rangle_{H^s(\Omega)}  \left \langle ({2^{-j' s} \omega_{j',m',\upsilon}})^d, \psi_{j,k,m,\iota}\right \rangle_{H^s(\R^2)} \right|^2\\
  = & \ \sum_{(j,k,m,\iota) \in \Lambda_0^c} \left|\sum_{ {(j',m',\upsilon) \in \Theta_{t,\tau}^c}} \left \langle f, {2^{-j' s}\omega_{j',m',\upsilon}} \right \rangle_{H^s(\Omega)}   \left \langle {(2^{-j's}\omega_{j',m',\upsilon})^d}, 2^{-j s}\psi_{j,k,m,\iota}\right \rangle_{H^s(\R^2)} \right|^2.
\end{align*}
}
Applying the Cauchy-Schwarz inequality then yields
\begin{align}
\mathrm{I} \leq & \hspace*{-0.4cm}
\sum_{(j,k,m,\iota) \in \Lambda_0^c}\left( \sum_{(j',m',\upsilon) \in \Theta_{t,\tau}^c} \left|\left \langle f, {2^{-j s}\omega_{j',m',\upsilon}} \right \rangle_{H^s(\Omega)}\right|^2\right)\cdot \nonumber \\
& \quad \cdot \left(  \sum_{ {(j',m',\upsilon) \in \Theta_{t,\tau}^c}} \left|\left \langle {(2^{-j's}\omega_{j',m',\upsilon})^d},2^{-j s} \psi_{j,k,m,\iota}\right \rangle_{H^s(\Omega)} \right|^2\right) \nonumber\\
{\leq} & D_{\mathrm{w}} \|f\|_{H^s(\Omega)}^2 \sum_{(j,k,m,\iota) \in \Lambda_0^c}  \sum_{(j',m',\upsilon) \in \Theta_{t,\tau}^c} \left|\left \langle (2^{-j's}{\omega_{j',m',\upsilon}})^d, 2^{-j s}\psi_{j,k,m,\iota}\right \rangle_{H^s(\Omega)} \right|^2. \label{eq:I}
\end{align}
Furthermore, by Proposition \ref{prop:CrossDecaySum} we have that for any given $\epsilon >0$ there exists a sufficiently large offset $t$ such that
\begin{align} \label{eq:I2}
\sum_{(j,k,m,\iota) \in \Lambda_0^c} \sum_{(j',m',\upsilon) \in \Theta_{t,\tau}^c}   \left|\left \langle (2^{-j' s}{\omega_{j',m',\upsilon}})^d, 2^{-j s}\psi_{j,k,m,\iota}\right \rangle_{H^s(\Omega)} \right|^2 < \epsilon.
\end{align}
Applying \eqref{eq:I2} to \eqref{eq:I} proves \eqref{eq:delta}, thereby completing the proof. 
\end{proof}

%------------------------------------------------------------------------------------------------------------------------------

%------------------------------------------------------------------------------------------------------------------------------
\section{Stability of boundary shearlet systems}\label{sec:caracH}
%------------------------------------------------------------------------------------------------------------------------------

In this section we will show that the analysis and synthesis operators of boundary shearlets provide stable maps between weighted coefficient sequences and $H^s(\Omega)$. First of all, we are interested in providing a characterization of Sobolev spaces $H^s(\Omega)$, $s \geq 0$. In other words, we aim to show that there exist $0<A\leq B<\infty$ such that
\begin{align}\label{eq:CharStart}
A \|f \|^2_{H^s(\Omega)} \leq \sum_{n\in \N} 2^{2 j_n s}| \langle f, \varphi_n\rangle_{L^2(\Omega)} |^2 \leq B \|f \|_{H^s(\Omega)}^2 \quad \text{for all } f\in H^s(\Omega),
\end{align}
where $\mathcal{BSH}_{t,\tau}^s(W;\phi,\psi,\psitilde,c)=:(\varphi_n)_{n\in\N}$ and $j_n$ corresponds to the scale of the underlying element, i.e.,  of $\varphi_n=\psi_{j_n,k_n,m_n,\iota_n}$ or $\varphi_n=\omega_{j_n,m_n,\upsilon_n}.$ 
The characterization \eqref{eq:CharStart} states that the analysis operator of a bounded shearlet system is bounded from above and below as a map from $H^s(\Omega)$ to the weighted sequence space $\ell^{2,s}$ where 
\begin{align*}
\|c\|_{2,s}:= \|(2^{j_n s}c_{n})_{n\in \N} \|_{\ell^2}, \text{ and }\ell_{2,s} := \{ c\in \ell^2: \|c\|_{2,s}<\infty\}.
\end{align*}
We will demonstrate Property \eqref{eq:CharStart} in Subsection \ref{subsec:Hsomega}.
Another mapping property, that is of considerable interest for the discretization of PDEs is the Gelfand frame property. This property requires a bounded synthesis operator as a map from $\ell^{2,s}$ to $H^s(\Omega)$ and a bounded analysis operator with respect to the $L^2$ dual frame as a map from $H^s(\Omega)$ to $\ell^{2,s}$. We will discuss this property in detail in Subsection \ref{sec:GelfProp}.

%------------------------------------------------------------------------------------------------------------------------------
\subsection{Characterization of $H^s(\Omega)$} \label{subsec:Hsomega}
%------------------------------------------------------------------------------------------------------------------------------

In order to obtain the characterization result of \eqref{eq:CharStart} we will require a similar property for shearlets on $\R^2$, i.e., property (S3) for $s \geq 0$. In fact, easily realizable conditions can be given that guarantee (S3) for a shearlet system. We remark that the next result, is only stated for $s\in\N,$ in \cite{DissPP}, but also holds for any $s\geq 0$ with the same proof.

\begin{theorem}\cite{DissPP} \label{thm:charThm}
Let $s\geq 0$, and $\phi, \psi\in \Lp{2}{(\R^2)}$ such that for all $\xi \in \R^2$
\begin{align*}
|\widehat \phi(\xi)| &\lesssim \min\{1, |\xi_1 |^{-\beta}\}\min\{1, |\xi_2 |^{-\beta}\}\nonumber,\\
|\widehat \psi(\xi)| &\lesssim \min\{1, |\xi_1 |^\alpha\}\min\{1, |\xi_1 |^{-\beta}\}\min\{1, |\xi_2|^{-\beta} \},
\end{align*}
with $\beta>4$ and $\alpha > \beta + s$ and let $\tilde{\psi}(x_1,x_2): = \psi(x_2,x_1)$ for all $x\in \R^2$. Further define 
\begin{align*}
\rho: \R^2 \to \R, \quad
\rho(\xi) := (1+|\xi_1|^2)^{\frac{1}{2}}(1+|\xi_2|^2)^{\frac{1}{2}} 
\end{align*}
and 
\begin{align*}
    \hat{\theta}(\xi) := \rho(\xi)^{-s} \widehat{\psi}(\xi),\qquad    \hat{\tilde{\theta}}(\xi) := \rho(\xi)^{-s} \widehat{\widetilde{\psi}}(\xi), \qquad
    \widehat{\mu}(\xi):= \rho(\xi)^{-s} \widehat{\varphi}(\xi), 
\end{align*}
for all $\xi \in \R^2$. Assume that there exists $\tilde{c}>0$ such that $\mathcal{SH}(\mu,\theta,\tilde{\theta},(c_1,c_2))$ forms a frame for $\Lp{2}{( \mathbb R^2)}$ for all $c_1,c_2\leq \tilde{c}$ with lower frame bound independent of $c_1,c_2$.
Then there exists $\bar{c}$ such that for all $c\leq \bar{c}$ we have that the system $(\psi_{j,k,m,\iota})_{(j,k,m,\iota) \in \Lambda}=\mathcal{SH}(\phi,\psi,\psitilde,(c,c))$ satisfies (S3).
\end{theorem}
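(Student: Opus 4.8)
The plan is to reduce the claimed norm equivalence (S3) to a pure $L^2$-frame statement by means of the isotropic Bessel potential, and then to import the assumed $L^2$-frame property of $\mathcal{SH}(\mu,\theta,\tilde{\theta},c)$ through the observation that all the systems involved are shearlet molecules attached to one and the same frequency tiling and related by uniformly elliptic symbols.

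First I would introduce the Bessel potential $W$ of order $s$, i.e.\ the Fourier multiplier with symbol $w(\xi)=(1+|\xi|^2)^{s/2}$. Since $w$ is real and positive, $W$ is a self-adjoint bijective isometry from $H^s(\R^2)$ onto $\Ltwo$ with $\|Wf\|_{\Ltwo}=\|f\|_{H^s(\R^2)}$. Writing $g:=Wf$, which then ranges over all of $\Ltwo$, and using self-adjointness,
\[
2^{js}\langle f,\psi_{j,k,m,\iota}\rangle_{\Ltwo}=\langle g,\,2^{js}W^{-1}\psi_{j,k,m,\iota}\rangle_{\Ltwo}=:\langle g,\sigma_{j,k,m,\iota}\rangle_{\Ltwo},
\]
where for $\iota=0$ one has $j=0$ and the weight is $1$. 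Consequently (S3) is equivalent to the assertion that $\Sigma:=(\sigma_{j,k,m,\iota})_{(j,k,m,\iota)\in\Lambda}$ is a frame for $\Ltwo$, and it is this $L^2$-frame property that I would establish.

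The bridge to the hypothesis is the Fourier-side identity
\[
\widehat{\sigma_{j,k,m,\iota}}(\xi)=m^{\iota}_{j,k}(\xi)\,\widehat{\theta_{j,k,m,\iota}}(\xi),\qquad m^{1}_{j,k}(\xi):=2^{js}(1+|\xi|^2)^{-s/2}\,\rho\big((S_kA_j)^{-T}\xi\big)^{s},
\]
with the analogue involving $S_k^T$ and $\widetilde A_j$ for $\iota=-1$, which follows at once from $\widehat\theta=\rho^{-s}\widehat\psi$ and the definitions. The geometric point is that on the frequency support of the $(j,k)$-shearlet one has $\eta:=(S_kA_j)^{-T}\xi$ confined to the compact, origin-avoiding support region of $\widehat\psi$ while $|\xi|\sim 2^{j}$; substituting $\xi=A_jS_k^{T}\eta$ then shows that $m^{\iota}_{j,k}$ is bounded above and below by constants independent of $(j,k,\iota)$ and, in the rescaled variable $\eta$, smooth with all derivatives bounded uniformly in $(j,k)$. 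Hence $\Sigma$ and the hypothesis system $(\theta_{j,k,m,\iota})$ are shearlet molecule systems of one and the same order. The exponents $\beta>4$ and $\alpha>\beta+s$ are precisely what secure this: they leave $\widehat\theta$ (and $\widehat\mu$, $\widehat{\widetilde\theta}$) with vanishing order $\alpha$ and decay order $\beta+s>3$ in each variable, so that these generators fall under Theorem \ref{theorem:shearletsframe} and the assumed frame bounds for $\mathcal{SH}(\mu,\theta,\tilde{\theta},c)$ are meaningful and, crucially, uniform for $c_1,c_2\le\tilde c$. The low-frequency part $\iota=0$ is handled by the same comparison and is in fact simpler, since it concerns only the bounded frequency region where $W$, $\rho^{-s}$ and the scaling generators are all comparable.

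With this structure in place, the Bessel bound for $\Sigma$ follows from the standard almost-orthogonality (Schur-test) estimates for shearlet molecules, using $\beta>4$ to make the Gram entries summable uniformly across the $\sim 2^{j/2}$ shears at each scale; choosing the sampling density $\bar c\le\tilde c$ small keeps the resulting constant controlled. The remaining, and genuinely hard, step is the transfer of the lower frame bound from $(\theta_{j,k,m,\iota})$ to $\Sigma$. One cannot simply compare coefficients term by term, because $\langle g,\sigma_{j,k,m,\iota}\rangle=\langle m^{\iota}_{j,k}(D)g,\theta_{j,k,m,\iota}\rangle$ involves an elliptically filtered version of $g$ that depends on $(j,k)$. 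My plan is an almost-diagonalization argument: estimate the off-diagonal decay of the cross-Gram matrix $\big(\langle\sigma_\lambda,\theta_{\lambda'}\rangle\big)_{\lambda,\lambda'}$ via the molecule decay, and then exploit the uniform ellipticity $m^{\iota}_{j,k}\gtrsim 1$ to show that the frame operator $S_\Sigma$ inherits a positive lower bound from that of $S_\theta$, which is furnished by the hypothesis. The main obstacle is exactly this estimate—proving that modifying shearlet molecules by shear- and scale-dependent, yet uniformly elliptic and smooth, Fourier symbols preserves the positive lower frame bound—and it is where the decay hypotheses $\beta>4$ and $\alpha>\beta+s$ are ultimately spent, through Schur bounds that must be uniform in the shearing index $k$.
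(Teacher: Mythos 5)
Your reduction via the Bessel potential $W$ is correct and parallels the paper's own starting point (testing the assumed $L^2$-frame against the function whose Fourier transform is $(1+|\cdot|^2)^{s/2}\widehat{f}$), and the Bessel/upper bound can indeed be carried out by direct decay estimates as you suggest (this is what Lemma \ref{lemm:0} and Lemma \ref{lemm:upperBdSobolev} do). The genuine gap is in the lower frame bound, and it originates exactly where you assert that the symbols $m^{1}_{j,k}(\xi)=2^{js}(1+|\xi|^2)^{-s/2}\rho\big((S_kA_j)^{-T}\xi\big)^{s}$ are ``bounded above and below by constants independent of $(j,k,\iota)$'' because $\eta=(S_kA_j)^{-T}\xi$ is ``confined to the compact, origin-avoiding support region of $\widehat\psi$.'' The theorem assumes only polynomial decay of $\widehat\psi$, not band-limitedness; indeed the systems this result is designed for are compactly supported in space, so $\widehat\psi$ never has compact support. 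One can check that $m^{1}_{j,k}$ \emph{is} uniformly bounded below (since $1+|\xi|^2\lesssim 2^{2j}\rho((S_kA_j)^{-T}\xi)^2$ for $|k|\lesssim 2^{j/2}$), but it is \emph{not} uniformly bounded above: $m^{1}_{j,k}(0)=2^{js}\to\infty$. The unboundedness sits precisely in the low-frequency tail of $\widehat{\psi_{j,k,m}}$, which is where all the difficulty of the theorem lives; so the ``molecules of the same order'' framework fails exactly where it is needed, and the subsequent almost-diagonalization step --- which you yourself flag as the main obstacle and do not carry out --- cannot be rescued by invoking uniform ellipticity, since that premise is false. There is also no citable general principle that scale- and shear-dependent elliptic symbol modifications preserve lower frame bounds; that is the very content to be proved, and note that $\langle g,\sigma_\lambda\rangle=\langle m_\lambda(D)g,\theta_\lambda\rangle$ involves a $\lambda$-dependent filtering of $g$, so the hypothesis cannot be applied termwise.

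For contrast, the paper's proof (quoted from \cite{DissPP}; the same scheme is executed in full for Theorem \ref{Thm:HsR2Frame} in Appendix \ref{sec:ProofHsR2}) avoids any such transfer principle by an exact Fourier-side computation: apply the assumed $L^2$-frame inequality of $\mathcal{SH}(\mu,\theta,\widetilde{\theta},c)$ to the function with Fourier transform $(1+|\cdot|^2)^{s/2}\widehat f$; truncate the generators in frequency to $[-\tfrac{1}{2c_1},\tfrac{1}{2c_1}]^2$ so that the sums over translates become exact Parseval identities for Fourier series on a cube; manipulate the resulting weights pointwise, using $(1+|\xi|^2)^{s}\lesssim\sum_{|\mathbf a|\le s}(2\pi)^{2|\mathbf a|}\xi^{2\mathbf a}$ and integration by parts to reassemble coefficients of the original $\psi$-system; and absorb all truncation remainders via the upper-bound lemma, whose constants carry positive powers of $c_1$, by taking $c$ small. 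This last step is where the hypothesis that the lower frame bound of the $\theta$-system is \emph{uniform} in $c_1,c_2\le\tilde c$ is actually spent --- a uniformity your sketch mentions but never deploys in the lower-bound argument.
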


We now use the results on $\R^2$ to obtain a characterization of $H^s(\Omega)$ by the analysis coefficients with respect to
a shearlet system on $\Omega$. At the end of this section, we briefly remark on a characterization of $H^s(\Omega)$ by dual frame coefficients, with a bit more elaborate discussion in Subsection \ref{sec:GelfProp}.

The following theorem states the main result of this subsection. 

\begin{theorem}\label{thm:AnalysisCharOfHOmega}
Let $s\geq 0, \alpha_{\mathrm{w}} > 1$, $\beta_{\mathrm{w}} > \alpha_{\mathrm{w}}+1$, $\beta_{\mathrm{sh}} > 1 + \alpha_{\mathrm{w}}$, $\tau > 0$ and $\epsilon >0$ such that ${((1-\epsilon)/\tau - 2)\alpha > 5/2.}$ Let further 
\begin{itemize}
\item $\mathcal{SH}(\phi, \psi, \psitilde, c)$ be a shearlet system satisfying (S1), (S3), (S3), and (S5) and
\item $\mathcal{W}$ be a boundary wavelet system satisfying (W1), (W2), the condition (W3) for $s=0$ and (W4)
\end{itemize}
with $\alpha_{\mathrm{sh}}, \beta_{\mathrm{sh}},\alpha_{\mathrm{w}}, \beta_{\mathrm{w}}$ as stipulated above. Then there exists some $T>0$ such that, for any $t\geq T$, there exist $0<A\leq B<\infty$
and the boundary shearlet system $(\varphi_n)_{n\in \N} = \mathcal{BSH}_{t,\tau}^0(\Wcal; \phi, \psi, \psitilde, c)$ satisfies \eqref{eq:CharStart}.
\end{theorem}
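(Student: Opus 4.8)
The plan is to follow the architecture of the proof of Theorem~\ref{thm:FrameProperty}, but now working throughout with the $L^2(\Omega)$ inner product together with the explicit scale weights $2^{2j_n s}$, and replacing the $H^s$-frame hypothesis by the two-sided characterization properties (S3) and (W2). Writing $(\varphi_n)_n = \mathcal{BSH}_{t,\tau}^0(\Wcal;\phi,\psi,\psitilde,c)$, the target sum splits as
\[
\sum_{n\in\N} 2^{2j_n s}|\langle f,\varphi_n\rangle_{L^2(\Omega)}|^2 = \sum_{(j,k,m,\iota)\in\Lambda_0} 2^{2js}|\langle f,\psi_{j,k,m,\iota}\rangle_{L^2(\Omega)}|^2 + \sum_{(j,m,\upsilon)\in\Theta_{t,\tau}} 2^{2js}|\langle f,\omega_{j,m,\upsilon}\rangle_{L^2(\Omega)}|^2,
\]
and I would establish the upper and lower bounds separately. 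For the upper bound I would extend $f$ to $\ExtOpMod(f)\in H^s(\R^2)$ with $\|\ExtOpMod(f)\|_{H^s(\R^2)}\leq M_{\mathrm{ext}}\|f\|_{H^s(\Omega)}$. Since $\suppp\psi_{j,k,m,\iota}\subset\Omega$ for $(j,k,m,\iota)\in\Lambda_0$, we have $\langle f,\psi_{j,k,m,\iota}\rangle_{L^2(\Omega)} = \langle \ExtOpMod(f),\psi_{j,k,m,\iota}\rangle_{L^2(\R^2)}$, so enlarging $\Lambda_0$ to $\Lambda$ and invoking the upper bound in (S3) controls the shearlet part by $B_{\mathrm{sh}}M_{\mathrm{ext}}^2\|f\|_{H^s(\Omega)}^2$; the wavelet part is bounded by enlarging $\Theta_{t,\tau}$ to $\Theta$ and using the upper bound in (W2). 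Hence the upper bound holds with $B = B_{\mathrm{sh}}M_{\mathrm{ext}}^2 + D_{\mathrm{w}}$.

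The lower bound is the substantive part. Using the biorthogonality (W1), I would split $f = f^{(\mathrm{int})} + f^{(\mathrm{bd})}$, where $f^{(\mathrm{int})} = \sum_{(j,m,\upsilon)\in\Theta_{t,\tau}^c}\langle f,\omega_{j,m,\upsilon}\rangle_{L^2(\Omega)}\,\omega^d_{j,m,\upsilon}$ is assembled from the ($s=0$) dual wavelets of (W3). By biorthogonality the wavelet analysis coefficients of $f^{(\mathrm{int})}$ and $f^{(\mathrm{bd})}$ are precisely the interior, respectively boundary, coefficients of $f$, so (W2) gives $\|f^{(\mathrm{int})}\|_{H^s(\Omega)}^2\sim\sum_{\Theta_{t,\tau}^c}2^{2js}|\langle f,\omega\rangle_{L^2}|^2$ and $\|f^{(\mathrm{bd})}\|_{H^s(\Omega)}^2\sim\sum_{\Theta_{t,\tau}}2^{2js}|\langle f,\omega\rangle_{L^2}|^2$. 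Moreover $f^{(\mathrm{int})}$ is supported away from $\partial\Omega$, since the duals indexed by $\Theta_{t,\tau}^c$ avoid the boundary tube, so its zero-extension $\tilde f^{(\mathrm{int})}$ lies in $H^s(\R^2)$. As in Theorem~\ref{thm:FrameProperty} I would then set $\ExtOp(f):=\ExtOpMod(f^{(\mathrm{bd})})+\tilde f^{(\mathrm{int})}$, which restricts to $f$ on $\Omega$ and satisfies $\|\ExtOp(f)\|_{H^s(\R^2)}\lesssim\|f\|_{H^s(\Omega)}$. Applying the lower bound in (S3) to $\ExtOp(f)$, using $\|f\|_{H^s(\Omega)}\leq\|\ExtOp(f)\|_{H^s(\R^2)}$, and splitting $\Lambda=\Lambda_0\cup\Lambda_0^c$ yields
\[
A_{\mathrm{sh}}\|f\|_{H^s(\Omega)}^2 \leq \sum_{(j,k,m,\iota)\in\Lambda_0}2^{2js}|\langle f,\psi_{j,k,m,\iota}\rangle_{L^2(\Omega)}|^2 + \sum_{(j,k,m,\iota)\in\Lambda_0^c}2^{2js}|\langle\ExtOp(f),\psi_{j,k,m,\iota}\rangle_{L^2(\R^2)}|^2,
\]
where on $\Lambda_0$ I again used that $\psi$ is supported in $\Omega$. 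Writing $\ExtOp(f)=\ExtOpMod(f^{(\mathrm{bd})})+\tilde f^{(\mathrm{int})}$ in the second sum, the $\ExtOpMod(f^{(\mathrm{bd})})$-contribution is dominated by the Bessel bound in (S3) by $B_{\mathrm{sh}}M_{\mathrm{ext}}^2\|f^{(\mathrm{bd})}\|_{H^s(\Omega)}^2$, hence by the boundary wavelet terms already present in the target sum.

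The remaining term $\mathrm{I}:=\sum_{\Lambda_0^c}2^{2js}|\langle\tilde f^{(\mathrm{int})},\psi_{j,k,m,\iota}\rangle_{L^2(\R^2)}|^2$ is where the offset $t$ enters. Expanding $\tilde f^{(\mathrm{int})}$ in the (zero-extended) dual wavelets and applying Cauchy--Schwarz in $\Theta_{t,\tau}^c$ after inserting the factors $2^{\pm j's}$ bounds $\mathrm{I}$ by $\big(\sum_{\Theta_{t,\tau}^c}2^{2j's}|\langle f,\omega\rangle_{L^2}|^2\big)$ times the weighted cross term $\sum_{\Lambda_0^c}\sum_{\Theta_{t,\tau}^c}2^{2(j-j')s}|\langle\omega^d_{j',m',\upsilon},\psi_{j,k,m,\iota}\rangle_{L^2(\R^2)}|^2$, the first factor being $\lesssim\|f\|_{H^s(\Omega)}^2$ by (W2). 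The main obstacle, and where I expect the real work to lie, is showing that this weighted $L^2$ cross-localization sum tends to $0$ as $t\to\infty$: this is the $s\geq 0$, $L^2$-weighted analogue of Proposition~\ref{prop:CrossDecaySum} (which is stated for $s\in\N$ and $H^s$ pairings), and I would prove it by the same Fourier-decay estimates, using the shearlet decay (S2), the decay of the $s=0$ dual wavelets from (W3), and the geometric separation of supports enforced by the tubular regions $\Gamma_{\tau(j-t)}$, obtaining a bound of order $2^{-2(1-\epsilon)\alpha_{\mathrm{w}}t}$. Granting this, choosing the offset $t\geq T$ sufficiently large renders $\mathrm{I}<(A_{\mathrm{sh}}/2)\|f\|_{H^s(\Omega)}^2$; subtracting the resulting $\mathrm{I}$-term to the left and absorbing the $f^{(\mathrm{bd})}$-contribution into the boundary wavelet coefficients then produces a strictly positive lower bound $A$, completing the argument.
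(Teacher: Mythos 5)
Your plan is the natural one, and it is essentially what the paper intends: the paper itself prints no proof of Theorem \ref{thm:AnalysisCharOfHOmega} (it defers to \cite[Thm.\ 4.2.6]{DissPP}, pointing --- somewhat confusingly --- at Theorem \ref{Thm:HsR2Frame}); structurally the relevant template is the proof of Theorem \ref{thm:FrameProperty}, and your proposal is precisely its weighted-$L^2$ rerun, with (S3)/(W2) replacing (S4)/(W3) and with the same extension operator $\ExtOp$, interior/boundary splitting, Bessel bound for the $\ExtOpMod(f^{(\mathrm{bd})})$ contribution, and a cross term killed by a large offset. One point in your favor: the step you call the ``main obstacle'' is easier than you expect. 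The hypothesis $((1-\epsilon)/\tau-2)\alpha_{\mathrm{w}}>5/2$ forces $\tau<(1-\epsilon)/2<1/2$, and, as in the first step of the proof of Proposition \ref{prop:CrossDecaySum}, any pair in $\Lambda_0^c\times\Theta_{t,\tau}^c$ with non-disjoint supports satisfies $j_{\mathrm{w}}>j_{\mathrm{sh}}/(2\tau)+t\geq j_{\mathrm{sh}}+t$; hence your weight $2^{2(j_{\mathrm{sh}}-j_{\mathrm{w}})s}\leq 2^{-2st}\leq 1$ on every pair that contributes, and the weighted cross sum is dominated by the unweighted one, i.e.\ by the $s=0$ instance of Proposition \ref{prop:CrossDecaySum} (whose proof goes through verbatim at $s=0$). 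No new Fourier analysis is required.

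The genuine gap is your use of (W2) on $f^{(\mathrm{int})}$ and $f^{(\mathrm{bd})}$. Assumption (W2) is a statement about functions \emph{already known} to lie in $H^{s}(\Omega)$; it does not by itself yield membership, and membership is load-bearing here, since you need $\tilde f^{(\mathrm{int})}\in H^s(\R^2)$ and $f^{(\mathrm{bd})}\in H^s(\Omega)$ before $\ExtOp(f)$ can be fed into the lower bound of (S3). Your $f^{(\mathrm{int})}$ is an $L^2$-convergent series in the $L^2$-dual wavelets, and the theorem only grants (W3) at $s=0$, with the $s$-independent decay exponent $\beta_{\mathrm{w}}>\alpha_{\mathrm{w}}+1$; for large $s$ the duals $\omega^d_{j,m,\upsilon}$ need not even belong to $H^s(\Omega)$, so ``$\|f^{(\mathrm{int})}\|_{H^s(\Omega)}^2\sim\sum_{\Theta_{t,\tau}^c}2^{2js}|\langle f,\omega\rangle_{L^2}|^2$'' is unjustified as written (the left side could be infinite). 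This is exactly the point where the proof of Theorem \ref{thm:FrameProperty} invokes (W3) \emph{for the given} $s$, namely the boundedness of the synthesis operator of the $H^s$ dual frame --- an assumption you (following the theorem's weaker hypothesis list) do not have. To close the gap you must import some dual-side regularity: e.g.\ assume $\omega^d_{j,m,\upsilon}\in H^s(\Omega)$ (true for the standard constructions), then the finite partial sums of $f^{(\mathrm{int})}$ lie in $H^s(\Omega)$, biorthogonality plus (W2) bounds their $H^s$ norms uniformly by $C_{\mathrm{w}}^{-1}\sum_{\Theta_{t,\tau}^c}2^{2js}|\langle f,\omega\rangle_{L^2(\Omega)}|^2$, and weak compactness of bounded sets in $H^s(\Omega)$ identifies the $L^2$ limit $f^{(\mathrm{int})}$ as an $H^s$ function satisfying the claimed equivalence. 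This rough edge is arguably inherited from the paper's own hypothesis list, but a complete write-up has to address it explicitly.
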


\begin{proof}
The proof is very similar to that of Theorem \ref{Thm:HsR2Frame} and is thus omitted here. It can be found in \cite[Theorem 4.2.6]{DissPP}.
\end{proof}

\subsection{Gelfand frame property}\label{sec:GelfProp}

In this subsection we will analyze a property that allows us to use boundary shearlet systems to adaptively solve elliptic partial differential equations.
In \cite{dahlke2007adaptive} it was observed that a highly beneficial property for the discretization of elliptic PDEs is the property of constituting a \emph{Gelfand frame}.
Let $s\in \N$ and let us consider the Gelfand triple $(H^s(\Omega), L^2(\Omega), H^{-s}(\Omega))$. Then, a Gelfand frame is a frame $(\varphi_n)_{n\in \N}$ for
$L^2(\Omega)$ satisfying the following two properties:
\begin{align*}
\left\|\sum_{n\in \N} c_n \varphi_n\right\|_{H^s(\Omega)}^2 \lesssim \left\|(2^{j_n s} c_n)_{n\in \N}  \right\|_{\ell^2}^2 \fa c \in \ell^2 \tag{GFA1}
\end{align*}
 and
\begin{align*}
\left\|(2^{j_n s}  \langle f, \varphi_n^d \rangle_{L^2(\Omega)})_{n\in \N}\right\|_{\ell^2}^2 \lesssim \|f\|_{H^s(\Omega)}^2 \fa f \in H^s(\Omega), \tag{GFA2}
\end{align*}
where the implicit constants are independent of $c$ and $f$, respectively.
We wish to mention, that the Gelfand frame property is a stronger requirement than
the characterization of $H^s(\Omega)$ guaranteed by Theorem \ref{thm:AnalysisCharOfHOmega}. 

We will demonstrate that the property (GFA1) can be easily obtained from (W3) and (S3) if the wavelet and shearlet frames are sufficiently \emph{localized}. We define the Gramian of the wavelet system $\mathcal{W}$ and a shearlet system $\mathcal{SH}(\phi, \psi, \tilde{\psi}, c)$ by
\begin{align*}
    G_{\mathrm{w}} : \  & \ell^2(\Theta) \to \ell^2(\Theta), \ G_{\mathrm{sh}} : \ell^2(\Lambda) \to \ell^2(\Lambda):\\
    (G_{\mathrm{w}}((c_\lambda)_{\lambda \in \Theta}))_\mu & := \sum_{\lambda \in \Theta}  c_\lambda \langle \omega_{\lambda}, \omega_{\mu} \rangle \text{ for } \mu \in \Theta, \\
    (G_{\mathrm{sh}}((c_\lambda)_{\lambda \in \Lambda}))_\mu & := \sum_{\lambda \in \Lambda}  c_\lambda \langle \psi_{\lambda}, \psi_{\mu} \rangle \text{ for }\mu \in \Lambda.
\end{align*}
If the the underlying frames admit a suffiently high localization, i.e., if $\langle \psi_{\lambda}, \psi_{\mu} \rangle$, $\langle \omega_{\lambda}, \omega_{\mu} \rangle$ decay quickly for $\mathrm{dist}(\lambda, \mu)$ growing, then it holds that 
\begin{align} \label{eq:localization}
\|G_{\mathrm{sh}}\|_{\ell^{2,s}(\Lambda) \to \ell^{2,s}(\Lambda)} < \infty\text{ and }\|G_{\mathrm{w}}\|_{\ell^{2,s}(\Theta) \to \ell^{2,s}(\Theta)} < \infty.
\end{align}  
For example, sufficient localization such that \eqref{eq:localization} holds is clearly given for $G_{\mathrm{w}}$ if the wavelet system forms an orthonormal basis. For shearlets, the localization and the mapping property \eqref{eq:localization} was analyzed in \cite[Sec. 3.2]{Gro2013}.

Assuming wavelet and shearlet systems are such that \eqref{eq:localization} is satisfied, we can establish (GFA1) directly. First of all, given $c \in \ell^{2,s}$ we can split the wavelet and shearlet parts by:
\begin{align*}
    \|\sum_{n\in \N} c_n \varphi_n\|_{H^s(\Omega)}^2 \lesssim \|\sum_{(j,m,\upsilon) \in \Theta_{t, \tau}} c_{j,m,\upsilon}^{\mathrm{w}} \omega_{j,m,\nu}\|_{H^s(\Omega)}^2 + \|\sum_{(j,k, m,\iota) \in \Lambda_0} c_{j,k, m,\iota }^{\mathrm{sh}} \psi_{j,k, m,\iota}\|_{H^s(\Omega)}^2.
\end{align*}
Invoking (S3) yields that 
\begin{align*}
&\left\|\sum_{(j,k,m,\iota)\in \Lambda} c_{j,k,m,\iota}^{\mathrm{sh}} \psi_{j,k,m,\iota} \right\|_{H^s(\Omega)}\\
\lesssim &\quad \left\| (\langle \sum_{(j,k,m,\iota)\in \Lambda} c_{j,k,m,\iota}^{\mathrm{sh}} \psi_{j,k,m,\iota}, \psi_{j',k',m',\iota'} \rangle_{L^2(\R^2)})_{(j',k',m',\iota')\in \Lambda} \right\|_{\ell^{2,s}(\Lambda)}\\
= &\| G_{\mathrm{sh}}(c^{\mathrm{sh}}) \|_{\ell^{2,s}(\Lambda)} \leq \| G_{\mathrm{sh}} \|_{\ell^{2,s}(\Lambda)\to \ell^{2,s}(\Lambda)} \|c^{\mathrm{sh}}\|_{\ell^{2,s}(\Lambda)}.
\end{align*}
A similar estimate for the wavelet part in addition to the fact that 
$$
\|c^{\mathrm{sh}}\|_{\ell^{2,s}(\Lambda)}^2 + \|c^{\mathrm{w}}\|_{\ell^{2,s}(\Lambda)}^2 = \|c\|_{\ell^{2,s}}^2
$$
yields (GFA1).

Unfortunately, the second property, (GFA2), is less accessible than (GFA1).
This is the case, since (GFA2) concerns an estimate on dual frame coefficients.
An explicit construction of any dual frame is, however, not available in our setting.
A concrete construction of a dual is not even known for the standard shearlet systems from Subsection \ref{sec:shearlets}. The only
known first construction can be found in \cite{KL15}. However, the resulting system has a different structure than a standard shearlet
system and is, in particular, highly redundant. Hence it is not clear how to even obtain characterizations of Sobolev spaces with the primal frame of the system of \cite{KL15}.} Therefore presenting such a characterization with
dual frame coefficients is beyond the scope of this paper.

To still provide an understanding of (GFA2) and to prove, that it most likely can be satisfied for shearlet systems, we include a numerical analysis of this property in Subsection \ref{sec:GelfPropNum}, where we will see that the shearlet systems of Shearlab possess the property (GFA2).

%------------------------------------------------------------------------------------------------------------------------------
\section{Approximation properties}\label{sec:ApproxPropOmega}
%------------------------------------------------------------------------------------------------------------------------------

Finally, we discuss approximation properties of the boundary shearlet systems. In Subsection \ref{sec:ApproxPropPlane} it was discussed
that shearlet systems on $\R^2$ yield optimally sparse approximations of cartoon-like functions. To obtain a similar result for the newly
introduced boundary shearlet systems, we first need to specify what we mean by cartoon-like functions on bounded domains $\Omega \subset \R^2$.

The attentive reader will have noticed that the definition of cartoon-like functions, i.e., functions in $\mathcal{E}^2(\nu)$, already focuses on functions with compact support in
$[0,1]^2$. For our purposes and according to anticipated applications in imaging science and numerical analysis of partial differential equations, this definition is too restrictive. In
fact, it does not include discontinuity curves, which not only touch the boundary of the domain but intersect it, in particular,
producing a point singularity on the boundary. This situation shall now be included.

The following definition makes these thoughts precise and generalizes
the previous notion of cartoon-like functions from Definition \ref{def:CartoonLikeRTwo}, see also Figure \ref{fig:cartoonLikeBdDom} for an illustration.

\begin{definition}\label{def:CartoonOnOmega}
Let $\nu>0$, $\Omega \subset \R^2$, $D \subset \R^2$, and $f = f_1 + \chi_D f_2$ with $f_i \in C^2(\R^2)$ and $\suppp f_i \subset
[-c_{supp}, c_{supp}]^2$ for some $c_{supp}>0$ and $i = 1,2$ such that $f(2c_{supp} \cdot-(1/2,1/2)) \in \mathcal{E}^2(2c_{supp} \cdot \nu)$.
Further, let $\#(\partial D \cap \partial \Omega) \leq M$ for some $M\in \N$  and let $\partial D$ and $\partial \Omega$ only intersect transversely. Then we call $P_{\Omega} f$ a \emph{cartoon-like function on
$\Omega$}, and denote the set of \emph{cartoon-like functions on $\Omega$} by $\mathcal{E}^2(\nu, \Omega)$.
\end{definition}

\begin{figure}[ht!]
\centering
\includegraphics[scale=0.6]{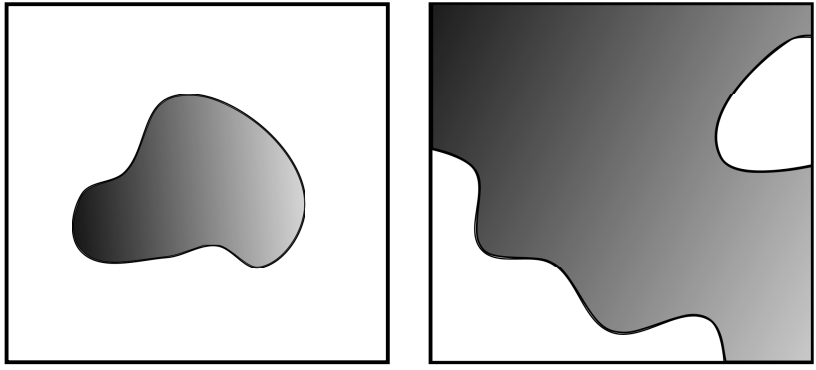}
\caption{Cartoon-like functions in $\mathcal{E}^2(\nu, [0,1]^2)$ for some $\nu >0$.} \label{fig:cartoonLikeBdDom}
\end{figure}

To estimate the error of the best $N$-term approximation, we use
the following well-known approach. Let $\mathcal{BSH}_{t,\tau}^0(\Wcal; \phi, \psi, \psitilde, c) =: (\varphi_n)_n$ be a boundary shearlet system which forms a frame for $L^2(\Omega)$, let
$f \in \mathcal{E}^2(\nu, \Omega)$, and let $(\theta_n(f))_{n\in \N}$ be the non-increasing rearrangement of $(|\left \langle
\varphi_n, f\right \rangle_{L^2(\Omega)}|^2)_{n\in \N}$. Then, by the frame inequality, we have
\begin{align*}
\|f - f_N\|_2^2 \lesssim \sum_{n\geq N} \theta_n(f) \quad \mbox{for all } N\in \N.
\end{align*}
Hence to obtain the optimal best $N$-term approximation rate of Theorem \ref{thm:compShearAreOptSparse}, we require the estimate
$$
\sum_{n\geq N} \theta_n(f) \lesssim N^{-2} \log(N)^3 \text{ as } N\to \infty.
$$
The reader will certainly have noticed that it is not initially clear that this is also the optimally possible rate for
the extended class of cartoon-like functions on $\Omega$ from Definition \ref{def:CartoonOnOmega}. But it is easy to see that a best $N-$term
approximation rate {\it faster} than $N^{-2}$ violates the optimality result of \cite{DCartoonLikeImages2001} for functions in
$\mathcal{E}^2(\nu)$, since each system on $[0,1]^2$ can be extended by $0$ to yield a system on $\R^2$. Recall that functions in
$\mathcal{E}^2(\nu)$ vanish outside $[0,1]^2$, hence the extended system would imply a faster than optimal approximation rate for
functions in $\mathcal{E}^2(\nu)$, a contradiction. Thus, the best $N$-term approximation rate is bounded from below by $N^{-2}$, but it is not clear whether this rate can actually be achieved. Theorem \ref{thm:OptSpaApproxOnBdDom} shows that this is indeed the
case up to a negligible log factor.

To proceed we need to first assemble some results about the approximation rates of wavelets and shearlets. The optimal approximation
rate of shearlets is guaranteed by Theorem \ref{thm:compShearAreOptSparse}. Moreover, let $(\omega_{j,m,\upsilon})_{(j,m,\upsilon)\in \Theta}$ be a wavelet system admitting (W3) with $s = 2$, i.e., 
\begin{align*}
\|f\|_{H^{2}(\Omega)}^2 \sim \sum_{j,m} 2^{4j}|\left \langle f, \omega_{j,m,\upsilon}\right \rangle_{L^2(\Omega)}|^2 \quad \text{ for all } f \in H^{2}(\Omega).
\end{align*}
Then, the result \cite[Thm. 39.2]{Coh2000} implies that, for any $f\in H^2(\Omega)$ and 
\begin{align}\label{eq:estForWavelets1}
\sum_{n\geq N} \theta_n^\omega(f) \lesssim N^{-2},
\end{align}
where $(\theta_n^{\omega}(f)_n)_{n\in \N}$ denotes a non-increasing
rearrangement of $(|\langle f, \omega_{j,m,\upsilon}\rangle_{L^2(\Omega)}|^2)_{(j,m,\upsilon)\in \Theta}.$

Based on these approximation results, we obtain the following theorem for the approximation rate of cartoon-like functions on $\Omega$
by the hybrid system of boundary shearlets.

\begin{theorem}\label{thm:OptSpaApproxOnBdDom}
Let $\Omega \subset \R^2$ with $\partial \Omega$ having finite length. Further, let $\phi, \psi, \psitilde$ fulfill the assumptions of Theorem \ref{thm:compShearAreOptSparse}, and
let $\mathcal{W}$ be a wavelet basis satisfying (W4) and \eqref{eq:estForWavelets1}. Further let $t > 0$, $\tau > 1/3$, and let $\mathcal{BSH}_{t,\tau}^0(\Wcal; \phi, \psi, \psitilde, c) =: (\varphi_n)_{n\in\N}$ be a boundary shearlet
system, which forms a frame for $L^2(\Omega)$. Then $\mathcal{BSH}_{t,\tau}^0(\Wcal; \phi, \psi, \psitilde, c)$ yields almost optimally sparse approximation for cartoon-like
functions on $\Omega$, i.e., for all $f\in \mathcal{E}^2(\nu,\Omega)$,
$$
\|f - f_N\|_{L^2(\Omega)}^2 \lesssim N^{-2}\log(N)^3 \quad \text{for } N\to \infty,
$$
where $f_N = \sum_{n\in I_n} \left\langle f, \varphi_n\right \rangle_{L^2(\Omega)} \varphi_n^d$ with $I_n$ containing the $N$ largest coefficients
$\left\langle f, \varphi_n\right \rangle_{L^2(\Omega)}$ in modulus and $(\varphi_n^d)_{n\in \N}$ is any dual frame of $(\varphi_n)_{n\in \N}$.
\end{theorem}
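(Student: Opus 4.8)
The plan is to reduce the statement to a tail bound on the rearranged analysis coefficients and then to treat the shearlet and wavelet parts of the boundary shearlet system separately. Since $(\varphi_n)_{n\in\N}$ forms a frame for $L^2(\Omega)$, the frame inequality recalled just above the theorem gives $\|f-f_N\|_{L^2(\Omega)}^2\lesssim\sum_{n\geq N}\theta_n(f)$, so it suffices to prove $\sum_{n\geq N}\theta_n(f)\lesssim N^{-2}\log(N)^3$. Writing $(\theta_n^{\mathrm{sh}}(f))$ and $(\theta_n^{\mathrm{w}}(f))$ for the non-increasing rearrangements of the squared coefficients coming from $\{\psi_{j,k,m,\iota}:(j,k,m,\iota)\in\Lambda_0\}$ and from $\mathcal{W}_{t,\tau}$, respectively, the key reduction is the elementary observation that keeping the $\lfloor N/2\rfloor$ largest coefficients from each family is an admissible $N$-term selection, whence $\sum_{n\geq N}\theta_n(f)\leq\sum_{n\geq N/2}\theta_n^{\mathrm{sh}}(f)+\sum_{n\geq N/2}\theta_n^{\mathrm{w}}(f)$. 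It then remains to bound these two tails by $N^{-2}\log(N)^3$ and $N^{-2}$, respectively.

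For the shearlet part I would exploit that, after the affine normalization built into Definition \ref{def:CartoonOnOmega}, the function $f=f_1+\chi_D f_2$ is (up to a rescaling which only affects the curvature bound and support size, hence the implied constants) a genuine cartoon-like function on $\R^2$ in the sense of $\mathcal{E}^2(\nu')$. Since every $\psi_{j,k,m,\iota}$ with $(j,k,m,\iota)\in\Lambda_0$ has support contained in $\Omega$, one has $\langle P_\Omega f,\psi_{j,k,m,\iota}\rangle_{L^2(\Omega)}=\langle f,\psi_{j,k,m,\iota}\rangle_{L^2(\R^2)}$, so the interior shearlet coefficients form a subset of the full shearlet coefficients of $f$ on $\R^2$. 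Theorem \ref{thm:compShearAreOptSparse} controls the tail of the full sequence by $N^{-2}\log(N)^3$, and passing to a subsequence can only decrease every entry of the rearrangement; hence $\sum_{n\geq N}\theta_n^{\mathrm{sh}}(f)\lesssim N^{-2}\log(N)^3$.

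For the wavelet part I would estimate $\langle P_\Omega f,\omega_{j,m,\upsilon}\rangle$ for $(j,m,\upsilon)\in\Theta_{t,\tau}$ scale by scale, splitting the selected wavelets at level $j$ into those whose support meets the singularity curve $\partial D$ and those whose support avoids it. For the latter, $P_\Omega f$ coincides on the support with one of the two globally $C^2$ (hence $H^2(\Omega)$) functions $f_1$ or $f_1+f_2$, so by \eqref{eq:estForWavelets1} their joint contribution to the tail is $\lesssim N^{-2}$. For the former, the crucial geometric input is that $\partial D$ meets $\partial\Omega$ transversally in at most $M$ points: once $j$ is large enough that the tube width $q_{\mathrm{sh}}2^{-\tau(j-t)}$ falls below the minimal distance between $\partial D$ and $\partial\Omega$ off the crossings, the portion of $\partial D$ inside $\Gamma_{\tau(j-t)}$ has length $\lesssim 2^{-\tau(j-t)}$, so only $\lesssim 2^{(1-\tau)j}$ wavelets at scale $j$ straddle it, each carrying a squared coefficient of order $2^{-2j}$. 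Counting the number of such coefficients exceeding a threshold $\epsilon$ produces $\lesssim\epsilon^{-(1-\tau)/2}$, a weak-$\ell^{1/3}$ bound precisely when $\tau>1/3$, which translates into $\sum_{n\geq N}\theta_n^{\mathrm{w}}(f)\lesssim N^{-2}$; the finitely many coarse scales affect only the constant. Feeding the two tails back into the $N/2$-split yields $\|f-f_N\|_{L^2(\Omega)}^2\lesssim N^{-2}\log(N)^3$.

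The main obstacle is the wavelet estimate near the boundary. Unlike shearlets, wavelets do not resolve curvilinear singularities at the optimal rate, so the entire argument hinges on the fact that the wavelets only have to handle the part of the singularity curve lying inside the shrinking tube $\Gamma_{\tau(j-t)}$. Making this rigorous requires the transversality and finiteness of the crossings (to bound the curve length inside the tube by the tube width), a clean separation of the coarse scales where the tube is still wider than the geometry permits, and the bookkeeping that interleaves the $\mathcal{O}(2^{-2j})$ straddling coefficients with the rapidly decaying smooth-region coefficients in the global rearrangement. It is exactly this counting that forces the threshold $\tau>1/3$ imposed in the hypotheses.
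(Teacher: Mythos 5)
Your proposal is correct and follows essentially the same route as the paper's proof: the frame-inequality reduction to a coefficient tail bound, the split of the rearranged sequence into shearlet and wavelet parts (your $N/2$--$N/2$ split versus the paper's $2N/3$--$N/3$ split is immaterial), Theorem \ref{thm:compShearAreOptSparse} for the interior shearlet coefficients, and for the boundary wavelets the same smooth/straddling dichotomy with the count $\lesssim 2^{(1-\tau)j}$ of straddling wavelets from the tube geometry and transversality, the $2^{-2j}$ coefficient bound, and an $\ell^{1/3}$-type summability argument (your weak-$\ell^p$ threshold count is equivalent to the paper's direct $\ell^{1/3}$ quasi-norm computation followed by the Stechkin Lemma), forcing $\tau>1/3$ exactly as in the paper.
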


\begin{proof}
Let $f= P_{\Omega}(f_1 + \chi_D f_2) \in \mathcal{E}^2(\nu, \Omega)$ and consider
\begin{align} \label{eq:approx}
\|f - f_N^*\|_{L^2(\Omega)}^2 \leq \sum_{n\geq N} \theta_n(f) \leq \sum_{n\geq \frac{2 N}{3}} \theta_n^\omega(f) +  \sum_{n\geq \frac{N}{3}} \theta_n^\psi(f)
 =: T_1 + T_2,
\end{align}
where $(\theta_n^\omega(f))_{n\in\N}$ is the non-increasing rearrangement of $(|\left \langle f, \varphi_n\right \rangle_{L^2(\Omega)}|^2)_{\varphi_n \in \mathcal{W}_{t,\tau}}$
and $(\theta_n^\psi(f))_{n\in\N}$ the non-increasing rearrangement of $(|\left \langle f, \varphi_n\right \rangle_{L^2(\Omega)}|^2)_{\varphi_n \in \mathcal{S}_0}$, where
$\mathcal{S}_0 = \{ \psi_{j,k,m,\iota}\, : \, (j,k,m,\iota) \in \Lambda_0\}$.

We now estimate $T_1$ and $T_2$. By Theorem \ref{thm:compShearAreOptSparse},
\begin{equation} \label{eq:approx1}
T_2 \lesssim N^{-2}\log(N)^3.
\end{equation}
The sum $T_1$ corresponding to the wavelet part can be split into two parts again. First, we denote by $(\theta_n^\omega(f)^{(s)})_{n\in \N}$ those elements of
$(\theta_n^\omega(f))_{n\in \N}$ where $|\left \langle \varphi_n, f\right \rangle_{L^2(\Omega)}|^2  = \theta_n^\omega$ and $\suppp \varphi_n \cap \partial D
\neq \emptyset$. These are the wavelet elements corresponding to the smooth part of the function $f$. Second, we label the remaining elements by $\theta_n^\omega(f)^{(ns)}$ for $n \in \N$. Using similar arguments about the set of
largest coefficients as before, we obtain that
\begin{align*} %\label{eq:approx2}
T_1 \leq \sum_{n\geq \frac{N}{3}} \theta_n^\omega(f)^{(s)} + \sum_{n\geq \frac{N}{3}} \theta_n^\omega(f)^{(ns)}.
\end{align*}
By \eqref{eq:estForWavelets1},
\begin{equation*}% \label{eq:approx3}
\sum_{n\geq \frac{N}{3}} \theta_n^\omega(f)^{(s)} \lesssim N^{-2} \quad \mbox{as } N\to \infty.
\end{equation*}
The wavelet coefficients corresponding to the non-smooth part of $f$ can be estimated by using that the boundary curve of $D$
intersects $\partial \Omega$ only finitely often. Therefore, due to the construction of the wavelet system $\mathcal{W}_{t,\tau}$, we obtain that only $\sim 2^{(1- \tau) j}< 2^{(2/3-\epsilon) j}$ wavelets intersect the boundary of $D$ where $0<\eps <\tau -1/3$. Furthermore, by the boundedness of $f$, we have  {$|\ \langle \omega_{j,m,\upsilon}, f \ \rangle_{L^2(\Omega)}|^2 \lesssim 2^{-2j}$}. Hence, we obtain
\begin{align*}
\sum_{n} (\theta_n^\omega(f)^{(ns)})^{\frac{1}{3}} \lesssim \sum_{j\in \N} 2^{(2/3-\epsilon) j} (2^{-2j})^{\frac{1}{3}} < \infty.
\end{align*}
Consequently, $(\theta_n^\omega(f)^{(ns)})_{n\in \N} \in \ell^{\frac{1}{3}}$ which, by the Stechkin Lemma (see for instance \cite{Dev1998}), yields
\begin{align} \label{eq:approx4}
\sum_{n \geq N} (\theta_n^\omega(f)^{(ns)}) \lesssim  N^{-2} \text{ for } N\to \infty.
\end{align}
Applying \eqref{eq:approx1}--\eqref{eq:approx4} to \eqref{eq:approx} proves the claim.
\end{proof}

\section{Numerical experiments}\label{sec:numerics}
%------------------------------------------------------------------------------------------------------------------------------
We now numerically analyze some of the properties of boundary shearlet systems. Since estimates for frame bounds as derived in
Theorem \ref{thm:FrameProperty} are typically far from being tight, in Subsection \ref{sec:FrameProp} we numerically  {compute}
the frame bounds. In Subsections \ref{sec:LocGram} and \ref{sec:GelfPropNum} we then analyze the localization properties of the
Gramian and the Gelfand frame property, which are features of boundary shearlet systems the theoretical analysis of which was far beyond
the scope of this paper.

For all numerical experiments, we choose a digitized version $\mathbf{\Omega}$ of the domain $\Omega = [0,1]^2$ as an $n\times n$ pixel image. We will specify the number $n$ at the relevant points later. Our implementation of boundary
shearlet systems then uses the MATLAB toolboxes \texttt{WaveLab} from \url{http://statweb.stanford.edu/~wavelab/} and
\texttt{ShearLab} from \url{http://www.shearlab.org} for the implementation of the analysis and synthesis operator of boundary
shearlet systems. In \texttt{WaveLab} and \texttt{ShearLab}, the wavelet and shearlet elements are not normalized. Since this is crucial for the setting of bounded domains, we normalize all these functions. For later use, let $\bfTPw$ and $\bfTPs$
denote the implementation of the analysis operators of the wavelet and shearlet systems after normalization.

The definition of boundary shearlet systems requires a hybrid system consisting of a subset of the wavelet system and a subset
of the shearlet system. Concerning the wavelet elements, we only choose those which are close to the boundary. Depending on the
offset of the boundary shearlet system, we construct a mask, $\mathbf{M_w}$, for the wavelet system that restricts the analysis
operator to a subset of the full wavelet system. In the sequel, we will always choose $\tau= 1/3$ and only vary the offset $t$. Similarly, we need to subsample the shearlet system provided by \texttt{ShearLab}.
In fact, \texttt{ShearLab} provides a non-subsampled shearlet transform, i.e., it computes the shearlet coefficients using the
full system
$$
\left\{\psi_{j,k,(S_k A_j m),\iota} \, : \, j\leq J, \  \iota \in \{-1,0,1\}, \ |k|\leq |\iota |2^{\lfloor j/2 \rfloor}, \ m \in c \Z^2, \right\}.
$$ On the other hand the theory requires us to restrict to the shearlet system
$$
\left\{ \psi_{j,k,m,\iota} \, : \, j\leq J, \  \iota \in \{-1,0,1\}, \ |k|\leq |\iota |2^{\lfloor j/2 \rfloor}, \ m \in c \Z^2, \right\}.
$$
Furthermore, we exclude shearlets from our system that intersect the boundary of $\Omega$. We incorporate all of these requirements
in a mask $\mathbf{M_s}$.

The analysis operator of the combined system is now given by
\begin{equation} \label{eq:analysisop}
\bfTP: = \left(\begin{array}{c}
 \mathbf{M_w }\bfTPw \\
\mathbf{M_s} \bfTPs
\end{array}\right).
\end{equation}
Using these operators, we derive an implementation of the synthesis operator of boundary shearlet systems by using
\[
\bfTPA = \mathbf{M_w T_{\Phi_w}^*} +  \mathbf{M_s T_{\Phi_s}^*}.
\]
The implementation of the frame operator is given by
\[
\mathbf{S} :=  \mathbf{T_{\Phi_w}^*M_w T_{\Phi_w}} + \mathbf{T_{\Phi_s}^*M_s T_{\Phi_s}}.
\]
To apply the inverse frame operator $\mathbf{S^{-1}}$, we use MATLAB's build-in \emph{conjugate gradients method}, \texttt{pcg}.

\newcommand{\FrameQuotientsN}{512}
\newcommand{\FrameQuotientsNrScales}{5}

%------------------------------------------------------------------------------------------------------------------------------
\subsection{Frame properties}\label{sec:FrameProp}
%------------------------------------------------------------------------------------------------------------------------------

We now compute the frame bounds of a boundary shearlet system for various offsets of the wavelet part. We pick an $\FrameQuotientsN
\times \FrameQuotientsN$ pixel  {domain} as a digitization of $\Omega$. The wavelet and shearlet systems are computed using $\FrameQuotientsNrScales$
scales. Since the optimal frame bounds $A$ and $B$ are the extremal points of the spectrum of the frame operator of the system, we
numerically compute them for this boundary shearlet system by computing the smallest and largest eigenvalues of $\mathbf{S}$. {For this task we used MATLAB's built-in method \texttt{eigs}.} In
Figure \ref{fig:FrameQuotient}, we depict the quotient $B/A$ for varying offset.

\begin{figure}[htb]
\centering
\includegraphics[width = 0.6\textwidth]{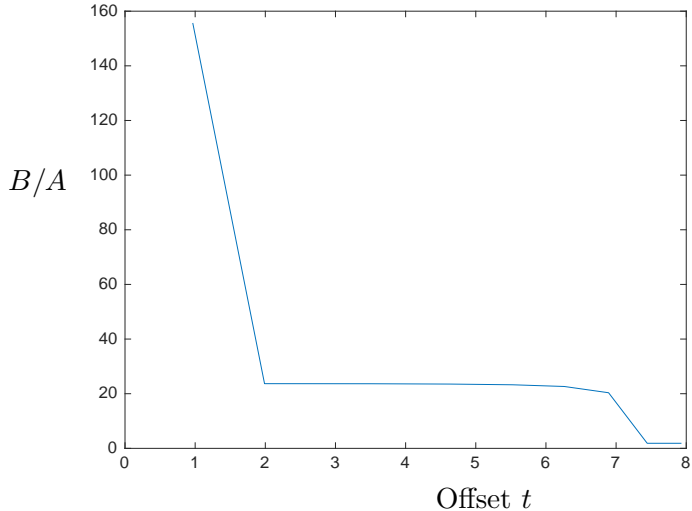}
\put(-120,0){Offset $t$}
\put(-280,120){$B/A$}
\caption{Quotient of the frame bounds for varying offset. One observes that for high offset the quotient becomes stable and explodes for decreasing offset.}
\label{fig:FrameQuotient}
\end{figure}

We observe that for larger offset the ratio of the frame bounds is somehow not too far from $1$, which provides us with reasonably
good condition numbers for the computation of $\mathbf{S}^{-1}$. In fact, the values of these quotients are comparable with those
of the full shearlet system used in \texttt{ShearLab} \cite{shearLab}. As expected, the frame property breaks down, when the
offset becomes too small. This is in accordance with Theorem \ref{thm:FrameProperty}.

%------------------------------------------------------------------------------------------------------------------------------
\subsection{Localization of the Gramian}\label{sec:LocGram}
%------------------------------------------------------------------------------------------------------------------------------

\newcommand{\GramianN}{256}
\newcommand{\GramianNrScales}{4}
\newcommand{\GramianNrDirections}{[1 \ 1 \ 2 \ 2]}

Using the analysis operator as defined in \eqref{eq:analysisop}, the Gramian of the boundary shearlet system is given by
\begin{align*}
\mathbf{G} := \bfTP \bfTPA.
\end{align*}
The linear operators $\bfTP$ and $\bfTPA$ are implemented using the \texttt{Spot} Toolbox, which is available at
\url{http://www.cs.ubc.ca/labs/scl/spot/index.html}. The matrix representation of the Gramian is shown in Figure
\ref{fig:GramianCombined}. It is clearly visible, that the Gramian of the boundary shearlet system has diagonal
structure. %However, as shown in Figure \ref{fig:GramianZoom} (with changed contrast), the off-diagonal structure

The figures were produced for a $\GramianN \times \GramianN$ digitization of $\Omega$, $\GramianNrScales$ scales
in the boundary shearlet system with the number of directions being $\GramianNrDirections$.

\begin{figure}[htb!]
\centering
\includegraphics[width = 0.95 \textwidth]{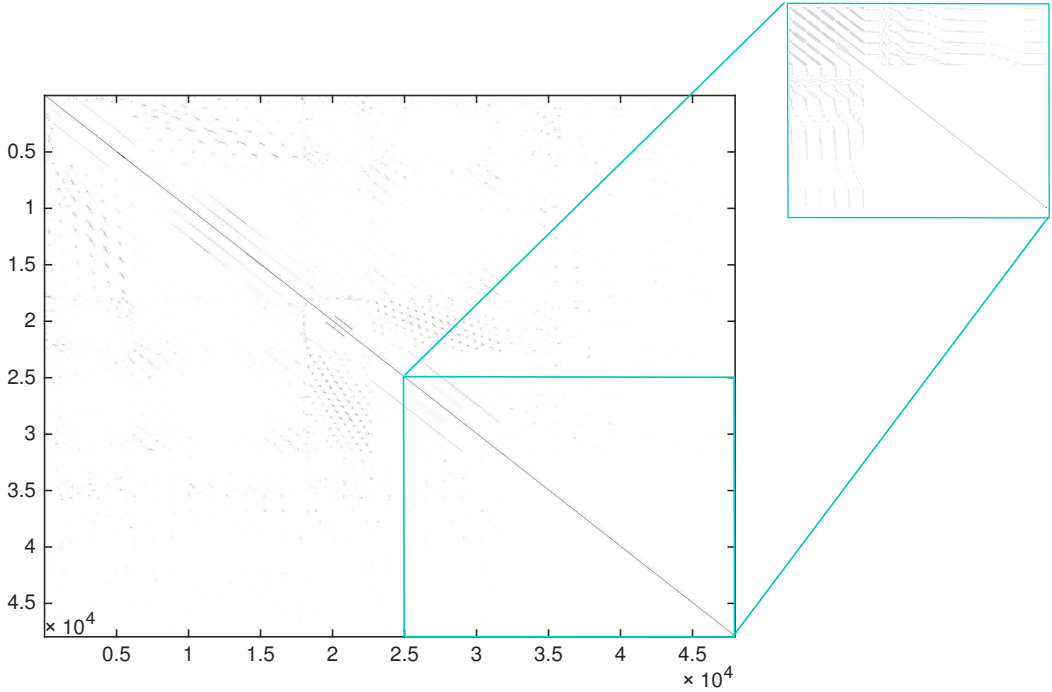}
\caption{Gramian of the boundary shearlet system. The part zoomed region is shown with changed contrast for better visualization of the different sparsity patterns of shear-shear, shear-wave and wave-wave. \label{fig:GramianCombined}}
\end{figure}

%\begin{figure}[htb!]
%\centering
%%\includegraphics[width = 0.9 \textwidth]{Gramian256_4_1122CombinedInv.pdf}
%\caption{Gramian of the combined dictionary. The part zoomed region is shown with changed contrast for better visualization, see also Figure \ref{fig:GramianZoom}.}
%\label{fig:GramianCombined}
%\end{figure}

%\begin{figure}[htb!]
%\centering
%\includegraphics[width = 0.9\textwidth]{Gramian_256_4_1122LowerContrastInv.pdf}
%
%\caption{Enlarged zoomed region from Figure \ref{fig:GramianCombined}. Notice the three different sparsity patterns for shear-shear, shear-wave and wave-wave.}
%\label{fig:GramianZoom}
%\end{figure}

%------------------------------------------------------------------------------------------------------------------------------
\subsection{Numerical analysis of the Gelfand frame property}\label{sec:GelfPropNum}
%------------------------------------------------------------------------------------------------------------------------------

\newcommand{\GelfandN}{1024}
\newcommand{\GelfandNrScales}{6}
\newcommand{\GelfandNrDirections}{[1 \ 1 \ 2 \ 2 \ 3]}

We have already discussed the Gelfand frame property in Subsection \ref{sec:GelfProp}. Indeed we have shown that the lower inequality, i.e., (GFA1) can be achieved. However, as we have also explained in the same subsection, our theoretical analysis is restricted to (GFA1) only since (GFA2) involves the dual frame which is not available. Nevertheless, property (GFA2) will be confirmed numerically in this subsection.

We will now proceed as follows, first, we require a numerically computable discretization of property (GFA2). For this, notice that employing the characterization
of $H^s(\Omega)$ by a boundary wavelet frame and an appropriate weight in the sense
that
$$
\|T_{\Phi_w} c\|^2_{H^s(\Omega)} \sim \|c\|^2_{\ell^{2,s}},
$$
one can obtain the following property that is equivalent to (GFA2):
\begin{align} \label{eq:contUpBound}
\|  \langle T_{\Phi_w} c, \varphi_n^d \rangle_{L^2(\Omega)}\|_{\ell^{2,s}}^2 \lesssim \|c\|_{\ell^{2,s}}^2 \fa c \in \ell^{2,s}.
\end{align}
To derive a discrete analogue of \eqref{eq:contUpBound}, we first let
\[
W: \ell^{2,s} \to \ell^2, (x_k)_k \mapsto (w_k\cdot x_k)_k
\]
be the canonicial isometry. Furthermore, since $\varphi_n^d = S^{-1} \varphi_n$, it follows that
\[
\langle T_{\Phi_w} c, \varphi_n^d \rangle_{L^2(\Omega)} = \langle S^{-1} T_{\Phi_w} c, \varphi_n \rangle_{L^2(\Omega)}.
\]
Using the canonical discretization of $W$ as a diagonal matrix, we obtain two matrices $\mathbf{W}$ and $\mathbf{W_w}$ adapted to the
indexing of the boundary shearlet system and the full wavelet system, respectively. The discrete analogue of \eqref{eq:contUpBound} now
takes the form
\begin{align*}
\|\mathbf{W} \bfTP(\mathbf{S^{-1}}\mathbf{T}_{\Phi_w} c)\|^2 \lesssim \|\mathbf{W_w} c\|^2 \fa c \in  \R^{n^2}.
\end{align*}
In order to examine this bound and check its validity for our boundary shearlet system in the discrete setting, we estimate
\begin{align*}
\max_{\|c\| = 1} \|\mathbf{W} \bfTP\mathbf{S^{-1}}\mathbf{T}_{\Phi_w} \mathbf{W_w}^{-1} c\|^2
\end{align*}
by computing the square-root of the largest eigenvalue of
\begin{align}\label{eq:theOperator}
\mathbf{W_w}^{-1} \mathbf{T}_{\Phi_w} \mathbf{S^{-1}} \bfTPA \mathbf{W}^2 \bfTP\mathbf{S^{-1}}\mathbf{T}_{\Phi_w} \mathbf{W_w}^{-1}.
\end{align}

%\begin{figure}[htb]
%\centering
%\includegraphics[width=0.7\textwidth]{}
%\put(-250,15){Offset $t$}
%\put(-100,15){Sobolev parameter $s$}
%\caption{Square root of largest singular value of $\mathbf{W} \bfTP\mathbf{S^{-1}}\mathbf{T}_{\Phi_w} \mathbf{W_w}^{-1}$ with varying weights $\mathbf{W}, \mathbf{W_w}$ and varying offset.}
%\label{fig:Offset-Weight-surface}
%\end{figure}
\changed{
\begin{table}[htb]
\centering
\begin{tabular}{| l | c | c | c | c | c|} \hline
  Offset & $s$ = 0 &  $s$ = 0.5 & $s$= 1 & $s$= 1.5 \\
  \hline
    7.31   & 3.77  &  3.77  &  3.77 &   3.77\\
    6.59   & 3.78  &  3.78  &  3.78 &   3.78\\
    5.72   & 3.78  &  3.78  &  3.78 &   3.78\\
    4.63   & 3.79  &  3.78  &  3.78 &   3.78\\
    3.18   & 3.79  &  3.79  &  4.09 &   6.11\\
    0.97   & 3.79  &  3.79  &  4.15 &   6.81\\
    0.35   & 3.79  &  3.79  &  4.35 &  13.10\\
\hline
\end{tabular}
\caption{Square root of largest singular values of $\mathbf{W} \bfTP\mathbf{S^{-1}}\mathbf{T}_{\Phi_w} \mathbf{W_w}^{-1}$ for varying offset and Sobolev parameter $s$.}\label{Table1}
\end{table}}

In Table \ref{Table1}, we depict the square-root of the largest eigenvalue of the operator \eqref{eq:theOperator}
with different weights $\mathbf{W}, \mathbf{W_w}$ and different offset for $n = 1024$. 
%The precise values can be found in Table
\ref{Table1}. In this numerical experiment, the weights are chosen as $2^{j s}$, where $j$ describes the scale of the frame element,
both of wavelet and shearlet, and $s$ is a parameter that takes values between $0$ and $1.5$. The shearlet and wavelet systems were
constructed with $\GelfandNrScales$ scales.

%\medskip

In Table \ref{Table1}, one can observe that, although the largest eigenvalues
of {\eqref{eq:theOperator}} increase with growing Sobolev parameter, they do so
remarkably slow if the offset is sufficiently high. Thus we conclude that our experiments demonstrate the proper mapping properties of the dual frame.

%------------------------------------------------------------------------------------------------------------------------------
\section*{Acknowledgements}
%------------------------------------------------------------------------------------------------------------------------------

P. Petersen would like to thank Kristof Schr\"oder and Massimo Fornasier for various discussions on related topics. Furthermore,
P. Petersen expresses his gratitude to Massimo Fornasier and the Technische Universit\"at M\"unchen for the hospitality during
P. Petersen's research visit. Parts of this work was also done when J. Ma visited the Department of Applied Mathematics and
Theoretical Physics of the University of Cambridge, and he is grateful for its hospitality. J. Ma, P. Petersen and M. Raslan acknowledge
support from the DFG Collaborative Research Center TRR 109 ``Discretization in Geometry and Dynamics''; they are also supported
by the Berlin Mathematical School. 

G. Kutyniok acknowledges support by the Einstein Foundation Berlin, by the Einstein Center
for Mathematics Berlin (ECMath), by Deutsche Forschungsgemeinschaft (DFG) SPP 1798, by the DFG Collaborative Research Center TRR
109 ``Discretization in Geometry and Dynamics'', by the DFG Research Center {\sc Matheon} ``Mathematics for key technologies''
in Berlin, and by the European Commission-Project DEDALE (contract no.665044) within the H2020 Framework Program. Parts of this work was accomplished while G. Kutyniok was visiting ETH Z\"urich. She is grateful to the Institute for
Mathematical Research (FIM) and the Seminar for Applied Mathematics (SAM) for their hospitality and support during this visit.
%\section*{References}
%\normalsize
%\bibliographystyle{plain}
%\bibliography{references.bib}
\small
\bibliographystyle{abbrv}
\bibliography{Ref_Shearlets_On_Bounded_Domains}

\appendix
\section{$H^s(\R^2)$ Frame property of a reweighted shearlet system} \label{sec:ProofHsR2}

This section shall be concerned with the proof of Theorem \ref{Thm:HsR2Frame}. The proof is split into Lemma \ref{lemm:upperBdSobolev} for the upper frame bound and Lemma \ref{lemm:lowerBdSobolev} for the lower frame bound. 

We use the following lemma to estimate weighted $L^2(\R^2)$ - frame coefficients by the $H^s(\R^2)$ norm of a function $f$.
\begin{lemma}[\cite{DissPP}, Lemma 2.4.9.] \label{lemm:0}
Let $s \in \N$, $\tilde{C}>0,$ $\beta_{\mathrm{sh}}>3$ and $\alpha_{\mathrm{sh}} > \beta_{\mathrm{sh}} + s$. Further, let $\phi, \psi, \psitilde \in L^2(\R^2)$ such that for almost all $\xi \in \R^2$ there holds  
\begin{align*}%\label{eq:obeyThis}
|\hat \phi(\xi)| &\leq \tilde{C} \min\{1, |\xi_1 |^{-\beta_{\mathrm{sh}}}\}\min\{1, |\xi_2 |^{-\beta_{\mathrm{sh}}}\}\nonumber\\
|\hat \psi(\xi)| &\leq \tilde{C} \min\{1, |\xi_1 |^{\alpha_{\mathrm{sh}}}\}\min\{1, |\xi_1 |^{-\beta_{\mathrm{sh}}}\}\min\{1, |\xi_2 |^{-\beta_{\mathrm{sh}}}\}  \\
|\hat {\tilde{\psi}}(\xi)| &\leq \tilde{C} \min\{1, | \xi_2 |^{\alpha_{\mathrm{sh}}}\}\min\{1, |\xi_1 |^{-\beta_{\mathrm{sh}}}\}\min\{1, |\xi_2 |^{-\beta_{\mathrm{sh}}}\}.
\nonumber
\end{align*}
Then there exists a constant $C>0$ depending only on $s, \alpha_{\mathrm{sh}}$ and $\beta_{\mathrm{sh}}$ such that $\mathcal{SH}(\phi, \psi, \tilde{\psi}, c)$ satisfies
$$
\left\|\left(2^{js}\langle f, \psi_{j,k,m,\iota}\rangle_{L^2(\R^2)}\right)_{(j,k,m,\iota)\in \Lambda} \right\|_{\ell^2(\Lambda)}\leq \frac{C \tilde{C} }{\sqrt{|\det(M_c)|}} \|f\|_{H^s(\R^2)} \fa f \in H^s(\R^2).
$$
\end{lemma}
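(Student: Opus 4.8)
The plan is to absorb both the weight $2^{js}$ and the $H^s(\R^2)$-norm into a modified shearlet system and thereby reduce the assertion to an $L^2(\R^2)$ Bessel estimate of exactly the kind already established in Theorem \ref{theorem:shearletsframe}. Given $f\in H^s(\R^2)$, I would set $\widehat{h}(\xi) := (1+|\xi|^2)^{s/2}\widehat{f}(\xi)$, so that $\|h\|_{L^2(\R^2)} = \|f\|_{H^s(\R^2)}$, and rewrite, using Plancherel's theorem,
\[
2^{js}\langle f, \psi_{j,k,m,\iota}\rangle_{L^2(\R^2)} = \big\langle h,\ \sigma_{j,k,m,\iota}\big\rangle_{L^2(\R^2)},\qquad \widehat{\sigma_{j,k,m,\iota}}(\xi) := 2^{js}(1+|\xi|^2)^{-s/2}\,\widehat{\psi_{j,k,m,\iota}}(\xi).
\]
It then suffices to show that the modified family $(\sigma_{j,k,m,\iota})_{(j,k,m,\iota)\in\Lambda}$ is a Bessel sequence in $L^2(\R^2)$ with bound controlled by $C^2\tilde C^2/|\det(M_c)|$, $C$ depending only on $s,\alpha_{\mathrm{sh}},\beta_{\mathrm{sh}}$; applying that bound to $h$ yields the claim, with the square root of the Bessel bound producing the factor $C\tilde C/\sqrt{|\det(M_c)|}$.

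The key is that each $\sigma_{j,k,m}$ retains the affine shearlet structure $\sigma_{j,k,m}=2^{3j/4}\Psi^{(j,k)}(S_kA_j\,\cdot\,-M_c m)$ with a scale- and shear-dependent generator whose Fourier transform still satisfies a shearlet-type decay uniformly in $(j,k)$. Passing to $\eta=(S_kA_j)^{-T}\xi$ gives $\widehat{\Psi^{(j,k)}}(\eta)=2^{js}\big(1+|(S_kA_j)^T\eta|^2\big)^{-s/2}\widehat{\psi}(\eta)$, and since $|(S_kA_j)^T\eta|^2\ge 2^{2j}\eta_1^2$ the prefactor is bounded by $2^{js}(1+2^{2j}\eta_1^2)^{-s/2}=(2^{-2j}+\eta_1^2)^{-s/2}\le \min\{2^{js},|\eta_1|^{-s}\}$. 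Inserting this into the hypothesis on $\widehat{\psi}$ and splitting into the regimes $|\eta_1|\ge 1$, $2^{-j}\le|\eta_1|\le 1$, and $|\eta_1|\le 2^{-j}$ yields the uniform estimate
\[
|\widehat{\Psi^{(j,k)}}(\eta)| \lesssim \tilde C\,\min\{1,|\eta_1|^{\alpha_{\mathrm{sh}}-s}\}\,\min\{1,|\eta_1|^{-\beta_{\mathrm{sh}}}\}\,\min\{1,|\eta_2|^{-\beta_{\mathrm{sh}}}\}.
\]
This is precisely where the hypothesis $\alpha_{\mathrm{sh}}>\beta_{\mathrm{sh}}+s$ enters: in the low-frequency regime $|\eta_1|\le 2^{-j}$ the potentially large factor $2^{js}$ is exactly compensated by the vanishing-moment decay $|\eta_1|^{\alpha_{\mathrm{sh}}}$, leaving a residual exponent $\alpha_{\mathrm{sh}}-s>\beta_{\mathrm{sh}}>3$, so that $\Psi^{(j,k)}$ still obeys the decay hypotheses of Theorem \ref{theorem:shearletsframe} with $\alpha$ replaced by $\alpha_{\mathrm{sh}}-s$.

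With the uniform decay secured I would invoke the Bessel part of Theorem \ref{theorem:shearletsframe}. Because the generators $\Psi^{(j,k)}$ depend on $(j,k)$ one cannot cite the theorem verbatim; instead one reruns its proof, which estimates the frame operator scale-by-scale using only the pointwise decay of the generators together with the lattice $M_c\Z^2$, and whose output depends on the generators solely through their (uniform) decay constants. This delivers a Bessel bound of the form $C^2\tilde C^2/|\det(M_c)|$ with $C=C(s,\alpha_{\mathrm{sh}},\beta_{\mathrm{sh}})$. The coarse-scale term $\iota=0$ is trivial, since there $2^{js}=1$ and the factor $(1+|\xi|^2)^{-s/2}\le 1$ only improves the decay of $\widehat{\phi}$, while the term $\iota=-1$ is handled identically after interchanging the roles of $\xi_1$ and $\xi_2$, using the hypothesis on $\widehat{\psitilde}$ and the transposed scaling $\widetilde{A}_j$, $S_k^T$. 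Summing the three contributions and applying the resulting $L^2$-Bessel bound to $h$ completes the argument. The main obstacle I anticipate is exactly the uniform-in-scale decay estimate for $\Psi^{(j,k)}$ above, together with the (routine but essential) observation that the Bessel-bound computation is insensitive to the scale-dependence of the generators; once these are in place, the remaining steps amount to bookkeeping of the constants $C$ and $\tilde C$.
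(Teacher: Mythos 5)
Your proof is correct. A preliminary remark: the paper itself contains no proof of Lemma \ref{lemm:0} --- it is imported verbatim from \cite{DissPP} (Lemma 2.4.9) --- so there is no internal argument to compare against line by line; your proposal must be judged on its own merits, and it holds up. I checked the two substantive steps. First, the structural identity $\sigma_{j,k,m}=2^{3j/4}\Psi^{(j,k)}(S_kA_j\,\cdot\,-M_cm)$ with $\widehat{\Psi^{(j,k)}}(\eta)=2^{js}\bigl(1+|(S_kA_j)^T\eta|^2\bigr)^{-s/2}\widehat{\psi}(\eta)$ is exactly right, and since $(S_kA_j)^T\eta=(2^j\eta_1,\,2^{j/2}(k\eta_1+\eta_2))$ one indeed has $|(S_kA_j)^T\eta|^2\ge 2^{2j}\eta_1^2$; your three-regime computation then gives the uniform envelope
\begin{align*}
|\widehat{\Psi^{(j,k)}}(\eta)|\le \tilde C\,\min\{1,|\eta_1|^{\alpha_{\mathrm{sh}}-s}\}\,\min\{1,|\eta_1|^{-\beta_{\mathrm{sh}}}\}\,\min\{1,|\eta_2|^{-\beta_{\mathrm{sh}}}\},
\end{align*}
with the hypothesis $\alpha_{\mathrm{sh}}>\beta_{\mathrm{sh}}+s$ entering precisely where you say, so that the effective exponents $\alpha_{\mathrm{sh}}-s>\beta_{\mathrm{sh}}>3$ meet the hypotheses of Theorem \ref{theorem:shearletsframe}. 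Second, the step you rightly flag as the only non-citable one --- applying the Bessel part of Theorem \ref{theorem:shearletsframe} to a family whose generator varies with $(j,k)$ --- is justified for the reason you give: the Daubechies-type proof of that theorem controls the Bessel bound through quantities of the form $\mathrm{ess\,sup}_\xi \sum_{j,k}|\widehat{\psi}(S_k^{-T}A_j^{-1}\xi)|\,|\widehat{\psi}(S_k^{-T}A_j^{-1}\xi+M_c^{-1}n)|$, in which the generator enters only through its pointwise modulus, so a uniform envelope over $(j,k)$ reproduces the bound $C^2\tilde C^2/|\det(M_c)|$ with $C=C(s,\alpha_{\mathrm{sh}},\beta_{\mathrm{sh}})$; the $\iota=0$ and $\iota=-1$ branches are handled as you describe. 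Your route is also consistent in spirit with the envelope manipulations the paper does carry out (Lemma \ref{lem:ShearFreqEnv} and the proofs of Lemmas \ref{lemm:upperBdSobolev} and \ref{lemm:lowerBdSobolev}). One cosmetic point: $\|h\|_{L^2(\R^2)}=\|f\|_{H^s(\R^2)}$ is an identity only for the Bessel-potential form of the Sobolev norm; for the derivative-sum form used elsewhere in the paper it is an equivalence with constants depending on $s$, which is harmless here since $C$ is allowed to depend on $s$.
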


We continue to give the upper frame bound for the weighted shearlet system in $H^s(\R^2)$.

\begin{lemma}\label{lemm:upperBdSobolev}
For $s\in\N$ let $\phi^1, \psi^1 \in L^2(\R)$ be such that for some $\tilde{C} \geq 0$, $\alpha_{\mathrm{sh}} - s> \beta_{\mathrm{sh}} > 3$, and all $0\leq r_1,r_2 \leq 2s$, 
\begin{align}\label{eq:decayAssumptions}
    \widehat{D^{r_1}\phi^1}(\xi) &\leq \sqrt{\tilde{C}} \min \{1,|\xi|^{-\beta_{\mathrm{sh}}} \}, \text{ for all } \xi \in \R, \nonumber\\
    \widehat{D^{r_2}\psi^1}(\xi) &\leq \sqrt{\tilde{C}} \min \{1,|\xi|^{\alpha_{\mathrm{sh}}} \} \min \{1,|\xi|^{-\beta_{\mathrm{sh}}} \}, \text{ for all } \xi \in \R.
\end{align}
We further denote $\phi = \phi^1 \otimes \phi^1$ and $\psi = \psi^1 \otimes \phi^1$ and $\tilde{\psi}(x_1, x_2) := \psi(x_2, x_1)$ for all $x\in \R^2$.
Then, there exists some $B>0$ depending only on $\alpha_{\mathrm{sh}}, \beta_{\mathrm{sh}}$ and $s$ such for $(\psi_{j,k,m,\iota})_{(j,k,m,\iota)\in \Lambda} = \mathcal{SH}(\phi, \psi, \tilde{\psi}, (c_1,c_2))$ we have the estimate
\begin{align}\label{eq:theUpperHsEstimate}
\sum_{(j,k,m,\iota)\in \Lambda} |\langle f, 2^{-js} \psi_{j,k,m,\iota} \rangle_{H^s(\R^2)}  |^2 \leq B \frac{\tilde{C}}{|\det(M_c)|} \|f\|_{H^s(\mathbb{R}^2)}^2
\end{align}
for all $f \in H^s(\R^2)$.
\end{lemma}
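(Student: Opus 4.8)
The plan is to move the Sobolev weight off the test functions and onto the signal, thereby reducing the weighted $H^s$ Bessel estimate to an ordinary $L^2$ Bessel estimate for a modified shearlet family. Let $\Jcal^s$ denote the Bessel potential, i.e. the Fourier multiplier with symbol $(1+|\xi|^2)^{s/2}$, so that $\langle f,g\rangle_{H^s(\R^2)}=\langle \Jcal^s f,\Jcal^s g\rangle_{L^2(\R^2)}$ and $\|\Jcal^s f\|_{L^2(\R^2)}=\|f\|_{H^s(\R^2)}$. Applying this with $g=2^{-js}\psi_{j,k,m,\iota}$ yields
\begin{equation*}
\langle f,\, 2^{-js}\psi_{j,k,m,\iota}\rangle_{H^s(\R^2)}=\langle \Jcal^s f,\; 2^{-js}\Jcal^s\psi_{j,k,m,\iota}\rangle_{L^2(\R^2)},
\end{equation*}
so that \eqref{eq:theUpperHsEstimate} is equivalent to the statement that $(2^{-js}\Jcal^s\psi_{j,k,m,\iota})_{(j,k,m,\iota)\in\Lambda}$ is a Bessel sequence in $L^2(\R^2)$ with bound $\lesssim \tilde C/|\det(M_c)|$, applied to the function $\Jcal^s f$ whose $L^2$ norm is exactly $\|f\|_{H^s(\R^2)}$.

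The first step is to recognize the modified family as a shearlet-type system of the form $2^{3j/4}\Psi_{j,k}(S_kA_j\cdot-M_cm)$, but now with generators $\Psi_{j,k}$ depending on $(j,k)$. Working in the frequency domain with $\eta=(S_kA_j)^{-T}\xi$, so that $\xi_1=2^j\eta_1$ and $\xi_2=2^{j/2}(k\eta_1+\eta_2)$ on the horizontal cone $\iota=1$, a direct computation gives $\widehat{\Psi_{j,k}}(\eta)=\sigma_{j,k}(\eta)\,\widehat{\psi}(\eta)$ (up to a unimodular phase) with
\begin{equation*}
\sigma_{j,k}(\eta)=2^{-js}\bigl(1+2^{2j}\eta_1^2+2^j(k\eta_1+\eta_2)^2\bigr)^{s/2}.
\end{equation*}
The technical heart of the argument is the \emph{uniform} bound $\sigma_{j,k}(\eta)\lesssim 1+|\eta_1|^s+|\eta_2|^s$, with implied constant independent of $j,k$. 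This is precisely where the anisotropy is exploited: the factor $2^{-js}$ exactly cancels the contribution of $2^{2j}\eta_1^2$ down to $|\eta_1|^s$, while the shear term obeys $2^{-js/2}|k\eta_1+\eta_2|^s\lesssim|\eta_1|^s+|\eta_2|^s$ thanks to $|k|\le\lceil 2^{j/2}\rceil\lesssim 2^{j/2}$.

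The second step converts this symbol estimate into uniform shearlet-type Fourier decay of $\Psi_{j,k}$. Writing $\widehat{\psi}(\eta)=\widehat{\psi^1}(\eta_1)\widehat{\phi^1}(\eta_2)$ and distributing the three summands of $1+|\eta_1|^s+|\eta_2|^s$, the polynomial factors are absorbed by the derivative hypotheses \eqref{eq:decayAssumptions}, since $|\eta_1|^s|\widehat{\psi^1}(\eta_1)|=|\widehat{D^s\psi^1}(\eta_1)|$ and $|\eta_2|^s|\widehat{\phi^1}(\eta_2)|=|\widehat{D^s\phi^1}(\eta_2)|$ retain the decay orders $\alpha_{\mathrm{sh}}$ and $\beta_{\mathrm{sh}}$ instead of degrading them; this is exactly why decay of derivatives rather than of $\phi^1,\psi^1$ alone is assumed, and here order $s\le 2s$ suffices. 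One thus obtains $|\widehat{\Psi_{j,k}}(\eta)|\lesssim \tilde C\,\min\{1,|\eta_1|^{\alpha_{\mathrm{sh}}}\}\min\{1,|\eta_1|^{-\beta_{\mathrm{sh}}}\}\min\{1,|\eta_2|^{-\beta_{\mathrm{sh}}}\}$ uniformly in $j,k$, with $\beta_{\mathrm{sh}}>3$ and $\alpha_{\mathrm{sh}}>\beta_{\mathrm{sh}}$. I would then invoke the $L^2$ Bessel estimate for shearlet systems (the Bessel half of Theorem \ref{theorem:shearletsframe}, equivalently the mechanism underlying Lemma \ref{lemm:0}), noting that its proof uses only the pointwise Fourier decay of the generators and hence applies verbatim to a family $(\Psi_{j,k})$ with generator-independent decay constants, giving a Bessel bound of the claimed form $\lesssim \tilde C/|\det(M_c)|$. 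The vertical cone $\iota=-1$ is treated identically after interchanging $\xi_1,\xi_2$ (using $\widetilde\psi(x_1,x_2)=\psi(x_2,x_1)$ and the derivative decay in the second variable), and the coarse term $\iota=0$ is a Bessel sequence of translates of $\Jcal^s\phi$; summing the three contributions and applying the estimate to $\Jcal^s f$ proves \eqref{eq:theUpperHsEstimate}.

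The main obstacle is thus twofold and concentrated in the middle steps: establishing the uniform symbol bound $\sigma_{j,k}\lesssim 1+|\eta_1|^s+|\eta_2|^s$ (where the interplay of the $2^{-js}$ weight, the parabolic scaling, and the range $|k|\lesssim 2^{j/2}$ of the shearing must be tracked carefully, including for odd $s$ via the elementary inequality $(a+b+c)^{s/2}\lesssim a^{s/2}+b^{s/2}+c^{s/2}$), and verifying that the standard shearlet Bessel bound tolerates $(j,k)$-dependent generators provided their decay is uniform. The remaining frequency-decay bookkeeping, the constant tracking in $\tilde C$, and the coarse and vertical cases are routine.
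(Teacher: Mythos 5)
Your proposal is correct in its essentials, but it follows a genuinely different route from the paper. The paper works in physical space: it expands $\langle f, 2^{-js}\psi_{j,k,m,\iota}\rangle_{H^s}$ as $\sum_{|\mathbf{a}|\le s}\langle D^{\mathbf{a}}f, D^{\mathbf{a}}\psi_{j,k,m,\iota}\rangle_{L^2}$, integrates by parts to put $D^{2\mathbf{a}}$ entirely on the shearlet, and then uses the product/chain rule to write $D^{2\mathbf{a}}\psi_{j,k,m,\iota}=\sum_{\mathbf{r}} c_{\mathbf{r}}^{(\mathbf{a})}\psi^{(\mathbf{r})}_{j,k,m,\iota}$ with $|c_{\mathbf{r}}^{(\mathbf{a})}|\lesssim 2^{2js}$, where $\psi^{(\mathbf{r})}=D^{r_1}\psi^1\otimes D^{r_2}\phi^1$ are \emph{fixed} generators. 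This reduces the claim to finitely many bona fide shearlet systems, to each of which the weighted Bessel estimate of Lemma \ref{lemm:0} is applied as a black box; the interplay between the weight $2^{js}$ and the $H^s$ norm is entirely contained in that cited lemma. You instead work on the frequency side: the Bessel potential turns the problem into an unweighted $L^2$ Bessel bound for a single family $2^{3j/4}\Psi_{j,k}(S_kA_j\cdot-M_cm)$ whose generators depend on $(j,k)$, and your uniform symbol bound $\sigma_{j,k}\lesssim 1+|\eta_1|^s+|\eta_2|^s$ is the frequency-domain counterpart of the paper's coefficient bound $|c_{\mathbf{r}}^{(\mathbf{a})}|\lesssim 2^{2js}$. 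What your route buys: only derivatives up to order $s$ (rather than $2s$) are used, and no weighted Bessel lemma is needed. What it costs: you cannot cite Theorem \ref{theorem:shearletsframe} or Lemma \ref{lemm:0} as stated, since their hypotheses concern fixed generators; you must reopen their proofs and check that they use only the pointwise decay envelope, uniformly over the family. That verification is indeed routine (the covering/Schur-type arguments behind these Bessel bounds never use anything but the envelope), but it is a genuine proof obligation, whereas the paper's decomposition is engineered precisely so that the existing lemma applies verbatim.

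One small point you should make explicit: you identify $\langle f,g\rangle_{H^s}$ with $\langle \mathcal{J}^s f,\mathcal{J}^s g\rangle_{L^2}$, while the paper's proof uses the derivative-sum inner product $\sum_{|\mathbf{a}|\le s}\langle D^{\mathbf{a}}f,D^{\mathbf{a}}g\rangle_{L^2}$. These two inner products are equivalent but not equal, so your identity is not literally available under the paper's convention. This is harmless for two reasons, either of which you should state: the Bessel property is equivalent to boundedness of the synthesis operator, a purely norm-dependent notion, so it transfers between equivalent inner products with controlled constants; alternatively, replace $(1+|\xi|^2)^{s/2}$ by the multiplier $\bigl(\sum_{|\mathbf{a}|\le s}(2\pi\xi)^{2\mathbf{a}}\bigr)^{1/2}$, which matches the derivative-sum inner product exactly and satisfies the same symbol bounds, so your argument goes through unchanged.
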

\begin{proof}
We define for $\mathbf{r}= (r_1,r_2)$: $\phi^{(\mathbf{r})} = D^{r_1}\phi^1 \otimes D^{r_2}\phi^1$, $\psi^{(\mathbf{r})} = D^{r_1}\psi^1\otimes D^{r_2}\phi^1$, and $\tilde{\psi}^{(\mathbf{r})}(x_1,x_2)=\psi^{(\mathbf{r})}(x_2,x_1)$.
By Lemma \ref{lemm:0} there exists a constant $C$, dependent only on $\alpha_{\mathrm{sh}}, \beta_{\mathrm{sh}},$ such that for all $0\leq r_1,r_2 \leq 2s$ and for $(\psi_{j,k,m,\iota}^{(\mathbf{r})})_{(j,k,m,\iota)\in \Lambda} = \mathcal{SH}(\phi^{(\mathbf{r})}, \psi^{(\mathbf{r})}, \tilde{\psi}^{(\mathbf{r})}, c)$ we have that
\begin{align}\label{eq:TheLtwoEstimate}
    \sum_{(j,k,m,\iota)\in \Lambda} 2^{2js}|\langle f, \psi_{j,k,m,\iota}^{(\mathbf{r})}\rangle_{L^2(\R^2)} |^2  \leq C \frac{\tilde{C}}{|\det(M_c)|} \|f\|_{H^s(\R^2)}^2, \text{ for all } f\in H^s(\R^2).
\end{align}
From the assumptions \eqref{eq:decayAssumptions} we have that $\phi, \psi, \tilde{\psi} \in H^{2s}(\R^2)$ and thus we can estimate the left hand side of \eqref{eq:theUpperHsEstimate}. 
\begin{align}
    &\sum_{(j,k,m,\iota) \in \Lambda} | 2^{-js}\left \langle f, \psi_{j,k,m,\iota}\right \rangle_{H^s(\R^2)} |^2 \nonumber\\
    & \qquad = \sum_{(j,k,m,\iota) \in \Lambda} | \sum_{|\mathbf{a}|\leq s} 2^{-js} \left \langle D^{\mathbf{a}}f, D^{\mathbf{a}}\psi_{j,k,m,\iota}\right \rangle_{L^2(\R^2)} |^2\nonumber\\
    & \qquad  \lesssim  \sum_{(j,k,m,\iota) \in \Lambda}  \sum_{|\mathbf{a}|\leq s}|2^{-js} \left \langle f, D^{2\mathbf{a}}\psi_{j,k,m,\iota}\right \rangle_{L^2(\R^2)} |^2 = \mathrm{I}. \label{eq:weNeedToContinueHereAfterAnApplicationOfTheChainRule}
\end{align}
By using the product rule we calculate 
\begin{align}
   D^{2\mathbf{a}}\psi_{j,k,m,\iota}  = \sum_{0\leq r_1,r_2 \leq 2s} c_{\mathbf{r}}^{(\mathbf{a})} \psi^{(\mathbf{r})}_{j,k,m,\iota},\label{eq:chainRuleTrick}
\end{align}
with coefficients $c_{\mathbf{r}}^{\mathbf{a}}$ bounded in absolute value by $2^{2 j s}$.
After applying \eqref{eq:chainRuleTrick} to \eqref{eq:weNeedToContinueHereAfterAnApplicationOfTheChainRule} we estimate 
\begin{align*}
    \mathrm{I} &\lesssim \sum_{(j,k,m,\iota) \in \Lambda}  \sum_{|\mathbf{a}|\leq s} \sum_{0\leq r_1, r_2 \leq 2s} 2^{2js} | \langle f, \psi_{j,k,m,\iota}^{(\mathbf{r})}\rangle_{L^2(\R^2)}|^2\\
    &\lesssim \sup_{0\leq r_1,r_2 \leq 2s}\sum_{(j,k,m,\iota) \in \Lambda}  2^{2js} | \langle f, \psi_{j,k,m,\iota}^{(\mathbf{r})} \rangle_{L^2(\R^2)} |^2 =\mathrm{II}.
\end{align*}
Invoking \eqref{eq:TheLtwoEstimate}, there exists some $B>0$ such that
\begin{align*}
    \mathrm{II} \leq B \frac{\tilde{C}}{|\det(M_c)|} \|f\|_{H^s(\R^2)}^2, \text{ for all } f\in H^s(\R^2).
\end{align*}
This yields the result.
\end{proof}
As a next step we provide the lower frame bound.

\begin{lemma}\label{lemm:lowerBdSobolev}
For $s\in\N$, $\alpha_{\mathrm{sh}} - s> \beta_{\mathrm{sh}} > 4$ and some $\tilde{C}>0$ let $\phi^1, \psi^1 \in L^2(\R)$ be such that for all $0\leq r_1,r_2 \leq 2s$,
\begin{align*}
    \widehat{D^{r_1}\phi^1}(\xi) &\leq  \sqrt{\tilde{C}}\min \{1,|\xi|^{-\beta_{\mathrm{sh}}} \}, \text{ for all } \xi \in \R, \nonumber\\
    \widehat{D^{r_2}\psi^1}(\xi) &\leq \sqrt{\tilde{C}}  \min \{1,|\xi|^{\alpha_{\mathrm{sh}}} \} \min \{1,|\xi|^{-\beta_{\mathrm{sh}}} \}, \text{ for all } \xi \in \R.
\end{align*}
Further, let $\phi, \psi, \psitilde, \theta, \thetatilde, \mu$ be as in \eqref{eq:thenotation}. Assume that there exists $\bar c>0$ such that for all $c_1,c_2\leq \bar{c}$ we have that $\mathcal{SH}(\mu,\theta,\tilde{\theta},(c_1,c_2))$ forms a frame for $L^2( \mathbb R^2)$ with lower frame bound, which can be bounded from below independently of $c_1,c_2$. 
Then there exists $\tilde{c}>0$ such that for all $c_1=c_2 \leq \tilde{c}$ and $c = (c_1,c_2)$ the system $(\psi_{j,k,m,\iota})_{(j,k,m,\iota)\in \Lambda} = \mathcal{SH}(\mu,\theta,\tilde{\theta},c)$, obeys
$$
\|f\|_{H^s(\mathbb{R}^2)}^2\lesssim  \sum_{(j,k,m,\iota)\in \Lambda} |\langle f, 2^{-js} \psi_{j,k,m,\iota} \rangle_{H^s(\R^2)}  |^2,
$$
for all $f\in H^s(\R^2)$.
\end{lemma}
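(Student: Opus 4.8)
The plan is to deduce the lower $H^s$-frame bound for the reweighted system $(2^{-js}\psi_{j,k,m,\iota})_{(j,k,m,\iota)\in\Lambda}$ from the assumed uniform lower $L^2$-frame bound of $\mathcal{SH}(\mu,\theta,\thetatilde,c)$. The bridge is the exact scaling identity
\[
\widehat{\theta_{j,k,m}}(\xi)=2^{-js}\,\xi_1^{s}\,\widehat{\psi_{j,k,m}}(\xi),\qquad \widehat{\thetatilde_{j,k,m}}(\xi)=2^{-js}\,\xi_2^{s}\,\widehat{\psitilde_{j,k,m}}(\xi),\qquad \widehat{\mu_m}=\widehat{\phi_m},
\]
which follows from $\widehat{\theta}=\xi_1^s\widehat{\psi}$ together with the fact that the first coordinate of $(S_kA_j)^{-T}\xi$ equals $2^{-j}\xi_1$, so that the generator weight $\xi_1^s$ produces exactly the factor $2^{-js}$. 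In words, $\theta_{j,k,m}$ is, up to a fixed constant, $2^{-js}\partial_1^s\psi_{j,k,m}$ (and analogously for $\thetatilde$), while the coarse generator $\mu=\phi$ carries no weight.

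Next I would record the norm equivalence $\|f\|_{H^s(\R^2)}^2\sim\|f\|_{L^2}^2+\|\partial_1^sf\|_{L^2}^2+\|\partial_2^sf\|_{L^2}^2$, valid because $(|\xi_1|^2+|\xi_2|^2)^s\sim 1+|\xi_1|^{2s}+|\xi_2|^{2s}$, and the fact that the $H^s$-symbol $w(\xi):=\sum_{|\mathbf a|\le s}\xi^{2\mathbf a}$ is comparable to $\xi_1^{2s}$ on the horizontal frequency cone (since $|\xi_2|\le|\xi_1|$ there, every monomial in $w$ is dominated by $\xi_1^{2s}$), to $\xi_2^{2s}$ on the vertical cone, and to $1$ at low frequencies. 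Using smooth cone cut-offs $\chi_{\mathrm{hor}},\chi_{\mathrm{vert}},\chi_0$ that are $1$ on the (high-frequency) supports of the respective shearlet families, I define cone-localized auxiliary functions by $\widehat{g_1}:=\chi_{\mathrm{hor}}\,w\,\xi_1^{-s}\widehat f$, $\widehat{g_2}:=\chi_{\mathrm{vert}}\,w\,\xi_2^{-s}\widehat f$, and $\widehat{g_0}:=\chi_0\,w\,\widehat f$; the comparabilities above give $\|g_0\|^2+\|g_1\|^2+\|g_2\|^2\sim\|f\|_{H^s(\R^2)}^2$ and, by the multiplier identity,
\[
\langle g_1,\theta_{j,k,m}\rangle_{L^2}=\langle f,2^{-js}\psi_{j,k,m}\rangle_{H^s(\R^2)}
\]
on the frequency support of each horizontal shearlet, with analogous reproductions for $g_2$ against $\thetatilde$ and $g_0$ against $\mu$.

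With these in hand I would apply the uniform lower $L^2$-frame bound $A$ of $\mathcal{SH}(\mu,\theta,\thetatilde,c)$ separately to $g_0,g_1,g_2$. For each $g_\bullet$ the same-cone part of the frame sum equals the corresponding block of the target quantity, so that summing the three inequalities yields
\[
\sum_{(j,k,m,\iota)\in\Lambda}\bigl|\langle f,2^{-js}\psi_{j,k,m,\iota}\rangle_{H^s(\R^2)}\bigr|^2\ \ge\ A\bigl(\|g_0\|^2+\|g_1\|^2+\|g_2\|^2\bigr)-\mathcal R\ \gtrsim\ A\,\|f\|_{H^s(\R^2)}^2-\mathcal R,
\]
where $\mathcal R$ collects all residual contributions: the cross-cone pairings of each $g_\bullet$ against the shearlets oriented along the \emph{other} cones, together with the mismatch created by the cut-offs $\chi_\bullet$ (which are not the true frequency supports, since compactly supported generators are only frequency-decaying).

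The main obstacle is the control of $\mathcal R$. Each residual term pairs a function whose frequency content is concentrated in one cone (or in the complementary low-frequency region) with a shearlet adapted to a transverse cone; the hypotheses $\alpha_{\mathrm{sh}}-s>\beta_{\mathrm{sh}}>4$ and the decay of $\widehat{D^{r}\phi^1},\widehat{D^{r}\psi^1}$ for $r\le 2s$ are exactly what guarantees that $\theta,\thetatilde,\mu$ and the auxiliary functions possess enough frequency decay for these pairings to exhibit strong off-diagonal decay. Summing the resulting estimates bounds $\mathcal R$ by $R\,\|f\|_{H^s(\R^2)}^2$, and the delicate quantitative step is to ensure that $R$ is strictly smaller than the scale- and $c$-uniform lower bound coming from the $L^2$-frame, so that $\mathcal R$ can be absorbed into the left-hand side; this is where the margin in the decay exponents and, if needed, the freedom to shrink the sampling density to $c\le\tilde c$ are used. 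Absorbing $\mathcal R$ then gives $\|f\|_{H^s(\R^2)}^2\lesssim\sum_{(j,k,m,\iota)\in\Lambda}|\langle f,2^{-js}\psi_{j,k,m,\iota}\rangle_{H^s(\R^2)}|^2$, as claimed.
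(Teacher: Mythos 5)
There is a genuine gap in your argument, and it sits exactly where you locate the ``delicate quantitative step'': the residual $\mathcal R$ cannot be absorbed. Your $\mathcal R$ contains, for instance, the cross-cone sum $\sum_{j,k,m}|\langle g_1,\thetatilde_{j,k,m}\rangle_{L^2}|^2$. The only available bound for such a term is a Bessel-type estimate $\lesssim B\,\|g_1\|^2$, and this is sharp: take $f$ with $\widehat f$ concentrated near the seam $|\xi_1|=|\xi_2|$ at high frequency. Then $\|g_1\|^2$ is a fixed fraction of $\|f\|_{H^s(\R^2)}^2$, and the vertical shearlets with boundary shears $|k|\approx 2^{j/2}$ have a constant fraction of their frequency mass inside the horizontal cone (cone-adapted systems \emph{must} overlap at the seam to cover it), so this cross pairing is of the same order as the main term, with a constant comparable to the frame bounds themselves. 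Neither hypothesis you invoke repairs this: the decay exponents $\alpha_{\mathrm{sh}},\beta_{\mathrm{sh}}$ control tails away from the generator's frequency concentration, not the genuine seam overlap; and shrinking the sampling density does not help either, because the lower frame bound $A$ and the cross-cone Bessel constant both scale like $1/|\det M_c|$, so their ratio does not improve as $c\to 0$. The same objection applies to your cut-off mismatch terms: since the generators are compactly supported in space, $\widehat{\psi_{j,k,m}}$ has full frequency support, your claimed reproduction identity $\langle g_1,\theta_{j,k,m}\rangle_{L^2}=\langle f,2^{-js}\psi_{j,k,m}\rangle_{H^s(\R^2)}$ never holds exactly, and the summed error is again a non-small constant times $\|f\|_{H^s(\R^2)}^2$ that is independent of $c$. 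So the inequality you end up with, $\mathrm{target}\geq A\|f\|_{H^s(\R^2)}^2-\mathcal R$ with $\mathcal R$ of order $\|f\|_{H^s(\R^2)}^2$, can be vacuous.

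The paper avoids cone-splitting $f$ altogether, which is precisely what makes its error terms controllable. It applies the $L^2$ frame inequality \emph{once}, to the lifted function with Fourier transform $(1+|\cdot|^2)^{s/2}\widehat f$, so no cross-cone pairings ever appear; all three blocks of the frame sum are kept and each is converted into the corresponding block of the target. The conversion uses two ingredients you do not have: (i) the generators are truncated in frequency to $[-\tfrac{1}{2c_1},\tfrac{1}{2c_1}]^2$, and the truncation error is estimated via the upper-bound lemma (Lemma \ref{lemm:upperBdSobolev}) with a constant proportional to $c_1^{2+2\epsilon}/|\det M_c| = c_1^{2\epsilon}$ --- this error genuinely vanishes as $c_1\to 0$, unlike your cone-cutoff mismatch; (ii) band-limitation makes Parseval's identity for Fourier series applicable, turning the sums over translates $m$ into integrals over a fundamental domain, inside which the weight manipulations $(1+|A_jS_k^T\xi|^2)^{s/2}\,2^{-js}|(A_jS_k^T\xi)_1|^s \leq 2^{-js}(1+|A_jS_k^T\xi|^2)^{s} \lesssim 2^{-js}\sum_{|\mathbf a|\leq s}(2\pi)^{2|\mathbf a|}\xi^{2\mathbf a}$ are performed \emph{pointwise}; reversing Parseval then identifies the result as $\sum|\langle f, 2^{-js}\psi^c_{j,k,m,\iota}\rangle_{H^s(\R^2)}|^2$, and a final triangle inequality plus another $O(c_1^{2\epsilon})$ truncation estimate returns to the untruncated system. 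Your scaling identity $\widehat{\theta_{j,k,m}}=2^{-js}\xi_1^s\widehat{\psi_{j,k,m}}$ is correct and is indeed the same algebraic heart as the paper's argument, but without the band-limitation/Parseval mechanism there is no way to exploit it pointwise, and the cone decomposition you substitute for it creates exactly the non-absorbable residuals described above.
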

\begin{proof}
Let $\bar{c} \geq c>0$. We define
\[ 
\hat{\mu}^c:=\mathcal{X}_{[-\frac{1}{2c_1},\frac{1}{2c_1}]^2}\hat{\mu}, \qquad \hat{\theta}^c:=\mathcal{X}_{[-\frac{1}{2c_1},\frac{1}{2c_1}]^2}\hat{\theta} \qquad, \hat{\thetatilde}^c:=\mathcal{X}_{[-\frac{1}{2c_1},\frac{1}{2c_1}]^2}\hat{\thetatilde}.
\]
By using the fact, that $\mathcal{SH}(\mu,\theta,\tilde{\theta},c)$ is a frame for $L^2( \R^2),$ we deduce
\begin{align}
    \|f\|_{H^s(\mathbb{R}^2)}^2\ &\sim \| (1+|\cdot|^2)^{\frac{s}{2}} \hat{f}\|_{L^2(\R^2)}^2\nonumber\\
    & \lesssim \sum_{(j,k,m,\iota)\in\Lambda} |\langle (1+|\cdot|^2)^{\frac{s}{2}} \hat{f}, \widehat{\theta}_{j,k,m,\iota} \rangle_{L^2(\R^2)}  |^2 \nonumber \\
    &\leq \sum_{(j,k,m,\iota)\in\Lambda} |\langle (1+|\cdot|^2)^{\frac{s}{2}} \hat{f}, \widehat{\theta}^c_{j,k,m} \rangle_{L^2(\R^2)}|^2 \nonumber \\
    &\qquad+ \sum_{(j,k,m,\iota)\in\Lambda} |\langle (1+|\cdot|^2)^{\frac{s}{2}} \hat{f}, \widehat{\theta}_{j,k,m}-\widehat{\theta}^c_{j,k,m} \rangle_{L^2(\R^2)}|^2 = \mathrm{I}_1 + \mathrm{I}_2. \label{eq:TermI2} 
\end{align}
Since $\beta_{\mathrm{sh}} > 4$, there exists $\epsilon >0$ such that $\beta_{\mathrm{sh}} - 1-\epsilon >3$ and for all $\xi \in \R^2$
\begin{align*}
    \min \{1, |\xi|^{\alpha_{\mathrm{sh}}}\} \min \{ 1, |\xi|^{-\beta_{\mathrm{sh}}}\}
    \leq \min \{1, |\xi_1|^{\alpha_{\mathrm{sh}}}\} \min \{ 1, |\xi_1|^{-\beta_{\mathrm{sh}}+1+\epsilon}\} (\max \{ 1, |\xi_1|\})^{-1-\epsilon}.
\end{align*}
Thus, we have that $(\theta_{j,k,m,\iota}- \theta_{j,k,m,\iota}^c)_{(j,k,m,\iota) \in \Lambda}$ satisfies the assumptions of Lemma \ref{lemm:upperBdSobolev} with $\sqrt{\tilde{C}} = 2c_1^{1+\epsilon}$. Hence, $\mathrm{I}_2$ can be estimated by $c_1^{2+2\epsilon}/|\det{(M_c)}| \|f\|_{H^s(\R^2)}^2$ and since $\det{(M_c}) = c_1^2$ this term is negligible for $c_1$ small enough.
Therefore we obtain
\begin{align*}
\|f\|_{H^s(\R^2)}^2 & \lesssim \sum_{(j,k,m,\iota)\in\Lambda} |\langle (1+|\cdot|^2)^{\frac{s}{2}} \hat{f}, \widehat{\theta}^c_{j,k,m} \rangle_{L^2(\R^2)}|^2\\
&= \sum_{(j,k,m)\in\Lambda^{'}} |\langle (1+|\cdot|^2)^{\frac{s}{2}} \hat{f}, \widehat{\theta}^c_{j,k,m} \rangle_{L^2(\R^2)}  |^2\\
    &\qquad+ \sum_{(j,k,m)\in\Lambda^{'}} |\langle (1+|\cdot|^2)^{\frac{s}{2}} \hat{f},\widehat{\thetatilde}^c_{j,k,m} \rangle_{L^2(\R^2)}  |^2 \\
    &\qquad+ \sum_{m\in\Z^2} |\langle (1+|\cdot|^2)^{\frac{s}{2}} \hat{f}, \mathcal{F}(\mu^c(\cdot-c_1m)) \rangle_{L^2(\R^2)}  |^2 \\
&= \sum_{(j,k,m)\in\Lambda^{'}} 2^{\frac{3j}{2}}\bigg\vert \int_{\R^2} \hat{f}(\xi)(1+|\xi|^2)^{\frac{s}{2}} \overline{\Fcal(\theta^c(S_kA_j\cdot-M_cm))(\xi)}d\xi \bigg\vert^2 \\
&\qquad +\sum_{(j,k,m)\in\Lambda^{'}} 2^{\frac{3j}{2}}\bigg\vert \int_{\R^2} \hat{f}(\xi)(1+|\xi|^2)^{\frac{s}{2}} \overline{\Fcal(\thetatilde^c(S_k^T\tilde{A}_j\cdot-M_{\tilde{c}}m))(\xi)}d\xi \bigg\vert^2 \\
&\qquad +\sum_{m\in\Z^2} \bigg\vert\int_{\R^2}\hat{f}(\xi)(1+|\xi|^2)^{\frac{s}{2}}\overline{\Fcal(\mu^c(\cdot-c_1m))(\xi)} d\xi\bigg\vert^2 \\
&= \sum_{(j,k,m)\in\Lambda^{'}} 2^{-\frac{3j}{2}}\bigg\vert \int_{\R^2} \hat{f}(\xi)(1+|\xi|^2)^{\frac{s}{2}} \overline{\hat{\theta}^c(S_k^{-T}A_j^{-1}\xi)}e^{2\pi i\langle M_cS_k^{-T}A_j^{-1}\xi, m \rangle}d\xi \bigg\vert^2 \\
&\qquad +\sum_{(j,k,m)\in\Lambda^{'}} 2^{-\frac{3j}{2}}\bigg\vert \int_{\R^2} \hat{f}(\xi)(1+|\xi|^2)^{\frac{s}{2}} \overline{\hat{\thetatilde}^c(S_k^{-1}\tilde{A}_j^{-1}\xi)} e^{2\pi i\langle M_{\tilde{c}}S_k^{-1}\tilde{A}_{j}^{-1}\xi,m\rangle}d\xi \bigg\vert^2 \\
&\qquad +\sum_{m\in\Z^2} \bigg\vert\int_{\R^2}\hat{f}(\xi)(1+|\xi|^2)^{\frac{s}{2}}\overline{\hat{\mu}^c(\xi)}e^{2\pi i\langle c_1\xi,m \rangle} d\xi\bigg\vert^2 = \mathrm{II}.
\end{align*}

Now we substitute 
\begin{align}\label{eq:transformI}
\xi\leadsto A_jS_k^TM_c^{-1}\xi, \qquad \xi\leadsto \tilde{A}_jS_kM_{\tilde{c}}^{-1}\xi, \qquad \xi\leadsto \frac{\xi}{c_1}.    
\end{align} 
Furthermore, we use that $\mu^c, \theta^c,$ and $\tilde{\theta}^c$ are all supported in $[-\frac{1}{2c_1},\frac{1}{2c_1}]^2.$

\begin{align*}
\mathrm{II}&= \sum_{(j,k,m)\in\Lambda^{'}} \frac{2^{\frac{3j}{2}}}{|\text{det}(M_c)|^2}\cdot \bigg\vert \int_{[-\frac{1}{2},\frac{1}{2}]^2} \hat{f}(A_jS_k^TM_c^{-1}\xi)(1+|A_jS_k^TM_c^{-1}\xi|^2)^{\frac{s}{2}}\overline{\hat{\theta}^c(M_c^{-1}\xi)}e^{2\pi i\langle \xi,m \rangle} d\xi\bigg\vert^{2}  \\
&\qquad+ \sum_{(j,k,m)\in\Lambda^{'}} \frac{2^{\frac{3j}{2}}}{|\text{det}(M_c)|^2}\cdot \bigg\vert \int_{[-\frac{1}{2},\frac{1}{2}]^2} \hat{f}(\tilde{A}_jS_kM_{\tilde{c}}^{-1}\xi)(1+|\tilde{A}_jS_k M_{\tilde{c}}^{-1}\xi|^2)^{\frac{s}{2}}\overline{\hat{\thetatilde}^c(M_{\tilde{c}}^{-1}\xi)}e^{2\pi i\langle \xi,m \rangle} d\xi\bigg\vert^{2}  \\
&\qquad+ \sum_{m\in\Z^2} \frac{1}{c_1^2}\bigg\vert \int_{[-\frac{1}{2},\frac{1}{2}]^2} \hat{f}\left(\frac{\xi}{c_1} \right) \left( 1+\bigg\vert \frac{\xi}{c_1} \bigg\vert^2 \right)^{\frac{s}{2}} \overline{\hat{\mu}^c\left(\frac{\xi}{c_1}\right)}e^{2\pi i\langle\xi,m\rangle}d\xi\bigg\vert^2.
\end{align*}
By using the Parseval identity we obtain
\begin{align*}
\mathrm{II}&= \sum_{(j,k)\in\Lambda^{''}} \frac{2^{\frac{3j}{2}}}{|\text{det}(M_c)|^2}\cdot \left\|  \hat{f}(A_jS_k^TM_c^{-1}\cdot)(1+|A_jS_k^TM_c^{-1}\cdot|^2)^{\frac{s}{2}}\hat{\theta}^c(M_c^{-1}\cdot)\right\|_{L^2(\R^2)}^{2}  \\
&\qquad+ \sum_{(j,k)\in\Lambda^{''}} \frac{2^{\frac{3j}{2}}}{|\text{det}(M_c)|^2}\cdot \left\| \hat{f}(\tilde{A}_jS_kM_{\tilde{c}}^{-1}\cdot)(1+|\tilde{A}_jS_k M_{\tilde{c}}^{-1}\cdot|^2)^{\frac{s}{2}}\hat{\thetatilde}^c(M_{\tilde{c}}^{-1}\cdot)\right\|_{L^2(\R^2)}^{2}  \\
&\qquad+ \frac{1}{c_1^2}\left\| \hat{f}\left(\frac{\cdot}{c_1} \right) \left( 1+\bigg\vert \frac{\cdot}{c_1} \bigg\vert^2 \right)^{\frac{s}{2}} \hat{\mu}^c\left(\frac{\cdot}{c_1}\right)\right\|_{L^2(\R^2)}^2.    
\end{align*}
Now we substitute 
\begin{align}\label{eq:transformII}
\xi\leadsto M_c\xi, \qquad \xi\leadsto M_{\tilde{c}}\xi, \qquad \xi\leadsto c_1\xi
\end{align}
to arrive at
\begin{align*}
\mathrm{II}&= \sum_{(j,k)\in\Lambda^{''}} 2^{\frac{3j}{2}} \left\| (1+|A_jS_k^T\cdot|^2)^{\frac{s}{2}}\hat{f}(A_jS_k^T\cdot) \hat{\theta}^c(\cdot) \right\|_{L^2(\R^2)}^2  \\
&\quad+  \sum_{(j,k)\in\Lambda^{''}}2^{\frac{3j}{2}} \left\| (1+|\tilde{A}_jS_k\cdot|^2)^{\frac{s}{2}} \hat{f}(\tilde{A}_jS_k\cdot) \hat{\thetatilde}^c(\cdot) \right\|_{L^2(\R^2)}^2  \\
&\qquad + \left\| (1+|\cdot|^2)^\frac{s}{2}\cdot\hat{f}\cdot \hat{\phi} \right\|_{L^2(\R^2)}^2.
\end{align*}

Now set 
\[ 
\hat{\phi}^c:=\mathcal{X}_{[-\frac{1}{2c_1},\frac{1}{2c_1}]^2}\hat{\phi}, \qquad \hat{\psi}^c:=\mathcal{X}_{[-\frac{1}{2c_1},\frac{1}{2c_1}]^2}\hat{\psi} \qquad, \hat{\psitilde}^c:=\mathcal{X}_{[-\frac{1}{2c_1},\frac{1}{2c_1}]^2}\hat{\psitilde}.
\]
Using the definition of $\theta^c,\thetatilde^c,\mu^c$ then yields

\begin{align*}
\mathrm{II}&=\sum_{(j,k)\in\Lambda^{''}} 2^{\frac{3j}{2}} \left\| (1+|A_jS_k^T\cdot|^2)^{\frac{s}{2}}\hat{f}(A_jS_k^T\cdot) |(\cdot)_1|^s \hat{\psi}^c(\cdot) \right\|_{L^2(\R^2)}^2  \\
&\quad+  \sum_{(j,k)\in\Lambda^{''}}2^{\frac{3j}{2}} \left\| (1+|\tilde{A}_jS_k\cdot|^2)^{\frac{s}{2}} \hat{f}(\tilde{A}_jS_k\cdot) |(\cdot)_2|^s \hat{\psitilde}^c(\cdot) \right\|_{L^2(\R^2)}^2  \\
&\qquad + \left\| (1+|\cdot|^2)^\frac{s}{2}\cdot\hat{f}\cdot \hat{\phi} \right\|_{L^2(\R^2)}^2 \\    
&= \sum_{(j,k)\in\Lambda^{''}} 2^{\frac{3j}{2}} \left\| (1+|A_jS_k^T\cdot|^2)^{\frac{s}{2}}\hat{f}(A_jS_k^T\cdot) 2^{-js} |(A_jS_k^T\cdot)_1|^s \hat{\psi}^c(\cdot) \right\|_{L^2(\R^2)}^2  \\
&\quad+  \sum_{(j,k)\in\Lambda^{''}}2^{\frac{3j}{2}} \left\| (1+|\tilde{A}_jS_k\cdot|^2)^{\frac{s}{2}} \hat{f}(\tilde{A}_jS_k\cdot) 2^{-js} |(\tilde{A}_jS_k\cdot)_2|^s \hat{\psi}^c(\cdot) \right\|_{L^2(\R^2)}^2  \\
&\qquad + \left\| (1+|\cdot|^2)^\frac{s}{2}\cdot\hat{f}\cdot \hat{\phi}^c \right\|_{L^2(\R^2)}^2 \\
&\leq \sum_{(j,k)\in\Lambda^{''}} 2^{\frac{3j}{2}}\left\| (1+|A_jS_k^T\cdot|^2)^{s}\hat{f}(A_jS_k^T\cdot) 2^{-js}  \hat{\psi}^c(\cdot) \right\|_{L^2(\R^2)}^2  \\
&\quad+  \sum_{(j,k)\in\Lambda^{''}} 2^{\frac{3j}{2}}\left\| (1+|\tilde{A}_jS_k\cdot|^2)^{s}  \hat{f}(\tilde{A}_jS_k\cdot) 2^{-js}  \hat{\psitilde}^c(\cdot) \right\|_{L^2(\R^2)}^2  \\
&\qquad + \left\| (1+|\cdot|^2)^s\cdot\hat{f}\cdot \hat{\phi}^c \right\|_{L^2(\R^2)}^2  = \mathrm{III}.
\end{align*}
We observe that
$$
(1+|\xi|^2)^s \lesssim \sum_{|\mathbf{a}|\leq s} (2\pi)^{2|\mathbf{a}|} \xi^{2\mathbf{a}} \text{ for all } \xi \in \R^2.
$$
Thus we can estimate 
\begin{align*}
    \mathrm{III} \lesssim &\sum_{(j,k)\in\Lambda^{''}}\nonumber 2^{\frac{3j}{2}}\cdot\left\|\sum_{|\mathbf{a}|\leq s} (2\pi)^{2|\mathbf{a}|}  (A_jS_k^T\cdot)^{2\mathbf{a}} \hat{f}(A_jS_k^T\cdot)2^{-js} \hat{\psi}^c(\cdot)\right\|_{L^2(\R^2)}^2\\
&\quad + \sum_{(j,k)\in\Lambda^{''}}\nonumber 2^{\frac{3j}{2}}\cdot\left\|\sum_{|\mathbf{a}|\leq s}  (2\pi)^{2|\mathbf{a}|} (\tilde{A}_jS_k\cdot)^{2\mathbf{a}}\hat{f}(\tilde{A}_jS_k\cdot) 2^{-js} \hat{\psitilde}^c(\cdot)\right\|_{L^2(\R^2)}^2\\
&\qquad + \left\|\sum_{|\mathbf{a}|\leq s}  (2\pi)^{2|\mathbf{a}|}  (\cdot)^{2\mathbf{a}} \hat{f}\hat{\phi}^c \right\|_{L^2(\R^2)}^2 = \mathrm{IV}.
\end{align*}

Invoking Parseval's identity again and reversing the transformations \eqref{eq:transformII} as well as \eqref{eq:transformI} from earlier shows that
\begin{align*}
\mathrm{IV} = &\sum_{(j,k,m,\iota)\in \Lambda} |\langle \hat{f}, \sum_{|\mathbf{a}|\leq s} (2\pi)^{2|\mathbf{a}|}  (\cdot)^{2\mathbf{a}}  2^{-js}\widehat{\psi}^c_{j,k,m,\iota} \rangle_{L^2(\R^2)}  |^2 \\
= &\sum_{(j,k,m,\iota)\in\Lambda} |\sum_{|\mathbf{a}|\leq s} \langle \hat{f},  (-1)^{|\mathbf{a}|} (2\pi i)^{2|\mathbf{a}|}  (\cdot)^{2\mathbf{a}} 2^{-js}\widehat{\psi}^c_{j,k,m,\iota} \rangle_{L^2(\R^2)}  |^2.
\end{align*}
By standard results on derivatives and the Fourier transform we have that 
$$
(2\pi i \cdot)^{2\mathbf{a}} 2^{-js}\widehat{\psi}^c_{j,k,m,\iota} = \mathcal{F}(2^{-js} D^{2\mathbf{a}}  \psi^c_{j,k,m,\iota}).
$$
Thus we can invoke the Plancherel identity and partial integration to obtain 
\begin{align*}
    \mathrm{IV}= &\sum_{(j,k,m,\iota)\in\Lambda} |\sum_{|\mathbf{a}|\leq s} \langle f,  (-1)^{|\mathbf{a}|} 2^{-js}D^{2\mathbf{a}} \psi^c_{j,k,m,\iota} \rangle_{L^2(\R^2)}  |^2\\
    = &\sum_{(j,k,m,\iota)\in\Lambda} |\sum_{|\mathbf{a}|\leq s} \langle D^{\mathbf{a}} f,  2^{-js}D^{\mathbf{a}} \psi^c_{j,k,m,\iota} \rangle_{L^2(\R^2)}  |^2\\
    = &\sum_{(j,k,m,\iota)\in\Lambda} | \langle  f,  2^{-js} \psi^c_{j,k,m,\iota} \rangle_{H^s(\R^2)}  |^2.
\end{align*}

We still need to transition back from $\psi^c_{j,k,m,\iota}$ to $\psi_{j,k,m,\iota}$. We proceed by invoking a triangle inequality
\begin{align*}
    \mathrm{IV} \lesssim & \ \sum_{(j,k,m,\iota)\in\Lambda} | \langle  f,  2^{-js} \psi_{j,k,m,\iota} \rangle_{H^s(\R^2)}  |^2 \\
    &\quad + \sum_{(j,k,m,\iota)\in\Lambda} | \langle  f,  2^{-js} (\psi_{j,k,m,\iota} - \psi^c_{j,k,m,\iota}) \rangle_{H^s(\R^2)}  |^2\\
    =  & \ \mathrm{IV}_1 +   \mathrm{IV}_2,
\end{align*}
Using a similar estimate as for $\mathrm{I}_2$ in \eqref{eq:TermI2} we can estimate by invoking Lemma \ref{lemm:upperBdSobolev} that $\mathrm{IV}_2$ is negligible for $c$ small enough. This yields the result.
\end{proof}

To conclude this subsection we would like to examine how the auxiliary functions $\theta,\thetatilde$ and $\mu$ can be chosen such that they fulfill the frame property required in the proof of Theorem \ref{Thm:HsR2Frame}. Therefore assume $\psi, \psitilde,$ and $\phi$ fulfill the assumptions of Theorem \ref{Thm:HsR2Frame} with $\gamma+4> \alpha_{\mathrm{sh}}> \gamma>4+s$.
Then it follows that 
\begin{align*}
    \theta(x) &= \frac{1}{(-2\pi i)^s} D^{s}(\psi^1)(x_1) \phi^1(x_2),\\
    \thetatilde(x) &= \phi^1(x_1) \frac{1}{(-2\pi i)^s} D^{s}(\psi^1)(x_2),\\ 
    \mu &= \phi^1(x_1) \phi^1(x_2).
\end{align*}
Furthermore $D^{s}(\psi^1)$ and $\phi^1$ satisfy the assumptions of Theorem \ref{Thm:HsR2Frame} and therefore there exists a sampling parameter $\bar{c}>0$ such that for $c_1=c_2 \leq \bar c$ and $c = (c_1, c_2)$ the system $\mathcal{SH}( \phi, \psi, \psitilde, c)$ constitutes a frame for $L^2(\R^2)$. 
\section{Localization of shearlet and wavelet frames}\label{subsec:loc_sw}
%------------------------------------------------------------------------------------------------------------------------------

We now turn to the proof of Proposition \ref{prop:CrossDecaySum}. For this, we will require the following technical lemma.

 \begin{lemma}\cite{DissPP}\label{lem:ShearFreqEnv}
Let $\psi \in L^2(\R^2)$ be such that there exists $C>0$ with
\begin{align*}
|\psihat(\xi_1,\xi_2)| \leq C \frac{\min\{1,|\xi_1|^{ \alpha }\}}{\max\{1, |\xi_1|^{ \beta} \} \max\{1, |\xi_2|^{ \beta} \}}, \quad \text{for a.e. } (\xi_1, \xi_2) \in \R^2,
\end{align*}
where $\beta/2 >\alpha >1$. Then, for $\iota =-1,1$,
%\begin{align*}
%|(\psi_{j,k,m,\iota})^\wedge(\xi_1,\xi_2)| \leq 2^{-3/4j} C' %\frac{1}{\max\{1,|2^{-j}\xi_1|^{\beta/2}\}}\frac{1}{\max\{1,|2^{-j}\xi_2|^{\beta/2}\}}, \quad %\text{for a.e. } (\xi_1, \xi_2) \in \R^2,
%\end{align*}
\begin{align*}
\sum_{|k| \leq 2^{j/2}}|(\psi_{j,k,m,\iota})^\wedge(\xi_1,\xi_2)| \leq 2^{-3/4j} C' \frac{1}{\max\{1,|2^{-j}\xi_1|^{\beta/2}\}}\frac{1}{\max\{1,|2^{-j}\xi_2|^{\beta/2}\}},
\end{align*}
for a.e. $(\xi_1, \xi_2) \in \R^2$ and a constant $C'$.
\end{lemma}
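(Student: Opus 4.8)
The plan is to reduce the two-dimensional claim to a one-dimensional summation estimate in the shear index $k$, after extracting the modulus of the Fourier transform of each sheared element. First I would record that, for $\iota = 1$ and $\psi_{j,k,m} = 2^{3j/4}\psi(S_kA_j\cdot - M_c m)$, the substitution $y = S_kA_j x - M_c m$ in the Fourier integral, together with $|\det(S_kA_j)| = 2^{3j/2}$, yields
\[
|\widehat{\psi_{j,k,m}}(\xi)| = 2^{-3j/4}\,|\psihat(S_k^{-T}A_j^{-1}\xi)|,
\]
the translation by $M_c m$ contributing only a unimodular factor, so the modulus is independent of $m$. Since $A_j^{-1} = \diag(2^{-j},2^{-j/2})$ and $S_k^{-T}$ has rows $(1,0)$ and $(-k,1)$, the two entries of $S_k^{-T}A_j^{-1}\xi$ are $\eta_1 := 2^{-j}\xi_1$ (independent of $k$) and $\eta_2 - k\eta_1$, where $\eta_2 := 2^{-j/2}\xi_2$. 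Inserting the hypothesised decay of $\psihat$ and factoring out the $k$-independent part, the problem becomes to bound
\[
\sum_{|k|\le 2^{j/2}}|\widehat{\psi_{j,k,m}}(\xi)| \le 2^{-3j/4}\,C\,\frac{\min\{1,|\eta_1|^\alpha\}}{\max\{1,|\eta_1|^\beta\}}\,\Sigma, \qquad \Sigma := \sum_{|k|\le 2^{j/2}}\frac{1}{\max\{1,|\eta_2 - k\eta_1|^\beta\}},
\]
and to show the prefactor times $\Sigma$ is dominated by $\max\{1,|\eta_1|^{\beta/2}\}^{-1}\max\{1,|2^{-j}\xi_2|^{\beta/2}\}^{-1}$, noting $2^{-j}\xi_2 = 2^{-j/2}\eta_2$.

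The crux is the estimate of $\Sigma$, which I would obtain by separating the indices with $|\eta_2 - k\eta_1|\le 1$ (the plateau of the weight) from the rest. The plateau set $\{k:\ |k-\eta_2/\eta_1|\le 1/|\eta_1|\}$ contains at most $1 + 2/|\eta_1|$ integers, each contributing at most $1$; along the remaining indices the arguments form an arithmetic progression of spacing $|\eta_1|$ on which $t\mapsto|t|^{-\beta}$ is monotone, so their total is controlled by comparison with $\int|t|^{-\beta}\,dt$, convergent because $\beta>2$. The decisive feature is \emph{whether the centre $k_\ast = \eta_2/\eta_1$ lies inside the summation window $|k|\le 2^{j/2}$}, which is precisely the horizontal-cone condition $|\xi_2|\le|\xi_1|$. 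Inside the cone the plateau is nonempty and $\Sigma\lesssim 1 + 1/|\eta_1|$; outside it every argument obeys $|\eta_2 - k\eta_1|\ge |\eta_2| - 2^{j/2}|\eta_1|$, so $\Sigma$ is governed by its few largest terms and decays in $\eta_2$.

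Finally I would multiply the prefactor by each regime of the $\Sigma$-bound. The factor $\max\{1,|\eta_1|^\beta\}^{-1}$ carries full decay $\beta$ in $\eta_1$: I would spend half of it, $\beta/2$, on the first target factor and reinvest the other half, together with the out-of-cone lower bound on $|\eta_2-k\eta_1|$, to build the second factor $\max\{1,|2^{-j}\xi_2|^{\beta/2}\}^{-1}$ — this is exactly where summing over $\sim 2^{j/2}$ shears converts anisotropic decay into the isotropic $\beta/2$-envelope. In the regime $|\eta_1|\le 1$ the factor $\min\{1,|\eta_1|^\alpha\} = |\eta_1|^\alpha$ absorbs the plateau count $1/|\eta_1|$, which forces $\alpha>1$. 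The case $\iota=-1$ then follows verbatim after interchanging the two coordinates, since $\psitilde(x_1,x_2)=\psi(x_2,x_1)$ and $\psitilde_{j,k,m}$ is built from $S_k^T\widetilde{A}_j$.

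I expect the genuine difficulty to lie in this last matching step: correctly exploiting the restriction $|k|\le 2^{j/2}$ through the cone dichotomy and splitting the available decay so that \emph{both} output exponents come out equal to $\beta/2$. The delicate bookkeeping occurs in the small-$|\eta_1|$ regime, where the plateau count grows like $1/|\eta_1|$ and must be beaten by $\min\{1,|\eta_1|^\alpha\}$; this is where the hypotheses $\beta/2>\alpha>1$ are consumed and where all constants must be tracked to land exactly on the stated envelope with a finite $C'$.
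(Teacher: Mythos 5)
The first thing to say is that the paper contains no argument for this lemma at all: its entire ``proof'' is the sentence ``See \cite[Lemma 3.2.1]{DissPP} for a proof,'' so there is no in-paper route to compare yours against, and your proposal has to stand on its own. On its own terms, the skeleton is sound and assembles the right ingredients: the identity $|\widehat{\psi_{j,k,m}}(\xi)| = 2^{-3j/4}\,|\widehat{\psi}(\eta_1,\,\eta_2-k\eta_1)|$ with $\eta_1=2^{-j}\xi_1$, $\eta_2=2^{-j/2}\xi_2$; the reduction to the shear sum $\Sigma$; the plateau/tail splitting giving $\Sigma\lesssim 1+1/|\eta_1|$ (together with the trivial count $\Sigma\lesssim 2^{j/2}$); the absorption of the plateau count $1/|\eta_1|$ by $\min\{1,|\eta_1|^{\alpha}\}$, which is exactly where $\alpha>1$ enters; and the coordinate-swap argument for $\iota=-1$.

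There is, however, one step that is false as written: your out-of-cone claim. You place the dichotomy exactly at $|k_\ast|=|\eta_2/\eta_1|\le 2^{j/2}$ and assert that outside the cone ``every argument obeys $|\eta_2-k\eta_1|\ge|\eta_2|-2^{j/2}|\eta_1|$, so $\Sigma$ is governed by its few largest terms and decays in $\eta_2$.'' In the transition band $2^{j/2}|\eta_1|<|\eta_2|\le 2\cdot 2^{j/2}|\eta_1|$ this lower bound can be arbitrarily close to $0$ and the plateau is still hit: take $|\eta_1|$ small and $\eta_2=2^{j/2}\eta_1+\delta$ with $0<\delta\le 1/2$; then roughly $1/|\eta_1|$ of the admissible shears $k$ near $2^{j/2}$ satisfy $|\eta_2-k\eta_1|\le 1$, so $\Sigma\sim 1/|\eta_1|$ and exhibits no decay in $\eta_2$ whatsoever. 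The repair is routine but must be made explicit: in that band one has $\max\{1,|2^{-j}\xi_2|^{\beta/2}\}\le 2^{\beta/2}\max\{1,|\eta_1|^{\beta/2}\}$, so the target envelope is bounded below by a constant times $\max\{1,|\eta_1|^{\beta}\}^{-1}$ and the ``inside'' argument ($\min\{1,|\eta_1|^{\alpha}\}\,\Sigma\lesssim 1$, via $\alpha>1$) applies verbatim. The genuine exterior regime should therefore be defined by $|\eta_2|>2\cdot 2^{j/2}|\eta_1|$; there every admissible $k$ gives $|\eta_2-k\eta_1|\ge |\eta_2|/2$, and the cases $|\eta_2|\le 2$ and $|\eta_2|>2$ close with, respectively, the plateau-count bound and the integral-comparison bound $\Sigma\lesssim (|\eta_2|/2)^{-\beta}+(|\eta_2|/2)^{1-\beta}/((\beta-1)|\eta_1|)$. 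With this three-way split the constants work out and the stated envelope follows; it is also worth noting that this route only consumes $\alpha\ge 1$ and $\beta\ge 2$, i.e.\ the hypothesis $\beta/2>\alpha$ is never actually used, so your argument proves a slightly stronger statement than the one quoted.
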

\begin{proof}
See \cite[Lemma 3.2.1]{DissPP} for a proof.
\end{proof}

Now we are ready to prove Proposition \ref{prop:CrossDecaySum}.

\begin{proof}[Proof of Proposition \ref{prop:CrossDecaySum}]
First of all, by the definition of $\Theta_{\tau, t}^c$ one has that for all
$(j_{\mathrm{sh}},k,m_{\mathrm{sh}},\iota) \in \Lambda_0^c$ and $(j_{\mathrm{w}},m_{\mathrm{w}},\upsilon) \in \Theta_{\tau, t}^c$ such that $j_{\mathrm{w}} < 1/(2\tau) j_{\mathrm{sh}} + t$ we have that $\suppp (2^{-j_{\mathrm{w}}s}\omega_{j_{\mathrm{w}},m_{\mathrm{w}},\upsilon})^d \cap \suppp \psi_{j_{\mathrm{sh}},k,m_{\mathrm{sh}},\iota} = \emptyset$. Hence, we can assume in the sequel that $j_{\mathrm{w}} > 1/(2\tau) j_{\mathrm{sh}} + t$. 

Furthermore, the total number of wavelet translates for a fixed level $j_{\mathrm{w}}$ is of order $2^{2j_{\mathrm{w}}}$. W.l.o.g., the following computations can be done for $\upsilon=1.$ Using this observation and Plancherel's identity, we obtain 
\begin{align}
 & \ \sum_{(j_{\mathrm{w}},m_{\mathrm{w}},1)\in \Theta_{\tau, t }^c} \sum_{(j_{\mathrm{sh}},k,m_{\mathrm{sh}},\iota) \in \Lambda_0^c}  |\langle (2^{-j_{\mathrm{w}}s}\omega_{j_{\mathrm{w}},m_{\mathrm{w}},{1}})^d,  2^{-j_{\mathrm{sh}} s}\psi_{j_{\mathrm{sh}},k,m_{\mathrm{sh}},\iota} \rangle_{H^s(\Omega)}|^2 \nonumber\\
\lesssim & \ \sum_{j_{\mathrm{w}} = 0}^{\infty}\sum_{j_{\mathrm{sh}} =0}^{(2\tau)(j_{\mathrm{w}}-t)} \sum_{|k| \leq 2^{j_{\mathrm{sh}}/2} } 2^{2j_{\mathrm{w}}}\max_{m_{\mathrm{sh}}, m_{\mathrm{w}}}( \sum_{|\mathbf{a}| \leq s}|\langle (\cdot)^\mathbf{a} \widehat{(2^{-j_{\mathrm{w}} s}\omega_{j_{\mathrm{w}},m_{\mathrm{w}},{1}})^d}, (\cdot)^\mathbf{a} 2^{-j_{\mathrm{sh}} s}\widehat{\psi_{j_{\mathrm{sh}},k,m_{\mathrm{sh}},\iota}} \rangle_{L^2(\R^2)}|^2)\nonumber\\
\lesssim & \ \sum_{|\mathbf{a}| \leq s}\sum_{j_{\mathrm{w}} = 0}^{\infty}\sum_{j_{\mathrm{sh}} =0}^{(2\tau)(j_{\mathrm{w}}-t)} \sum_{|k| \leq 2^{j_{\mathrm{sh}}/2} } 2^{2j_{\mathrm{w}}}\max_{m_{\mathrm{sh}}, m_{\mathrm{w}}}(|\langle (\cdot)^\mathbf{a} \widehat{(2^{-j_{\mathrm{w}} s}\omega_{j_{\mathrm{w}},m_{\mathrm{w}},{1}})^d}, (\cdot)^\mathbf{a} 2^{-j_{\mathrm{sh}} s}\widehat{\psi_{j_{\mathrm{sh}},k,m_{\mathrm{sh}},\iota}} \rangle_{L^2(\R^2)}|^2). \label{eq:someEstimate123} 
\end{align}
Leveraging on the frequency decay of the corresponding shearlet atoms, applying Lemma \ref{lem:ShearFreqEnv}, and using (W3) of the Assumption \ref{ass:Wave} yields for any $|\mathbf{a}| \leq s$
\begin{align*}
\sum_{j_{\mathrm{sh}} = 0}^{(2\tau)(j_{\mathrm{w}}-t)} &  \sum_{{|k|}\leq 2^{j_{\mathrm{sh}}/2}} 2^{2j_{\mathrm{w}}}\max_{m_{\mathrm{w}},m_{\mathrm{sh}}} |\langle (\cdot)^\mathbf{a} \widehat{(2^{-j_{\mathrm{w}} s}\omega_{j_{\mathrm{w}},m_{\mathrm{w}},{1}})^d}, (\cdot)^\mathbf{a} 2^{-j_{\mathrm{sh}} s} \widehat{\psi_{j_{\mathrm{sh}},k,m_{\mathrm{sh}},\iota}} \rangle_{L^2(\R^2)}|^2\\
& \lesssim \sum_{j_{\mathrm{sh}} = 0}^{(2\tau)(j_{\mathrm{w}}-t)} 2^{- 3/2j_{\mathrm{sh}}}  \left(\int_{\R^2}  \frac{2^{-j_{\mathrm{w}} s}\xi^\mathbf{a} \min\{1, |2^{-j_{\mathrm{w}}} \xi_1|^{\alpha_{\mathrm{w}}}\}}{\max\{1, |2^{-j_{\mathrm{w}}}\xi_1|^{\beta_{\mathrm{w}}}\}\max\{1, |2^{-j_{\mathrm{w}}}\xi_2|^{\beta_{\mathrm{w}}}\}} \right. \\
& \quad \cdot \left. \frac{ 2^{-j_{\mathrm{sh}} s}\xi^\mathbf{a}}{\max\{1, |2^{- j_{\mathrm{sh}}}\xi_1|^{\beta_{\mathrm{sh}}/2}\}\max\{1, |2^{-j_{\mathrm{sh}}}\xi_2|^{\beta_{\mathrm{sh}}/2}\}} \, d\xi \right)^2  =: \mathrm{I}.
\end{align*}
By a simple computation we obtain that if $\beta_{\mathrm{w}} \geq s$
\begin{align*}
\frac{2^{-j_{\mathrm{w}} s}\xi^\mathbf{a} }{\max\{1, |2^{-j_{\mathrm{w}}}\xi_1|^{\beta_{\mathrm{w}}}\max\{1, |2^{-j_{\mathrm{w}}}\xi_2|^{\beta_{\mathrm{w}}-s}\} }\lesssim \frac{1}{\max\{1, |2^{-j_{\mathrm{w}}}\xi_1|^{\beta_{\mathrm{w}}-s}\} \max\{1, |2^{-j_{\mathrm{w}}}\xi_2|^{\beta_{\mathrm{w}}-s}\} }.
\end{align*}
We plug this estimate into the estimate above and obtain with $\beta_{\mathrm{w}}'=\beta_{\mathrm{w}}-s$ and $\beta_{\mathrm{sh}}'=\beta_{\mathrm{sh}}/2-s$:
\begin{align*}
\mathrm{I} & \lesssim \sum_{j_{\mathrm{sh}} = 0}^{(2\tau)(j_{\mathrm{w}}-t)} 2^{- 3/2j_{\mathrm{sh}}}  \left(\int_{\R^2}  \frac{ \min\{1, |2^{-j_{\mathrm{w}}} \xi_1|^{\alpha_{\mathrm{w}}}\}}{\max\{1, |2^{-j_{\mathrm{w}}}\xi_1|^{\beta_{\mathrm{w}}'}\}\max\{1, |2^{-j_{\mathrm{w}}}\xi_2|^{\beta_{\mathrm{w}}'}\}} \right. \\
& \quad \cdot \left. \frac{1}{\max\{1, |2^{- j_{\mathrm{sh}}}\xi_1|^{\beta_{\mathrm{sh}}'}\}\max\{1, |2^{-j_{\mathrm{sh}}}\xi_2|^{\beta_{\mathrm{sh}}'}\}} \, d\xi \right)^2 = \mathrm{II}. 
\end{align*}
We continue by applying the substitution $\xi \mapsto 2^{j_{\mathrm{sh}}} \xi$
\begin{align*}
\mathrm{II} &\lesssim \sum_{j_{\mathrm{sh}} = 0}^{(2\tau)(j_{\mathrm{w}}-t)} 2^{5/2j_{\mathrm{sh}}}  \left(\int_{\R^2}  \frac{\min\{1, | 2^{j_{\mathrm{sh}} - j_{\mathrm{w}}}\xi_1|^{\alpha_{\mathrm{w}}}\}}{\max\{1, |2^{j_{\mathrm{sh}} - j_{\mathrm{w}}} \xi_1|^{\beta_{\mathrm{w}}'}\}\max\{1, |2^{j_{\mathrm{sh}} - j_{\mathrm{w}}}\xi_2|^{\beta_{\mathrm{w}}'}\}} \right. \\
& \quad \cdot \left. \frac{1}{\max\{1, |\xi_1|^{\beta_{\mathrm{sh}}'}\}\max\{1, |\xi_2|^{\beta_{\mathrm{sh}}'}\}} \, d\xi \right)^2. \\
\lesssim &\sum_{j_{\mathrm{sh}} = 0}^{\infty} 2^{5/2j_{\mathrm{sh}}}   \left(\int_{\R^2}  \frac{\min\{1, |2^{j_{\mathrm{sh}} - j_{\mathrm{w}}} \xi_1|^{\alpha_{\mathrm{w}}}\}}{\max\{1, |\xi_1|^{\beta_{\mathrm{sh}}'}\}\max\{1, |\xi_2|^{\beta_{\mathrm{sh}}'}\}} \, d\xi \right)^2 \\
\lesssim &\sum_{j_{\mathrm{sh}} = 0}^{\infty} 2^{5/2j_{\mathrm{sh}} + 2\alpha_{\mathrm{w}}(j_{\mathrm{sh}}-j_{\mathrm{w}})}   \left(\int_{\R^2}  \frac{ |\xi_1|^{\alpha_{\mathrm{w}}} }{\max\{1, |\xi_1|^{\beta_{\mathrm{sh}}'}\}\max\{1, |\xi_2|^{\beta_{\mathrm{sh}}'}\}} \, d\xi \right)^2.
\end{align*}
Since  $\beta_{\mathrm{sh}}'-\alpha_{\mathrm{w}}>1$ we obtain that the integral above is finite and hence we conclude
\begin{align}
\mathrm{I} \lesssim \sum_{j_{\mathrm{sh}} = 0}^{(2\tau)(j_{\mathrm{w}}-t)} 2^{5/2j_{\mathrm{sh}} + 2\alpha_{\mathrm{w}}(j_{\mathrm{sh}}-j_{\mathrm{w}})}.\label{eq:theOtherThat}
\end{align}
We rewrite the last sum above as
\begin{align}
\sum_{j_{\mathrm{sh}} = 0}^{(2\tau)(j_{\mathrm{w}}-t)} 2^{5/2j_{\mathrm{sh}} + 2\alpha_{\mathrm{w}}(j_{\mathrm{sh}}-j_{\mathrm{w}})} = 2^{-2 \alpha_{\mathrm{w}} \epsilon j_{\mathrm{w}} }\sum_{j_{\mathrm{sh}} = 0}^{(2\tau)(j_{\mathrm{w}}-t)} 2^{5/2j_{\mathrm{sh}} + 2\alpha_{\mathrm{w}}(j_{\mathrm{sh}}-(1-\epsilon)j_{\mathrm{w}})}.\label{eq:that}
\end{align}
Since $j_{\mathrm{w}}>1/(2\tau)j_{\mathrm{sh}} + t$, we can now estimate
\begin{align*}
\sum_{j_{\mathrm{sh}} = 0}^{\infty} 2^{5/2j_{\mathrm{sh}} + 2\alpha_{\mathrm{w}}(j_{\mathrm{sh}}-(1-\epsilon)j_{\mathrm{w}})} \lesssim 2^{-2 \alpha_{\mathrm{w}} (1-\epsilon)t} \sum_{j_{\mathrm{sh}}=0}^\infty  2^{5/2j_{\mathrm{sh}} + 2\alpha_{\mathrm{w}}(j_{\mathrm{sh}}-(1-\epsilon) (1/(2\tau)j_{\mathrm{sh}}) }.
\end{align*}
Since $\alpha_{\mathrm{w}}((1-\eps)/\tau-2) > 5/2$ by assumption, the latter sum is finite. This leads to the estimate
\begin{align}
\sum_{j_{\mathrm{sh}} = 0}^{\infty} 2^{5/2j_{\mathrm{sh}} + 2\alpha_{\mathrm{w}}(j_{\mathrm{sh}}-(1-\epsilon)j_{\mathrm{w}})}
\lesssim 2^{-2 \alpha_{\mathrm{w}} (1-\epsilon)t}. \label{eq:ThisInCombinationWithThat}
\end{align}
Now \eqref{eq:ThisInCombinationWithThat} in combination with \eqref{eq:that} and \eqref{eq:theOtherThat} implies together with \eqref{eq:someEstimate123} that
\begin{align*}
&\sum_{(j_{\mathrm{sh}},k,m,\iota) \in \Lambda_0^c} \sum_{(j_{\mathrm{w}},m',1)\in \Theta_{\tau, t }^c}  |\langle ({2^{-j_{\mathrm{w}} s}\omega_{j_{\mathrm{w}},m',\upsilon}})^d, 2^{-j_{\mathrm{sh}} s}\psi_{j_{\mathrm{sh}},k,m,\iota} \rangle_{H^s(\Omega)}|^2 \\
&\lesssim \sum_{j_{\mathrm{w}} = 0}^\infty 2^{-2 \alpha_{\mathrm{w}} \epsilon j_{\mathrm{w}} } 2^{-2 \alpha_{\mathrm{w}} (1-\epsilon)t}\lesssim 2^{-2 \alpha_{\mathrm{w}} (1-\epsilon)t}.
\end{align*}
\end{proof}

\end{document}